\let\oldmarginpar\marginpar
\renewcommand\marginpar[1]{\-\oldmarginpar[\raggedleft\footnotesize #1]%
	{\raggedright\footnotesize #1}}
\definecolor{darkgreen}{rgb}{0,.647, .317}
\definecolor{darkorange}{rgb}{.964,.404, .136}
\definecolor{npink}{cmy}{.10,.80,0}
\DeclareMathOperator{\tb}{tb}
\DeclareMathOperator{\rot}{rot}
\DeclareMathOperator{\lk}{lk}
\def\dfn#1{{\textbf{#1}}}
\newcommand{\R}{\mathbb{R}}
\newcommand{\Z}{\mathbb{Z}}
\newcommand{\Q}{\mathbb{Q}}
\newcommand{\N}{\mathbb{N}}
\newcommand{\Op}{{\mathcal{O}} p}
\newcommand{\xist}{\xi_{\mathrm{st}}}
\newcommand{{\,\,\input{PushOff.pdf_tex}\,\,}}{{\,\,\input{PushOff.pdf_tex}\,\,}} % Definition Legendrian Push-Off; VORSICHT: Funktioniert nicht in Bildunterschriften
\theoremstyle{plain}
\newtheorem{Theorem}{Theorem}[section]
\newtheorem{thm}[Theorem]{Theorem}
\newtheorem{lem}[Theorem]{Lemma}
\newtheorem{prop}[Theorem]{Proposition}
\newtheorem{cor}[Theorem]{Corollary}
\newtheorem*{Theorem-ohne}{Theorem}
\newtheorem{con}[Theorem]{Conjecture}
\newtheorem{ques}[Theorem]{Question}
\theoremstyle{definition}
\newtheorem{defi}[Theorem]{Definition}
\newtheorem{rem}[Theorem]{Remark}
\newtheorem{ex}[Theorem]{Example}
\begin{document}

%%%%%%%%%%%%%%%%%%%%%%%%%%%%% Titel und Author %%%%%%%%%%%%%%%%%%%%%%%%%%%%%%%%%%%%

\title[Stein traces and characterizing slopes]{Stein traces and characterizing slopes}

\author{Roger Casals}
\address{University of California Davis, Dept. of Mathematics, Shields Avenue, Davis, CA 95616, USA}
\email{casals@math.ucdavis.edu}

\author{John Etnyre}
\address{School of Mathematics, Georgia Institute
	of Technology,  Atlanta, GA}
\email{etnyre@math.gatech.edu}

\author{Marc Kegel}
\address{Humboldt-Universit\"at zu Berlin, Rudower Chaussee 25, 12489 Berlin, Germany.}
\email{kegemarc@math.hu-berlin.de, kegelmarc87@gmail.com}

\date{\today}

%%%%%%%%%%%%%%%%%%%%%%%%%%%%% Abstract %%%%%%%%%%%%%%%%%%%%%%%%%%%%%%%%%%%%

\begin{abstract}
We show that there exists an infinite family of pairwise non-isotopic Legendrian knots in the standard contact $3$-sphere whose Stein traces are equivalent. This is the first example of such phenomenon. Different constructions are developed in the article, including a contact annulus twist, explicit Weinstein handlebody equivalences, and a discussion on dualizable patterns in the contact setting. These constructions can be used to systematically construct distinct Legendrian knots in the standard contact $3$-sphere with contactomorphic $(-1)$-surgeries and, in many cases, equivalent Stein traces. In addition, we also discuss characterizing slopes and provide results in the opposite direction, i.e. describe cases in which the Stein trace, or the contactomorphism type of an $r$-surgery, uniquely determines the Legendrian isotopy type.
\end{abstract}

\makeatletter
\@namedef{subjclassname@2020}{%
  \textup{2020} Mathematics Subject Classification}
\makeatother%For 2020

\makeatletter
\def\blfootnote{\xdef\@thefnmark{}\@footnotetext}
\makeatother

\subjclass[2020]{53D35; 53D10, 57K10, 57R65, 57K10, 57K33} % Mathematical subject classification

\keywords{Characterizing slopes, contact Dehn surgery, Legendrian knots, Stein manifolds, Weinstein handles}

\maketitle

%%%%%%%%%%%%%%%%%%%%%%%%%%%%%%%%%%%%%%%%%%%%%%%%%%%%%%%%%%%%%%%%%%%%%%%%%%%%%%%%%%%%%%%%%%%%%%%%%%%%%%%%%%%%%%%%%%%%%%%%%%%%%%%%%%%%%%%%%%%%%%%%%%%%%%%%%%%%%%%%%%%%%%%%
%%%%%%%%%%%%%%%%%%%%%%%%%%%%%%%%%%%%%%%%%%%%%%%%%%%%%%%%%%%%%%%%%%%%%%%%%%%%%%%%%%%%%%%%%%%%%%%%%%%%%%%%%%%%%%%%%%%%%%%%%%%%%%%%%%%%%%%%%%%%%%%%%%%%%%%%%%%%%%%%%%%%%%%%

\section{Introduction}

The existence of distinct Legendrian knots in $(S^3,\xi_{st})$ with equivalent Stein traces, or even just contactomorphic $(-1)$-surgeries, has remained an interesting open question in low-dimensional contact and symplectic topology. This manuscript shows that many instances of such pairs of Legendrian knots, and even infinite families, do indeed exist. In particular, our results imply that, in general, the contactomorphism type of a $(-1)$-surgery along a Legendrian knot $L$ in $(S^3,\xi_{st})$, or even the equivalence class of the Stein trace, knows relatively little about the Legendrian knot $L$. In fact, we present different techniques, including explicit Weinstein-Kirby calculus and abstract contact topological arguments, that lead to the construction of such Legendrian knots. In contrast, the article also explores different cases in which the contactomorphism type(s) of a contact $r$-surgery along $L$ in $(S^3,\xi_{st})$ does actually recover the Legendrian isotopy class of $L$. The manuscript concludes with a discussion on conjectural matters which are naturally inferred from our results.

\subsection{Scientific Context} A natural and much studied problem in low-dimensional topology asks when the diffeomorphism type of a surgery on a knot in $S^3$ determines the smooth isotopy class of the knot \cite{Brakes80,KirbyList,Osoinach06}. In fact, there has been abundant work on the construction of families of knots which share not only a common $3$-dimensional $n$-surgery, but also have a common $4$-dimensional $n$-trace, see e.g.~ \cite{AbeJongLueckeOsoinach15, AbeJongOmaeTakeuchi13, MillerPiccirillo18, Piccirillo19}.\footnote{The $n$-trace of a knot $K$ in $ S^3$ is the smooth $4$-manifold obtained as the result of attaching a $2$-handle to the $3$-sphere boundary of the $4$-ball along the knot $K$ with framing $n$.} As a complement to these results, it is also known that any surgery on the unknot~\cite{KronheimerMrowkaOzsvathSzabo07}, the trefoil and the figure eight knot~\cite{OzsvathSzabo19}, actually characterizes the smooth type of these knots. For instance, if the $r$-surgery on a knot $K$ in $S^3$ is diffeomorphic to the $r$-surgery on the unknot, then $K$ is smoothly isotopic to the unknot; similarly for the trefoil and the figure eight knot. Given all these results, the problem is, in a sense, quite well-studied in smooth low-dimensional topology. The aim of this article is to study the analogous problem in low-dimensional contact and symplectic topology, and provide some (encouraging) initial answers.

In precise terms, let $L$ in $ (S^3,\xist)$ be a Legendrian knot in the standard contact $3$-sphere, which is seen as the contact boundary of the standard symplectic $4$-ball $(D^4,\lambda_{st})$. By performing contact $(\pm 1)$-surgery on $L$  we produce a new contact manifold $L(\pm 1)$. Contact $(-1)$-surgery is sometimes also called Legendrian surgery. In fact, we may also construct a new symplectic (even Stein) $4$-manifold $W_L$, whose contact boundary is $L(-1)$. The Stein 4-manifold $W_L$ is said to be the \textit{Stein trace} of $L$, and it is obtained by attaching a Weinstein $2$-handle along $L$ to $(D^4,\lambda_{st})$, following \cite[Section 3]{Weinstein91}. For the necessary background, see e.g.~\cite{ArnoldGivental01,Geiges08,Weinstein91}. 

In this manuscript, two Stein manifolds are said to be \textit{equivalent} if they are symplectomorphic after possibly deforming the symplectic structure on one of the manifolds. 

\begin{defi}
	Let $L$ in $(S^3,\xist)$ be a Legendrian knot. By definition, $L$ is said to be \textit{Stein characterized} if whenever $W_{L}$ is equivalent to $W_{L'}$ for some Legendrian knot $L'$ in $(S^3,\xist)$ then $L'$ is Legendrian isotopic to $L$. Similarly, the contact surgery slope $-1$ (or $+1$) is called \textit{characterizing} for $L$ if whenever $L(-1)$ (or $L(+1)$) is contactomorphic to $L'(-1)$ (or $L'(+1)$) for some Legendrian knot $L'$ in $(S^3,\xist)$, then $L'$ is Legendrian isotopic to $L$.\hfill$\Box$
\end{defi}

Since the boundary of $W_L$ carries a natural contact structure contactomorphic to $L(-1)$, it follows that if the slope $-1$ is characterizing for $L$, then $L$ is also Stein characterized. (Note that we will later also discuss contact $r$-surgeries for other slopes $r\not=\pm1$.) The definition of a non-characterizing slope is extended to general contact manifolds in the natural way.

\subsection{Non-characterizing slopes}
In \cite{Etnyre08}, the first examples of Legendrian knots in $(S^3,\xist)$ where $(+1)$ is non-characterizing were given, using convex surface theory. By using the cancellation lemma one sees directly that there exist Legendrian knots $L$ and $L'$, the dual surgery knots, in a contact manifold different from $(S^3,\xist)$ such that $(-1)$ is a non-characterizing contact surgery slope. It remained open if there exist non-isotopic Legendrian knots in $(S^3,\xist)$ for which $(-1)$ is a non-characterizing slope~\cite[Question~14]{Etnyre08}. 

In this manuscript, we give several constructions that lead to pairs, and even infinite families, of Legendrian knots in $(S^3,\xist)$ that have the contactomorphic contact $(-1)$-surgeries and, in some cases, even equivalent $4$-dimensional Stein traces. This answers the above question in the affirmative. Our first construction is the contact annulus twist, a contact version of the smooth annulus twist developed in the series of articles \cite{AbeJongLueckeOsoinach15, AbeJongOmaeTakeuchi13, MillerPiccirillo18, Osoinach06}. This is discussed in Section~\ref{sectionannulustwist} and will lead to constructions of infinite families of Legendrian knots in $(S^3,\xist)$ that have the same Legendrian surgeries and, in some cases, the same Stein traces. In particular, we prove the following result:

\begin{thm}\label{thm:main} Let $(S^3,\xi_{st})$ be the standard contact $3$-sphere. Then the following holds:
\begin{itemize}[nosep, noitemsep, topsep=0pt]
	\item[(i)] There exist infinitely many pairwise non-isotopic Legendrian knots $L_n$ in $(S^3,\xist)$, $n\in\N_0$\footnote{We denote the natural numbers including zero by $\N_0$ and the natural numbers without zero by $\N$.}, such that their Stein traces $W_{L_n}$ are all equivalent to $W_{L_0}$, for any $n \in \N_0$. These knots $L_n$ all have $(\tb,\rot)=(1,0)$.
	\end{itemize}
$\quad\,\,\,\text{For more general $tb$, we have:}$ 
\begin{itemize}[nosep, noitemsep, topsep=0pt]
	\item[(ii)]  For each $m\in\Z$, there exist pairs of non-isotopic Legendrian knots $L_m,L'_m$ in $(S^3,\xist)$, with $\tb(L_m)=\tb(L_m')=m$, such that their Stein traces $W_{L_m}$ and $W_{L'_m}$ are equivalent.	
\end{itemize}
\end{thm}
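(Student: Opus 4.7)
The plan is to prove part (i) by developing a contact analog of the classical smooth annulus twist of Osoinach, and to obtain part (ii) by combining this with Legendrian stabilization together with a separately chosen smooth seed for large $\tb$. For part (i), recall that there exist smooth families of pairwise non-isotopic knots $K_n\subset S^3$ all sharing a common $0$-trace, produced by iterating an annulus twist. First, select such a family whose seed $K_0$ admits a Legendrian representative $L_0$ in $(S^3,\xist)$ with $(\tb,\rot)=(1,0)$; note that when $\tb=1$ the Stein trace $W_{L_0}$ is smoothly the $0$-trace of $K_0$, so the relevant smooth trace is exactly the one preserved under the annulus twist. Next, perform the annulus twist in the Weinstein category: realize the twisting annulus as an embedded convex surface with prescribed dividing set compatible with $\xist$ and interpret the resulting contact annulus twist as a Weinstein handlebody equivalence, i.e.~a finite sequence of Legendrian isotopies of the $2$-handle attaching sphere together with handle slides and cancellations that preserves the symplectomorphism type of the $4$-manifold up to Stein deformation. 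This produces Legendrian knots $L_n$, smoothly isotopic to $K_n$, all with $(\tb,\rot)=(1,0)$, and with Stein traces $W_{L_n}$ equivalent to $W_{L_0}$. Pairwise Legendrian non-isotopy of the $L_n$ is inherited from the smooth distinctness of the underlying $K_n$.

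For part (ii), to realize a pair at every $\tb=m$, the strategy bifurcates on the sign of $m-1$. For $m\leq 1$, reduce to part (i): stabilize both members of the pair $(L_0,L_1)$ with $\tb=1$ by the same sequence of $1-m$ Legendrian stabilizations, producing $(L_m,L_m')$ with $\tb=m$; applying the same stabilizations on both sides amounts to introducing the same cancelling handle pair into each Weinstein trace, and so preserves the Weinstein equivalence. For $m\geq 2$, where stabilization cannot raise $\tb$, one instead selects a smooth pair of knots sharing a common $(m-1)$-trace from the smooth literature referenced in the introduction, realizes them by Legendrian representatives with $\tb=m$, and upgrades the smooth trace diffeomorphism to a Weinstein equivalence by an explicit Weinstein--Kirby calculation.

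The principal obstacle throughout is the Weinstein upgrade: converting a smooth trace-preserving modification into a genuinely Weinstein-level equivalence, not merely a diffeomorphism of the underlying $4$-manifolds. This requires a careful handle calculus that tracks the Legendrian isotopy type of every attaching sphere through each handle move and ensures that all intermediate attaching data remain Legendrian with respect to the induced contact structures on the boundaries. A secondary subtlety appears in part (ii): verifying that stabilization genuinely preserves Stein trace equivalence, i.e.~that the cancelling handle pairs introduced on each Weinstein side can be identified symmetrically through an explicit Weinstein handle move and not only diffeomorphically.
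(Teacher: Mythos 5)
Your overall strategy for part (i) --- a contact version of Osoinach's annulus twist applied to a seed with $(\tb,\rot)=(1,0)$ --- matches the paper's, but the mechanism you propose for the crucial four-dimensional upgrade is not the one that works, and you have not supplied a substitute. The contact annulus twist is intrinsically a \emph{three-dimensional} operation: it introduces a cancelling pair of contact $(\pm1/n)$-surgeries along the boundary of a pre-Lagrangian annulus and slides the surgery knot over them, and contact $(+1/n)$-surgery is \emph{not} a Weinstein handle attachment. So the twist cannot be ``interpreted as a finite sequence of Legendrian isotopies, handle slides and cancellations'' of Weinstein handles; it only produces a contactomorphism $\phi\co L_0(-1)\to L_n(-1)$. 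The paper bridges the gap with a separate symplectic analogue of Akbulut's lemma: if $\phi$ sends the belt sphere $\mu$ of the $2$-handle (which bounds the Lagrangian co-core in $W_{L_0}$) to a Legendrian unknot bounding a Lagrangian disk in $W_{L_n}$, then $\phi$ extends to an equivalence of Stein traces --- the proof removes neighborhoods of the two Lagrangian disks and invokes Gromov--McDuff uniqueness of minimal symplectic fillings of $(S^3,\xist)$ to identify the remaining pieces with standard balls. One then has to \emph{verify} that $\phi(\mu)$ bounds a Lagrangian disk, which the paper does by tracking $\mu$ explicitly through the twist when the annulus comes from the maximal-$\tb$ unknot. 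Your ``careful handle calculus'' is feasible for a single pair (the paper exhibits one such explicit equivalence), but nothing in your plan makes it run uniformly over the infinite family, and the $(+1/n)$-surgeries block the naive handle-theoretic reading.

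Part (ii) has two further problems. For $m\le 1$, your claim that stabilizing both $L_0$ and $L_1$ identically ``introduces the same cancelling handle pair into each Weinstein trace'' is false: stabilization changes the attaching framing, hence the trace, and is not realized by a cancelling Weinstein pair. Nor does $L_0(-1)\cong L_1(-1)$ imply $S(L_0)(-1)\cong S(L_1)(-1)$ --- one must know where the contactomorphism sends a meridian of $L_0$. The paper realizes stabilization as a contact $(1+1/m)$-surgery on a meridian $\mu$, pushes $\mu$ through the annulus twist to a curve $\mu'$ that is a maximal-$\tb$ unknot but \emph{not} a meridian of $L_1$, and the second knot $L'_m$ of the pair is the image of the twisted knot under the surgery on $\mu'$; it is not known to be a stabilization of $L_1$ at all (the authors remark that destabilizing it is not straightforward). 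A second unlinked meridian is then needed to feed the extension lemma. For $m\ge 2$ your proposal is essentially a restatement of the goal: picking a smooth pair with a common $(m-1)$-trace, finding Legendrian representatives with $\tb=m$ (which already requires $\overline{\tb}\ge m$), and ``upgrading'' the smooth diffeomorphism is precisely the open difficulty --- indeed the paper's Theorem~\ref{thm:notExtends} shows that contactomorphic surgeries can have non-homeomorphic Stein traces, so no such upgrade is automatic. The paper instead constructs an explicit family of Legendrian RGB links, for which the extension lemma again applies because the relevant meridian is sent to an unknot bounding a Lagrangian disk, realizing every $\tb>1$.
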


The Legendrian knots $L_n$ in Theorem \ref{thm:main}.(i), as well as the pairs in Theorem \ref{thm:main}.(ii), are all pairwise distinguished by their smooth isotopy types, and thus they are readily not Legendrian isotopic. Theorem \ref{thm:main}.(i) implies that there exist infinitely many non-isotopic Legendrian knots $L_n$ in $(S^3,\xist)$, $n\in\N_0$, such that $L_n(-1)$ is contactomorphic to $L_0(-1)$, and thus $(-1)$ is not a characterizing slope for either of these Legendrian knots. 

In Section~\ref{sec:RGB} we also construct examples of Legendrian knots with contactomorphic Legendrian surgeries but non-equivalent Stein traces.

\begin{thm}\label{thm:notExtends}
	There exist pairs of Legendrian knots $L$ and $L'$ in $(S^3,\xist)$ that have contactomorphic Legendrian surgeries but whose Stein traces are not homeomorphic.
\end{thm}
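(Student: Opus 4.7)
\textbf{Proof plan for Theorem \ref{thm:notExtends}.}

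The plan is to construct a pair of Legendrian knots $L,L'\subset(S^3,\xist)$ whose contact $(-1)$-surgeries are contactomorphic but whose Thurston--Bennequin invariants are distinct. This immediately yields Theorem~\ref{thm:notExtends}: the underlying smooth $4$-manifold of the Stein trace $W_L$ is the cobordism obtained by attaching a single $2$-handle to $D^4$ along the topological knot type $K_L$ of $L$ with framing $\tb(L)-1$, so $H_2(W_L;\Z)\cong\Z$ is equipped with the rank-one intersection form $(\tb(L)-1)$. Two such unimodular forms $(n)$ and $(m)$ are isomorphic only when $n=m$, so the inequality $\tb(L)\neq\tb(L')$ forces $W_L$ and $W_{L'}$ to have non-isomorphic intersection forms, and hence to be non-homeomorphic (even non-homotopy-equivalent).

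To produce such a pair I would adapt the Weinstein--Kirby technology developed earlier in the paper, and in particular the contact annulus twist of Section~\ref{sectionannulustwist}. Starting with a Legendrian $L\subset(S^3,\xist)$, I would enlarge the surgery diagram by a contact cancellation pair $C\sqcup C'$, where $C$ carries contact surgery slope $-1$ and a Legendrian push-off $C'$ carries slope $+1$, placed disjointly from $L$ so that the enlarged diagram still realises $L(-1)$. Handle-sliding $L$ across $C$ and then cancelling the pair $\{C,C'\}$ yields a Legendrian $L'\subset(S^3,\xist)$ for which $L'(-1)\cong L(-1)$ by construction. The crucial design choice is to position $C$ relative to $L$ and select its Thurston--Bennequin invariant so that the slide changes the classical framing of the surviving $2$-handle; a contact Rolfsen-twist interpretation of this move shows that, by choosing the linking of $C$ with $L$ and the contact framing of $C$ appropriately, one can force $\tb(L')\neq\tb(L)$ as integers.

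The main obstacle is to ensure the contact, rather than merely smooth, equivalence of the two surgeries. This is delivered by the construction above, because each step --- Legendrian handle slide, introduction and cancellation of contact pairs, contact isotopy --- is realised by an ambient contactomorphism of the surgered boundary, yielding a contactomorphism $L(-1)\cong L'(-1)$ on the nose. The remaining check, that $L'$ is Legendrian-distinct (indeed even smoothly distinct) from $L$, is automatic as soon as $\tb(L')\neq\tb(L)$. Combining this boundary contactomorphism with the rank-one intersection-form obstruction of the first paragraph then completes the proof.
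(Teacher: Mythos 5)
Your reduction fails at the very first step: a pair of Legendrian knots with contactomorphic Legendrian surgeries but distinct Thurston--Bennequin invariants does not exist, so no construction along the lines you sketch can succeed. The remark following Theorem~\ref{computInv} records (citing \cite[Corollary 4.6]{Gompf98}) that contactomorphic $(-1)$-surgeries force $\tb(L)=\tb(L')$ and $|\rot(L)|=|\rot(L')|$. Even arguing from scratch, $H_1(L(-1))\cong\Z_{|\tb(L)-1|}$ only leaves the option $\tb(L')=2-\tb(L)$, and then one of the two traces would be a Stein filling of the common boundary with $b_2^+>0$ while the other is negative definite --- exactly the configuration ruled out in the proof of Theorem~\ref{minus1char}(1) for lens spaces and heavily obstructed in general by the Euler class and $d_3$ computations there (one would need $\rot(L)^2+\rot(L')^2=6(1-\tb(L))$, etc.). Your proposed mechanism also does not produce what you want: the contact handle slide of Lemma~\ref{lem:contactHandleSlide} preserves the contact surgery coefficient, and every pair produced by a cancelling pair plus slides (i.e.\ by the contact annulus twist) comes out with equal $\tb$ --- see Theorem~\ref{thm:main}, where all the $L_n$ have $\tb=1$ and the pairs $L_m,L'_m$ satisfy $\tb(L_m)=\tb(L'_m)=m$. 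There is no freedom to ``position $C$ so that the slide changes the classical framing'' while still landing back in $(S^3,\xist)$ with a contact $(-1)$-coefficient on the surviving knot. (As a side point, even if different $\tb$ were achievable, the forms $(\tb(L)-1)$ and $(1-\tb(L))$ are isomorphic under an orientation-reversing identification, so your intersection-form argument would only exclude orientation-preserving homeomorphisms as stated.)

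Because equal $\tb$ (and equal $|\rot|$) is unavoidable, the two Stein traces in any such pair have isomorphic intersection forms and are in fact homotopy equivalent, so no closed algebraic-topological invariant of the traces alone can prove Theorem~\ref{thm:notExtends}. The paper's proof is correspondingly more delicate: it produces $L,L'$ from a three-component surgery link via Theorem~\ref{thm:easy0surgery} with an explicit contactomorphism $\phi\colon L(-1)\to L'(-1)$, shows via hyperbolic geometry that $\phi$ is essentially the only candidate gluing map, and then shows $\phi$ does not extend by comparing the intersection form of the closed manifold $W_{L'}\cup_\phi -W_L$ (odd) with that of the double $DW_L$ (even), following the strategy of \cite{ManolescuPiccirillo21}. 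You would need some argument of this relative type --- pinning down the boundary identification and obstructing its extension --- rather than an absolute invariant of the traces.
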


In particular, there exists a contact manifold $(M,\xi)$ that admits (at least) two non-equivalent simply connected Stein fillings, and each of these two Stein fillings is obtained by attaching a single Weinstein $2$-handle to the standard symplectic $4$-ball.

\begin{figure}[htbp]{\small
		\begin{overpic}%[grid,tics=10] 
			{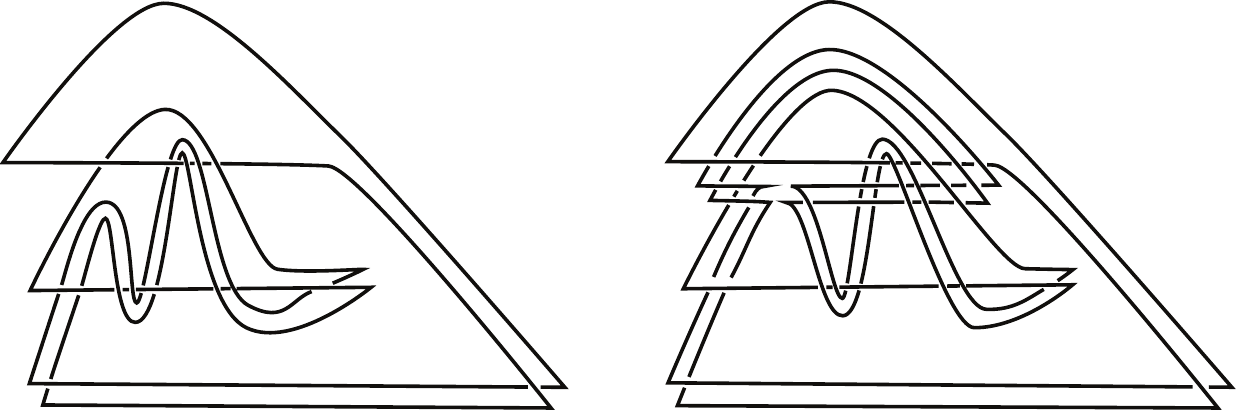}
			\put(120, 60){$L_0$}
			\put(313, 60){$L_1$}
			\put(115, 13){$(-1)$}
			\put(315, 13){$(-1)$}
	\end{overpic}}
	\caption{Instances of Legendrian knots $L_0$ and $L_1$ for Theorem \ref{thm:main}.(i).}
	\label{fig:primeex}
\end{figure}

\begin{ex}
Instances of $L_0$ and $L_1$ for Theorem \ref{thm:main}.(i) are shown in Figure~\ref{fig:primeex} and, in general, $L_n$ is obtained by a contact annulus twist on $L_{n-1}$, as we have developed in Section~\ref{sectionannulustwist}. An explicit Stein equivalence between $W_{L_0}$ and $W_{L_1}$ is shown in Figure \ref{fig:ExampleEquivalence}. Of course, once we have given our candidates $L_0$ and $L_1$ in $(S^3,\xi_{st})$ one may try to explicitly show they give the same Stein manifolds. However, without some good conceptual framework to systematically construct such pairs (and even infinite families) of knots, as developed in Sections~\ref{sectionannulustwist} and \ref{sec:other}, it is not easy to find them, much less find the explicit handle slides to demonstrate that they are equivalent. \hfill$\Box$
\end{ex}

\begin{center}
	\begin{figure}[htbp]
		\centering
		{\small
		\begin{overpic}%[grid,tics=10] 
			{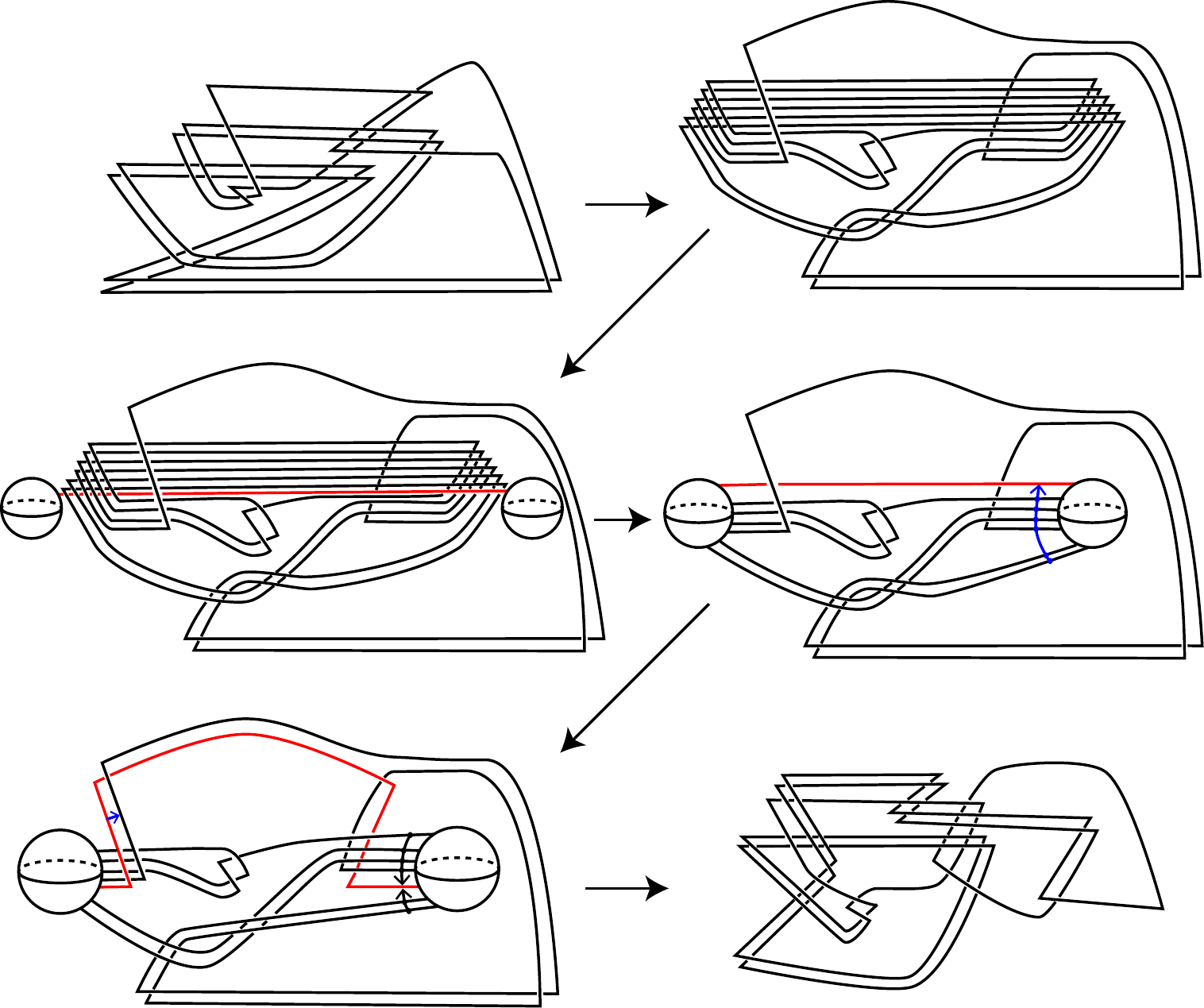}
			\put(0, 350){$(i)$}
			\put(240, 350){$(ii)$}
			\put(0, 230){$(iii)$}
			\put(240, 230){$(iv)$}
			\put(0, 95){$(v)$}
			\put(240, 95){$(vi)$}
			\put(40, 330){$L_0$}
			\put(320, 345){$L_0$}
			\put(150, 272){$(-1)$}
			\put(390, 272){$(-1)$}
			\put(150, 142){$(-1)$}
			\put(390, 140){$(-1)$}
			\put(5, 198){\color{red}$(-1)$}
			\put(290, 195){\color{red}$(-1)$}
			\put(79, 86){\color{red}$(-1)$}
			\put(140, 10){$(-1)$}
			\put(390, 20){$(-1)$}
	\end{overpic}}
		\caption{A sequence of equivalent Weinstein handlebodies, starting at the Stein trace $W_{L_0}$ and ending at $W_{L_1}$. This explicitly exhibits two Legendrian knots, $L_0$ and $L_1$ in $(S^3,\xi_{st})$, and the equivalence between their Stein traces. $(i)$ to $(ii)$ is a Legendrian isotopy. In $(iii)$ we introduce a canceling $1$-/$2$-handle pair. We go from $(iv)$ to $(iii)$ by sliding the black $2$-handle over the red one as indicated with the blue arrow. Finally we can go from $(v)$ to $(iv)$, respectively $(vi)$, by handle slides as indicated with the blue, respectively black arrows.  To our knowledge, this is the first example of such a phenomenon. Theorem~\ref{thm:main} and the techniques we develop in Sections~\ref{sectionannulustwist} and \ref{secondannulus} vastly generalize this equivalence and provide more conceptual arguments and techniques to construct them.}
		\label{fig:ExampleEquivalence}
	\end{figure}
\end{center}

The proof of Theorem~\ref{thm:main}.(i) is given in Section~\ref{sectionannulustwist}. First, we construct the candidate knots $L_n$ in $(S^3,\xi_{st})$, $n\in\N_0$, and prove that their contact $(-1)$-surgeries are all contactomorphic. This is achieved by explicitly describing the knots $L_n$ in their front projections and exhibiting the contactomorphisms between their Legendrian surgeries via sequences of contact handle slides. This requires us to generalize contact handle slides~\cite{DingGeiges09} to surgery coefficients of the form $\pm1/n$ and then develop a contact version of the annulus twist~\cite{Osoinach06}, which both might be of independent interest. Then, the equivalence of Stein traces in Theorem~\ref{thm:main}.(i) is obtained by showing that these $3$-manifold contactomorphisms extend to an equivalence of their $4$-dimensional Stein traces. This is achieved by generalizing a construction of Akbulut~\cite{Akbulut77}, which allows to extend certain diffeomorphisms of a $3$-manifold over a $4$-manifold that it bounds, to contact and symplectic topology. Theorem \ref{thm:main}.(ii) is proven similarly.

In Section~\ref{sec:other} we will also develop other methods to create pairs of Legendrian knots that share the same Legendrian surgery or Stein traces. In particular, we will generalize dualizable patterns \cite{Brakes80, MillerPiccirillo18} and RGB links \cite{Piccirillo19,Tagami1} to the contact and symplectic framework.

\subsection{Characterizing slopes}\label{sec:main_char}
In contrast to Theorem~\ref{thm:main} above, we also show that certain Legendrian knots $L$ in $(S^3,\xi_{st})$ have characterizing slopes. Let us start by proving that certain Legendrian knots are determined by their Stein traces: 
\begin{thm}\label{Steinchar}
Let $L$ in $(S^3,\xi_{st})$ be a Legendrian realization of the unknot, the right- or left-handed trefoil, or the figure eight knot, and let $L'$ in $(S^3,\xi_{st})$ be another Legendrian knot. Suppose that the Stein trace $W_L$ of $L$ is equivalent to the Stein trace $W_{L'}$ of $L'$. Then $L'$ is Legendrian isotopic to $L$ in $(S^3,\xi_{st})$. 
\end{thm}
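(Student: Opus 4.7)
My strategy is to combine the smooth characterizing-slope results cited in the introduction with the existing Legendrian classifications of the four prescribed knot types, reading the classical invariants $\tb$ and $\rot$ off the topology and first Chern class of the Stein trace.

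First, I would record the classical invariants as invariants of $W_L$. Since $W_L$ is obtained from the standard symplectic $D^4$ by attaching a single Weinstein $2$-handle along $L$, it is a $2$-handlebody with $H_2(W_L;\Z)\cong \Z$ and intersection form $(\tb(L)-1)$. Any Stein equivalence $W_L\to W_{L'}$ is in particular an orientation-preserving diffeomorphism, so the intersection forms agree and $\tb(L)=\tb(L')$. Moreover, by Gompf's formula for Stein handlebodies, the first Chern class $c_1(W_L)\in H^2(W_L;\Z)\cong \Z$ evaluates on the generator of $H_2(W_L;\Z)$ (a Seifert surface of $L$ capped off by the core of the handle) to $\rot(L)$. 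Since a symplectic deformation equivalence preserves $c_1$, comparing Chern classes on the two sides yields $|\rot(L)|=|\rot(L')|$, with the sign ambiguity coming only from the choice of generator of $H_2$.

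Second, I would identify the underlying smooth knot type of $L'$. Restricting the Stein equivalence to boundaries gives a diffeomorphism
\[
S^3_{\tb(L)-1}(K)\;\cong\; S^3_{\tb(L')-1}(K'),
\]
where $K,K'$ are the underlying smooth knots of $L,L'$. By Kronheimer-Mrowka-Ozsv\'ath-Szab\'o for the unknot and Ozsv\'ath-Szab\'o for the two trefoils and the figure eight, \emph{every} slope is a characterizing slope for these knots, so $K'$ is smoothly isotopic to $K$. The sign of $\tb(L)-1$, together with the distinct maximal Thurston-Bennequin numbers of the right- and left-handed trefoils, rules out the mirrored identification in the trefoil case; the unknot and the figure eight are amphichiral so only one smooth type appears for them.

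Third, I would conclude via the Legendrian classification theorems. By Eliashberg-Fraser for the Legendrian unknot and Etnyre-Honda for the Legendrian realizations of the right- and left-handed trefoils and the figure eight, every Legendrian representative of these smooth knots is determined up to Legendrian isotopy by $(\tb,\rot)$. Having matched the smooth knot type, the Thurston-Bennequin invariant, and (up to sign) the rotation number, and using that all four of these knots are Legendrian invertible — so the $\rot\mapsto -\rot$ symmetry coming from the $H_2$-sign ambiguity is realized by an ambient Legendrian isotopy — one concludes that $L'$ is Legendrian isotopic to $L$ in $(S^3,\xist)$. The main obstacle I foresee is precisely this orientation bookkeeping: one has to verify carefully that the sign ambiguity in extracting $\rot$ from $c_1(W_L)$ is harmless, and that the Stein orientation suffices to exclude the mirror knot in the trefoil case. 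Once those two points are in place, the argument reduces the Legendrian problem to the smooth characterizing-slope theorems and the $(\tb,\rot)$-classifications already on the books.
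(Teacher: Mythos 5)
Your proposal is correct and follows essentially the same route as the paper: the intersection form of the trace pins down $\tb$, the smooth characterizing-slope theorems of Kronheimer--Mrowka--Ozsv\'ath--Szab\'o and Ozsv\'ath--Szab\'o applied to the boundary pin down the smooth knot type, the first Chern class pins down $\rot$ (the sign ambiguity you flag is absorbed by the paper's convention of working with unoriented Legendrians and $|\rot|$), and Legendrian simplicity finishes the argument. The only difference is cosmetic: the paper first constrains $\tb(L')\in\{t,2-t\}$ from $H_1(\partial W_L)\cong\Z_{|t-1|}$ before invoking the intersection form, whereas you go straight to the intersection form.
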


In fact, for many of these Legendrian knots $(-1)$ is also a characterizing slope, as we show in the following result.
\begin{thm}\label{minus1char}
Let $L$ and $L'$  be two Legendrian knots in $(S^3,\xi_{st})$ such that $L$ is
\begin{enumerate}
\item any Legendrian unknot,
\item a right-handed Legendrian trefoil with $\rot(L)=0$, or $\tb(L)\in \{1, 0,-1\}$,
\item a left-handed Legendrian trefoil or a Legendrian figure eight knot with 
\begin{enumerate}
	\item $\rot(L)=0$, or 
	\item  $\tb(L)\geq -10$ (and in the case that $\tb(L)=-6$ we have $\rot(L)\neq0$), or 
%\item  $tb(L)\geq -5$ or $-7\geq tb(L)\geq -10$, or $tb(L)=-6$ and $\rot(L)\not=0$, or 
\item $\rot(L)\geq \sqrt{6(1-\tb(L))}$.
\end{enumerate}
\end{enumerate}
If contact $(-1)$-surgery on $L$ is contactomorphic to contact $(-1)$-surgery on $L'$, then the Legendrian knot $L'$ is Legendrian isotopic to $L$ in $(S^3,\xi_{st})$. 
\end{thm}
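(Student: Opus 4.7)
My plan is to combine the smooth characterizing-slope results cited in the introduction with a contact-invariant analysis of the contact $3$-manifold $L(-1)$.

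First, a contactomorphism $L(-1)\cong L'(-1)$ forgets to an orientation-preserving diffeomorphism of the underlying smooth $3$-manifolds, namely the surgeries $S^3_{\tb(L)-1}(K)\cong S^3_{\tb(L')-1}(K')$, where $K$ and $K'$ denote the smooth knot types of $L$ and $L'$. By the theorem of Kronheimer-Mrowka-Ozsv\'ath-Szab\'o for the unknot and of Ozsv\'ath-Szab\'o for the trefoils and figure eight knot, every non-trivial rational slope on these four knot types is smoothly characterizing. This forces $K'=K$ and $\tb(L')=\tb(L)$.

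Second, by the Legendrian classifications of Eliashberg-Fraser (unknot) and Etnyre-Honda (trefoils and figure eight), Legendrian realisations of $K$ with a fixed value of $\tb$ are classified up to Legendrian isotopy by their rotation number. The remaining task is therefore to show, in each of the regimes (1)--(3), that $\rot(L)$ is recovered (up to the $\rot\mapsto-\rot$ symmetry of the mountain range, which must be treated separately) from the contactomorphism type of $L(-1)$. For this I would invoke the $d_3$-invariant formula of Gompf and Lisca-Mati\'c for contact surgery diagrams, the classification of tight contact structures on lens spaces by Honda in case (1) and on the relevant small Seifert fibered spaces by Wu and Ghiggini in cases (2) and (3), and the Ozsv\'ath-Szab\'o contact invariant of $L(-1)$.

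Each of the hypothesis regimes carves out a range where these invariants suffice. Case (1) and case (2) follow from the small number of candidate rotation numbers at the allowed values of $\tb$, for which the $d_3$-invariant already separates the cases; the symmetry $\rot(L)=0$ in (2) and (3a) is further exploited via the palindromic structure of the mountain range. Case (3b) is the range of $\tb$ in which the $d_3$-values of the finitely many candidate rotation numbers are pairwise distinct, the exception at $\tb=-6$ being a genuine coincidence at that single value. Case (3c) is an extremal regime in which the lower bound $\rot(L)\geq\sqrt{6(1-\tb(L))}$ is precisely what is needed to single out the correct rotation number by maximality of $d_3$ among Legendrian realisations of $K$ at the given $\tb$.

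The main obstacle is this last step: the invariants available for the $3$-manifold $L(-1)$ (rather than for its Stein filling $W_L$) must be shown sensitive enough to detect $\rot(L)$ in each regime. Deep inside the mountain range of the left-handed trefoil or figure eight knot, many Legendrian candidates coexist at a given $\tb$ and the $d_3$ values can collide, which is exactly why the theorem is restricted to the three complementary regimes (3a), (3b), and (3c) rather than asserted for all pairs $(\tb,\rot)$.
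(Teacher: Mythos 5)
Your first step contains the essential gap. From a contactomorphism $L(-1)\cong L'(-1)$ you only get an orientation-preserving diffeomorphism $S^3_{\tb(L)-1}(K)\cong S^3_{\tb(L')-1}(K')$, and comparing first homology gives $\tb(L')\in\{\tb(L),\,2-\tb(L)\}$. The characterizing-slope theorems of Kronheimer--Mrowka--Ozsv\'ath--Szab\'o and Ozsv\'ath--Szab\'o compare surgeries with the \emph{same} coefficient on both sides, so they only apply in the branch $\tb(L')=\tb(L)$; they say nothing about a diffeomorphism $S^3_{\tb(L)-1}(K)\cong S^3_{1-\tb(L)}(K')$, where the two coefficients are negatives of each other. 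Hence your claim that the smooth results ``force $K'=K$ and $\tb(L')=\tb(L)$'' is unjustified, and in fact the whole difficulty of the theorem --- and the reason for the restrictive hypotheses in cases (2) and (3) --- lies in excluding the branch $\tb(L')=2-\tb(L)$. If your step were correct, the theorem would hold for every Legendrian realization of these knots with no constraint on $(\tb,\rot)$, which is contradicted by the remark following the paper's proof that, e.g., a Legendrian figure eight knot with $\tb=-11$ and $\rot=6$ cannot be handled by these methods.

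For the record, the paper treats the branch $\tb(L')=2-\tb(L)$ as follows: for the unknot, $L(-1)$ is a lens space and McDuff's classification of its Stein fillings shows they are all negative definite, whereas $W_{L'}$ would be a positive-definite filling, a contradiction; for the trefoils and the figure eight knot, the $d_3$-formula yields $\rot(L')^2=-\rot(L)^2+6\bigl(1-\tb(L)\bigr)$, and hypotheses (3a)--(3c) are precisely the regimes in which this equation has no admissible integer solution, or its only solution is excluded by comparing Euler classes. By contrast, the branch $\tb(L')=\tb(L)$, where you concentrate your effort, is the easy one: smooth characterization gives $K'=K$, the Euler class alone (no classification of tight structures on Seifert fibered spaces and no Ozsv\'ath--Szab\'o contact invariant is needed) pins down $\rot(L')=\rot(L)$, and Legendrian simplicity finishes. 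Your reading of hypothesis (3c) as a maximality statement for $d_3$ at fixed $\tb$ is also not how it functions: it forces $\rot(L')^2\le 0$ in the mirror branch, whence $\rot(L')=0$, which is then ruled out by the Euler class.
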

We can also show that a few of these knots also have $(+1)$ as a characterizing slope.
\begin{thm}\label{1characterize}
Let $L$ in $(S^3,\xi_{st})$ be a Legendrian realization with $\rot(L)=0$ of the unknot, the right- or left-handed trefoil, or the figure eight knot, and let $L'$ in $(S^3,\xi_{st})$ be another Legendrian knot. Suppose that the contact $(+1)$-surgery $L(+1)$ is contactomorphic to $L'(+1)$. Then the Legendrian knot $L'$ is Legendrian isotopic to $L$ in $(S^3,\xi_{st})$. 
\end{thm}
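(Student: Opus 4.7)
The plan is to combine three ingredients: the smooth characterizing slope theorems of Kronheimer-Mrowka-Ozsvath-Szabo (for the unknot) and Ozsvath-Szabo (for the trefoils and the figure eight knot), the $d_3$ obstruction coming from the standard contact surgery formula, and the Legendrian simplicity of the knot types in the hypothesis (Eliashberg-Fraser for the unknot, Etnyre-Honda for the two trefoils and the figure eight knot). The scheme is: (i) show that a contactomorphism $L(+1)\cong L'(+1)$ forces $L'$ to realize the same smooth knot type $K$ as $L$ and to have $\tb(L')=\tb(L)$; (ii) use $d_3$ to conclude $\rot(L')=0$; (iii) conclude $L'\sim L$ from Legendrian simplicity.

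For (i), a contactomorphism is orientation-preserving and induces an orientation-preserving diffeomorphism identifying $(\tb(L)+1)$-surgery on $K$ with $(\tb(L')+1)$-surgery on the smooth knot type $K'$ of $L'$. In each case the target $3$-manifold determines both the surgery slope and the underlying knot type: when $\tb(L)=-1$ (the standard Legendrian unknot) the target is $S^1\times S^2$, and KMOS forces $K'$ to be the unknot at slope $0$; when $K$ is the unknot with $\tb(L)\le -3$ the target is a lens space, and Moser's classification of lens space surgeries on torus knots combined with the cyclic surgery theorem leaves no room for other knot/slope pairs; when $K$ is a trefoil or the figure eight knot most of the relevant targets are hyperbolic or Seifert fibered with integer surgery slope uniquely determined by the $3$-manifold, and the Ozsvath-Szabo characterizing slope theorems finish the job, with the short list of exceptional lens space surgeries on the trefoils handled separately via Moser. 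One concludes $K'=K$ and $\tb(L')=\tb(L)$.

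For (ii), the $d_3$ invariant of a single contact $(+1)$-surgery on $L\subset(S^3,\xist)$ is given by
\[
d_3(L(+1)) \;=\; \frac{1}{4}\!\left(\frac{\rot(L)^{2}}{\tb(L)+1}\;-\;3\sign(\tb(L)+1)\;-\;4\right) + 1,
\]
valid whenever $\tb(L)+1\neq 0$. Since $d_3$ is a contact invariant, equating $d_3(L(+1))=d_3(L'(+1))$ and using $\tb(L')=\tb(L)$ from (i) collapses to $\rot(L')^{2}=\rot(L)^{2}=0$, so $\rot(L')=0$. The degenerate case $\tb(L)=-1$ is handled directly, since the Legendrian unknot with $\tb=-1$ and $\rot=0$ is unique. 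Step (iii) is then immediate: by Eliashberg-Fraser and Etnyre-Honda, the Legendrian isotopy class of a Legendrian representative of the unknot, either trefoil, or the figure eight knot is determined by $(\tb,\rot)$, so $(\tb(L'),\rot(L'))=(\tb(L),0)$ forces $L'\sim L$.

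The main obstacle is step (i), the slope-matching argument: the smooth characterizing slope theorems only identify the knot type \emph{at a fixed slope}, and do not a priori rule out realizing the same target $3$-manifold via distinct integer surgeries on distinct knots. The technical work is a case-by-case verification — using Moser and the cyclic surgery theorem to control the lens space outputs (particularly for the stabilized Legendrian unknots with $\rot=0$) and the uniqueness of the integer surgery slope on hyperbolic or generic Seifert fibered manifolds — ensuring that the topological slope $\tb(L')+1$ must coincide with $\tb(L)+1$. With this in hand, steps (ii) and (iii) are essentially routine.
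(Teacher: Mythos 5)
Your steps (ii) and (iii) are fine and agree with the paper, but step (i) contains a genuine gap, and it is exactly the point where the paper does something different. From $H_1(L(+1))\cong\Z_{|\tb(L)+1|}$ one only gets the dichotomy $\tb(L')+1=\pm(\tb(L)+1)$, and your proof rests on the claim that the oriented target $3$-manifold pins down both the knot type and the slope, so that the case $\tb(L')+1=-(\tb(L)+1)$ cannot occur smoothly. That claim is false in general, and the paper's own Example 1.9 is a counterexample drawn from precisely the knot types in your statement: $(-5)$-surgery on the unknot and $(+5)$-surgery on the right-handed trefoil give the same oriented lens space, so opposite integer slopes on distinct knots from your list can produce identical targets. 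The tools you propose to repair this do not suffice: the Cyclic Surgery Theorem permits \emph{integral} lens-space surgeries on arbitrary (non-torus) knots, so it does not reduce you to Moser's list, and deciding which knots admit integral surgeries to $\pm L(n,1)$ is the lens space realization problem, far beyond "routine case-by-case verification." Likewise, the Ozsv\'ath--Szab\'o characterizing-slope theorems compare the \emph{same} slope on both sides and say nothing about $S^3_{r}(K)\cong S^3_{-r}(K')$; for the amphichiral figure eight knot this dual-slope question is genuinely delicate.

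The paper avoids this entirely: it keeps both branches of the dichotomy and kills the branch $t'=-t-2$ with contact-geometric homotopy invariants, computing $d_3$ of both surgeries via the Ding--Geiges--Stipsicz/Gompf formula to get ${r'}^2=r^2-6(t+1)$ and then using the Euler class of the contact structure to exclude the remaining integral solutions. This is also why the hypothesis $\rot(L)=0$ appears in the statement --- it is needed for the $d_3$/Euler-class argument in the dual-slope branch, whereas your step (i), if it worked, would not use it there at all. To fix your proof you should either import the paper's $d_3$ and Euler class computation for the case $\tb(L')=-\tb(L)-2$, or supply an actual (and currently missing) smooth argument excluding dual-slope realizations of the specific oriented manifolds $L(t+1,1)$, $S^3_{t+1}(3_1)$, and $S^3_{t+1}(4_1)$.
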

We note that, among other realizations of $L$ in $(S^3,\xi_{st})$, the theorem covers the maximal Thurston--Bennequin invariant realizations of these knots except for the left-handed trefoil. The results for the unknot with $\tb=-1$ also follow from results on contact surgery numbers in~\cite{EtnyreKegelOnaranPre}. In fact, they also can handle the case when $\tb=-2$, and the arguments in the proofs of Theorem~\ref{minus1char} and~\ref{1characterize} can handle those cases as well, and some other cases, even though they are not stated in the theorems. 

\subsection{Results for general surgery slopes}\label{ssec:generalslope}
Let $L$ be a Legendrian knot in $(S^3,\xist)$. In Section~\ref{sec:char} we consider more general characterizing slopes, and study whether contact $(r)$-surgery $L(r)$ on the Legendrian knot $L$ can be characterizing, with $r\in\Q\setminus\{0\}$ a non-vanishing rational number. In this case $L(r)$ might not be a unique contact 3-manifold, and thus we will momentarily provide details on what is meant by a contact $(r)$-surgery being {\it characterizing}. For known results on characterizing slopes in the smooth setting, see e.g.~\cite{BakerMotegi18, KronheimerMrowkaOzsvathSzabo07, Lackenby19, McCoy19, McCoy20, NiZhang14, OzsvathSzabo19} and the references therein. 

Consider the smooth 3-manifold $M$ obtained from $S^3$ by topological $\tb(L)+r$ surgery on $L$. The manifold $M$ has two pieces, one is the complement $S^3\setminus\mathring{\nu L}$ of a standard neighborhood $\nu L$ of $L$ in $S^3$ and the other is a solid torus $V$ which is glued to $S^3\setminus\mathring{\nu L}$, so that its meridian is the curve of slope $\tb(L)+r$ on $\partial (S^3\setminus \mathring{\nu L})$. We can restrict $\xist$ to $S^3\setminus \mathring{\nu L}$ and would like to extend this over $V$. By work of Giroux~\cite{Giroux00} and Honda~\cite{Honda00a}, there are finitely many tight contact structures on $V$ with the given boundary conditions if $r\not=0$, and at least one such tight contact structure exists. We say that any one of these extensions is a contact structure on $M$ obtained by contact $(r)$-surgery on $L$. The set of all contact structures constructed this way will be denoted by $L(r)$.\footnote{For reference, we note that $L(r)$ consists of a unique contact manifold if and only if $r=1/n$ for some integer $n$. So our notation for $L(\pm 1)$ agrees with the notation above. If $n>1$ is an integer, then $L(n)$ consists of two contact manifolds, and if $n$ is a negative integer, then $L(n)$ consists of $|n|$ contact manifolds, which are not always different.}

The collection $L(r)$ is said to be contactomorphic to the collection $L'(r')$ if there exists a bijection between the elements in $L(r)$ and $L'(r')$ such that paired elements are contactomorphic manifolds. Then, a contact surgery coefficient $p/q\in\Q$ is called {\it characterizing} if whenever the collection $L(p/q)$ is contactomorphic to the collection $L'(p/q)$ for some Legendrian knot $L'$ in $(S^3,\xist)$, then $L'$ is Legendrian isotopic to $L$ in $(S^3,\xi_{st})$. This generalizes the notion for contact $(\pm1)$-surgery.\footnote{We observe that the question which contact surgery coefficients are characterizing differs from the topological question since we have always infinitely many Legendrian realizations of a single topological knot type and the framings are measured with respect to the contact framing.}

We show that certain Legendrian knots $L$ in $(S^3,\xi_{st})$ have more general characterizing contact surgery coefficients, as follows:

\begin{thm}\label{uknotchar}
	Let $L$ in $(S^3,\xi_{st})$ be a Legendrian realization of the unknot.
\begin{itemize}
	\item[(i)] If $r<4$, $r\in\Q$, except $r\in\{0,2,3\}$, then $r$ is a characterizing contact slope for $L$.	
	\item[(ii)] In addition, $-\tb(L)$ and $(1-q\cdot \tb(L))q^{-1}$, for $q\in\Z\setminus\{0\}$, are characterizing contact slopes for $L$.
\end{itemize}
\end{thm}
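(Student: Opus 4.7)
By the Eliashberg--Fraser classification of Legendrian unknots in $(S^3,\xi_{st})$, a Legendrian unknot is determined up to Legendrian isotopy by its pair $(\tb,\rot)$. Given a Legendrian knot $L'$ with $L(r)$ contactomorphic to $L'(r)$, the plan proceeds in three steps: first, show that $L'$ is smoothly an unknot; second, show that $\tb(L')=\tb(L)$; third, show that $\rot(L')=\pm\rot(L)$.

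For the first step, write $r=p/q$ in lowest terms with $q>0$. The smooth manifold underlying each element of $L(r)$ is the $(\tb(L)+r)$-surgery on the topological unknot, i.e.\ the lens space $L(p+q\tb(L),q)$. In part (ii) this is either $S^3$ (for $r=(1-q\tb(L))/q$, where $p+q\tb(L)=1$) or $S^1\times S^2$ (for $r=-\tb(L)$, where $p+q\tb(L)=0$); the Gordon--Luecke theorem and the (proved) property-R conjecture respectively force the smooth type of $L'$ to be the unknot, with tightly constrained topological slope. In part (i), the main theorem of \cite{KronheimerMrowkaOzsvathSzabo07} (every rational slope is smoothly characterizing for the unknot), combined with an analysis of which distinct rational surgeries on the unknot produce the same lens space, forces $L'$ to be smoothly an unknot; the excluded cases $r\in\{0,2,3\}$ and $r\ge 4$ in part (i) are precisely those where additional lens-space coincidences obstruct this reduction.

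For the second and third steps, I would exploit the Ding--Geiges algorithm \cite{DingGeiges09} to present the collection $L(r)$ as extensions of contact $(\pm 1)$-surgeries dictated by the continued-fraction expansion of $r$, and apply Honda's classification \cite{Honda00a} of tight contact structures on lens spaces (or the uniqueness of the tight structure on $S^3$ or $S^1\times S^2$ in part (ii)). Matching the cardinality of $L(r)$ and $L'(r)$, together with the associated continued-fraction data, should force $\tb(L')=\tb(L)$. With this in hand, comparing the homotopy invariants $d_3$ of the plane fields in the two collections (computable in terms of rotation numbers from the Legendrian surgery diagrams) yields $\rot(L')=\pm\rot(L)$, at which point the Eliashberg--Fraser classification finishes the proof.

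\textbf{Main obstacle.} The hardest step is identifying $\tb(L')$ with $\tb(L)$, especially for the slopes in part (i): different Legendrian unknots with different $\tb$ could a priori produce the same collection of tight contact structures with coincident homotopy invariants. The exclusions $r\in\{0,2,3\}$ and the bound $r<4$ are precisely those slopes at which such coincidences actually occur, so the bulk of the case analysis will revolve around pinpointing the slopes where the continued-fraction bookkeeping rigidifies the situation. Part (ii) is substantially easier because the uniqueness of the tight contact structure on $S^3$ and $S^1\times S^2$ removes most of the ambiguity in the second step.
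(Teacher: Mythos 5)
Your overall skeleton (reduce to the smooth unknot, pin down $\tb$, pin down $\rot$, finish with Eliashberg--Fraser) is the right one, and your treatment of part (ii) via Property R and Gordon--Luecke matches the paper. The gap is in part (i), at exactly the step you flag as the main obstacle. The homology of $L(p/q)$ is $\Z_{|p+q\tb(L)|}$, which only forces $\tb(L')=\tb(L)$ or $\tb(L')=-\tb(L)-2p/q$. In the first case the smooth surgery coefficients agree and \cite{KronheimerMrowkaOzsvathSzabo07} applies as you say. But in the second case the smooth coefficients are \emph{different}, so the Kronheimer--Mrowka--Ozsv\'ath--Szab\'o theorem says nothing, and the danger is not ``lens-space coincidences among unknot surgeries'' but that $L'$ could be a nontrivial knot admitting a lens space surgery. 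The paper closes this case with two inputs you do not invoke: the Cyclic Surgery Theorem (since the lens space has cyclic fundamental group and the residual smooth coefficient $(-2t-p)/2$ is non-integral, $L'$ must be a torus knot), and Moser's classification of torus knot surgeries (an $(a,b)$-torus knot yields a lens space only at slopes $ab\pm 1/m$, which forces contact slope at least $a+b-1\geq 4$ for positive torus knots and rules out negative ones). This is where the hypothesis $r<4$ actually enters; it is not an artifact of coincidences among unknot surgeries.

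Your proposed mechanism for step two also does not work as stated: the cardinality of the collection $L(r)$ is governed by the continued-fraction expansion of the contact coefficient $r$ alone, so it is the same for $L$ and $L'$ regardless of their Thurston--Bennequin invariants and carries no information. Moreover, the contact structures in $L(r)$ need not be tight for $r>0$ on stabilized unknots (this is exactly what Example~\ref{ex:noncharunknot} in the paper exploits to produce non-characterizing slopes), so Honda's classification of tight structures on lens spaces cannot be applied to the whole collection. The paper instead gets $\tb(L')=\tb(L)$ from the homology dichotomy plus the Cyclic Surgery/Moser argument above, and only then compares Euler classes and $d_3$-invariants to match rotation numbers. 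Finally, for the integral slopes $n\leq 1$ the paper does not run a separate analysis but reduces to Theorems~\ref{minus1char} and~\ref{1characterize} via the observation that contact $n$-surgery for $n\leq -1$ is contact $(-1)$-surgery on a stabilization; your plan should be amended to route those cases the same way.
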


\begin{ex}\label{ex:noncharunknot}
Note that it is not true that any slope is characterizing for Legendrian unknots. For instance, contact $(+6)$-surgery on a Legendrian unknot with $\tb=-11$ and $\rot=0$ is contactomorphic to contact $(+6)$-surgery on a Legendrian right-handed trefoil with $\tb=-1$ and $\rot=0$.  This may be seen by noting that the corresponding smooth surgeries produce $L(5,1)$. The contact structures from both surgeries are readily seen to be overtwisted and homotopic as plane fields, thus contactomorphic. By stabilizing these knots further we get an infinite sequence of non-characterizing contact slopes.\hfill$\Box$
\end{ex}

Let us write $n\gg1$ to mean that $n$ is sufficiently large. For more general knot types, we will prove the following result:

\begin{thm}\label{thm:reciprokal}
Let $K$ in $S^3$ be a Legendrian simple knot, and $L$ in $(S^3,\xist)$ a Legendrian realization of $K$. Then the following holds:

\begin{itemize}
	
	\item[(i)] If $K$ is hyperbolic, then $(\pm1/n)$ is a contact characterizing slope of $L$ for $n\gg1$.
	
	\item[(ii)] If $\tb(L)\in \{-1, 0, 1\}$, then $(\pm1/n)$ is a contact characterizing slope of $L$ for $n\gg1$.
	
	\item[(iii)] If $K$ is the left- or right-handed trefoil, or the figure eight knot. Then $(\pm1/n)$ is a contact characterizing slope of $L$ for $n\geq3$. 
	
	\item[(iv)] In addition, we have:
	
	\begin{enumerate}
			 
		\item If $K$ is a left- or right-handed trefoil, then $-1-\tb(L)$ is a contact characterizing slope of $L$.
		
		\item If $K$ is a figure eight knot, then $1-\tb(L)$ is a contact characterizing slope of $L$.
	\end{enumerate}	

\end{itemize}
\end{thm}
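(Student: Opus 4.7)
The strategy, uniform across (i)--(iv), is to reduce to a smooth characterizing slope theorem for the underlying knot type $K$ of $L$, and then upgrade the resulting smooth identification to a Legendrian one using Legendrian simplicity of $K$ together with a contact-geometric invariant such as the $d_3$-invariant of the surgered contact structure. Assume the collection $L(r)$ is contactomorphic to $L'(r)$ for the indicated slope $r$. In (i)--(iii) the slope has the form $r=\pm 1/n$ and $L(r)$ is a single contact manifold, while in (iv) $r$ is an integer and the contactomorphism is within collections; in both situations the contactomorphism descends to a diffeomorphism of the underlying smooth $3$-manifolds, of the form $S^3_{\tb(L)+r}(K)\cong S^3_{\tb(L')+r}(K')$, where $K$ and $K'$ denote the smooth knot types of $L$ and $L'$.

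The smooth surgery slope $\tb(L)+r$ is $\tb(L)\pm 1/n$ in (i)--(iii), equals $-1$ on the trefoils in (iv.a), and equals $+1$ on the figure eight in (iv.b). To extract $K'=K$ and $\tb(L')=\tb(L)$ I invoke, in (i), Lackenby's theorem on characterizing slopes of hyperbolic knots (together with its refinements by Ni--Zhang and McCoy), which guarantees that $\tb(L)\pm 1/n$ is smoothly characterizing for the hyperbolic knot $K$ once $|n|$ is sufficiently large. In (ii), the results of McCoy and Ni--Zhang for slopes near $\{-1,0,+1\}$ apply, again for $|n|$ large. In (iii), Ozsv\'ath--Szab\'o's theorem that every non-integer slope smoothly characterizes the trefoils and figure eight applies: $\tb(L)\pm 1/n$ is non-integer whenever $n\geq 2$, and the threshold $n\geq 3$ is used only to avoid small-denominator pathologies in which more than one pair $(K',\tb(L'))$ could realize the same $3$-manifold. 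In (iv), the fact that topological $(-1)$-surgery smoothly characterizes the trefoils and topological $(+1)$-surgery smoothly characterizes the figure eight, again by Ozsv\'ath--Szab\'o, provides the required input.

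Granted $K'=K$ and $\tb(L')=\tb(L)$, Legendrian simplicity of $K$ reduces the problem to identifying $\rot(L')$. I compute the $d_3$-invariant of the contact manifold $L(r)$ via Gompf's formula, using a Weinstein trace or an explicit contact $(\pm 1)$-surgery presentation of $L(r)$; the resulting expression is polynomial in $\tb(L)$ and $\rot(L)$ with $\rot$ entering quadratically. Matching $d_3$-invariants forces $\rot(L')^2=\rot(L)^2$, and the case $\rot(L)=0$ concludes the proof immediately. For $\rot(L)\neq 0$, the residual sign ambiguity is resolved using the topological symmetries of $K$ together with the behaviour of $\rot$ under the complex-conjugation self-diffeomorphism of $(S^3,\xist)$: for the figure eight, amphichirality collapses the two signs onto a single Legendrian isotopy class; for the trefoils and in the hyperbolic case, a careful orientation analysis of the ambient contactomorphism rules out the sign-flipped alternative.

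The main obstacle is precisely this last step. The $d_3$-invariant is quadratic in $\rot$ and so determines $\rot(L')$ only up to sign; recovering the correct sign forces a case-by-case combination of Legendrian simplicity, the topological symmetries of the knot type $K$, and the orientation behaviour of the ambient contactomorphism. The smooth characterizing slope inputs are deep but are quoted as black boxes; the genuinely contact-geometric content of the argument lies in this sign resolution and in verifying that the $d_3$-comparison is sharp enough to conclude Legendrian isotopy rather than merely smooth equivalence.
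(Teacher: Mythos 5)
Your strategy for (i)--(iii) is essentially the paper's: reduce to a smooth characterizing-slope theorem (Lackenby for (i)--(ii), Ozsv\'ath--Szab\'o for (iii)), pin down $\tb(L')=\tb(L)$ from first homology --- this is exactly what the threshold $n\geq 3$ buys, via $|{\pm}1+n\tb(L)|=|{\pm}1+n\tb(L')|$ --- and then recover the rotation number from the $d_3$-invariant and conclude by Legendrian simplicity; the paper packages this as Lemma~\ref{lem:first}. However, the step you single out as ``the main obstacle,'' resolving the sign of $\rot(L')$, is handled very differently, and your proposed fix cannot work. Contact surgery is insensitive to reversing the orientation of the surgery knot, so no ``orientation analysis of the ambient contactomorphism'' can distinguish $L'$ from its orientation reverse, whose rotation number is $-\rot(L')$; for a chiral Legendrian simple knot type these are genuinely distinct oriented Legendrian knots with identical surgeries. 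The paper sidesteps this entirely by convention: Legendrian knots are unoriented throughout and $\rot$ denotes the absolute value of the rotation number, so $\rot(L')^2=\rot(L)^2$ immediately gives $\rot(L')=\rot(L)$. Your amphichirality/orientation-analysis step should be deleted and replaced by this convention (or by the observation that the statement is only true in the unoriented category).

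Part (iv) contains a genuine gap. You invoke ``topological $(-1)$-surgery characterizes the trefoils'' and ``$(+1)$-surgery characterizes the figure eight,'' but a characterizing-slope statement fixes the slope on both sides, whereas here the smooth slope on the unknown knot $K'$ is $\tb(L')-1-\tb(L)$ (resp.\ $\tb(L')+1-\tb(L)$) with $\tb(L')$ not yet determined. Comparing first homology only forces this slope to be $\pm1$, and in the wrong-sign case your cited input says nothing. What is actually needed is the stronger fact that the resulting $3$-manifold has a completely classified list of surgery descriptions over \emph{all} knots and \emph{all} slopes: the paper uses Ghiggini's theorem that the Poincar\'e homology sphere has a unique such description, and Ozsv\'ath--Szab\'o's theorem that $\Sigma(2,3,7)$ has exactly two, namely $(-1)$-surgery on the right-handed trefoil and $(+1)$-surgery on the figure eight knot. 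In the $\Sigma(2,3,7)$ case one must then still rule out that $L'$ realizes the \emph{other} description; the paper does this by comparing the homotopical invariants ($d_3$ and Euler class) of the surgered contact structures, a step absent from your proposal.
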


In Section \ref{sec:QuestionsConj}, the manuscript concludes with a few natural questions and conjectures stemming in Theorems \ref{thm:main}, \ref{Steinchar}, \ref{minus1char}, \ref{1characterize}, \ref{uknotchar}, and~\ref{thm:reciprokal} above. Section \ref{sec:QuestionsConj} can be read directly after this introduction, if the reader so desires.

\subsection*{Conventions}
Throughout this paper, we assume the reader to be familiar with Dehn surgery and contact topology on the level of~\cite{GompfStipsicz99, Geiges08}. % ~\cite{PS97,GS99,Ge08,OS04}. 
For the background on contact surgery and Kirby calculus of symplectic manifolds, in particular for details on the cancellation lemma, contact handle slides and handle cancellations, we refer to~\cite{DingGeiges09, DingGeigesStipsicz04, EtnyreKegelOnaranPre}.  %~\cite{Go98,DG01,DG04,DGS04,DG09,Ke17,Ke18,KO20,ACG+20}

We work in the smooth category. All manifolds, maps, and ancillary objects are assumed to be smooth. We assume all $3$-manifolds to be connected closed oriented, and all contact structures to be positive and co-orientable. Legendrian links in $(S^3,\xist)$ are always presented in their front projection.

In this article we consider unoriented Legendrian knots. Whenever we speak about the rotation number of a Legendrian knot, we mean the absolute value of the rotation number of the Legendrian knot with one of its orientations (the absolute value will be independent of the chosen orientation). This is necessary since the change of orientation of a Legendrian knot will in general change its isotopy class (as an oriented Legendrian knot) but its Stein traces and contact surgeries will not be affected, up to symplectic equivalence and contactomorphism, by an orientation change.  \hfill$\Box$

\subsection*{Acknowledgments}
We would like to thank Bob Gompf, Oleg Lazarev, and Lisa Piccirillo for helpful comments on an earlier draft of this paper.  We also thank the anonymous referee for many helpful comments and suggestions that have improved the paper. 
The first author is supported by the NSF CAREER Award DMS-1942363 and the Alfred P. Sloan Foundation. He is also thankful to the second author and the Georgia Institute of Technology for their hospitality during his May 2019 visit, where we first started to discuss this project. The second author thanks Lisa Piccirillo for very helpful conversations about the annulus twist and a beautiful set of lectures at the 2021 Tech Topology Summer School that informed our understanding of some of the constructions in Section~\ref{sec:other}.
The second author was partially supported by NSF grant DMS-1906414 and DMS-2203312.
The third author would like to thank the \textit{Mathematisches Forschungsinstitut Oberwolfach} where large parts of this project were carried out as a \textit{Oberwolfach Research Fellow} in August 2020.%\hfill$\Box$

%%%%%%%%%%%%%%%%%%%%%%%%%%%%%%%%%%%%%%%%%%%%%%%%%%%%%%%%%%%%%%%%%%%%%%%%%%%%%%%%%%%%%%%%%%%%%%%%%%%%%%%%%%%%%%%%%%%%%%%%%%%%%%%%%%%%%%%%%%%%%%%%%%%%%%%%%%%%%%%%%%%%%%%%

%%%%%%%%%%%%%%%%%%%%%%%%%%%%%%%%%%%%%%%%%%%%%%%%%%%%%%%%%%%%%%%%%%%%%

\section{Contact surgery and contact handle slides}
In this section, we first state known facts about contact surgeries, and computation of invariants of contact structures after contact surgery, that we use in the manuscript. Then, we discuss the notion of a contact handle slide in the context of a contact surgery with coefficient $(\pm 1/n)$. The section ends with a discussion on some contact surgeries on the standard Legendrian unknot that will be used later in the article. 

Throughout this paper we will take the standard contact structure on $\R^3$ (and $S^3$ minus a point) to be give by the kernel of $dz-y\, dx$ and with this convention, the front projection of a Legendrian link will project out the $y$-coordinate. 

\subsection{Contact surgery} Two of the homotopical invariants of a contact $3$-manifold are the Euler class of the underlying $2$-plane distribution and the $d_3$-invariant. We will need to compute these invariants for contact structures obtained via contact surgeries in Section~\ref{sec:char}. For that we will use the formulas by Gompf~\cite{Gompf98} and Ding--Geiges--Stipsicz~\cite{DingGeigesStipsicz04} (and their slight extension for more general surgery coefficients from~\cite{DurstKegel16}).

\begin{thm}\label{computInv}
		Let $(M,\xi)$ be the contact manifold obtained from $(S^3,\xist)$ by contact surgery along an oriented Legendrian link $\mathcal{L}=L_1 \cup\ldots\cup L_k$ with contact surgery coefficients $(\pm 1/n_i)$, $n_i\in \N$, of the $L_i$.
		\begin{itemize}
			\item Then, the Poincar\'e dual of the Euler class $e(\xi)\in H^2(M;\Z)$ is given by
			\[
			\operatorname{PD}\big(e(\xi)\big)=\sum_{i=1}^k n_i\rot(L_i)[\mu_i] \in H_1(M;\Z),
			\]
			where $\mu_i$ is the meridian of the component $L_i$.\\
			\item The Euler class $e(\xi)$ is torsion if and only if there exists a rational solution $b\in \Q^k$ of $Qb = \rot$, where $\rot$ is the vector of rotation numbers of $\mathcal L$ and $Q$ is the generalized linking matrix of $\mathcal L$. In this case, the $d_3$-invariant is well defined and computes as
				\[
			d_3(\xi)= \frac 14 \left(\sum_{i=1}^k  n_i b_i \rot(L_i) +(3 -n_i) \operatorname{sign}_i
			\right) - \frac 34 \sigma(Q),  
			\]
			where $\operatorname{sign}_i$ denotes the sign of the contact surgery coefficient of $L_i$ and the generalized linking matrix is
			\begin{align*}
				Q:=\begin{pmatrix}
					p_1&q_2 l_{12} &\cdots&q_n l_{1n}\\
					q_1 l_{21} & p_2&&\\
					\vdots&&\ddots\\
					q_1 l_{n1}&&& p_n
				\end{pmatrix},
			\end{align*}	
		with $p_i/q_i$ the topological surgery coefficient of $L_i$ (i.e. measured with respect to the Seifert longitude) and $l_{ij}$ the linking number of $L_i$ and $L_j$.
		\end{itemize}
	\end{thm}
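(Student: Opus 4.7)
My plan is to reduce the statement to the classical formulas of Gompf \cite{Gompf98} and Ding--Geiges--Stipsicz \cite{DingGeigesStipsicz04} for integer contact surgery, by replacing each $(\pm 1/n_i)$-surgery coefficient with $n_i$ parallel contact $(\pm 1)$-surgeries. Recall that a contact $(\pm 1/n)$-surgery on a Legendrian knot $L$ is contactomorphic to performing contact $(\pm 1)$-surgery on each of $n$ mutually parallel Legendrian pushoffs of $L$. Consequently, $(M,\xi)$ is realized as an integer contact surgery on the expanded link $\widetilde{\mathcal L} = \bigsqcup_{i,j} L_i^{(j)}$, where the $L_i^{(j)}$, $j=1,\ldots,n_i$, are Legendrian pushoffs of $L_i$ carrying a $(\pm 1)$-surgery coefficient of the same sign as that of $L_i$.

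For the Euler class, I would apply the Gompf/DGS formula to $\widetilde{\mathcal L}$ to obtain $\PD(e(\xi)) = \sum_{i,j} \rot(L_i^{(j)}) [\mu_i^{(j)}]$. Since Legendrian pushoffs preserve the rotation number and the meridians of all pushoffs of $L_i$ are homologous in $M$ to a single class $[\mu_i]$, this collapses to $\sum_i n_i \rot(L_i) [\mu_i]$, proving the first assertion.

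For the $d_3$-statement, the main work is identifying the DGS data of $\widetilde{\mathcal L}$ in terms of the original data. The generalized linking matrix $\widetilde Q$ of $\widetilde{\mathcal L}$ has a transparent block structure: within the $n_i \times n_i$ block for $L_i$ the diagonal entries are $\tb(L_i) \pm 1$ and the off-diagonal entries are $\tb(L_i)$ (since Legendrian pushoffs link with $\tb$), while between blocks for $L_i$ and $L_j$ (for $i\neq j$) every entry equals $\lk(L_i, L_j)$. A change of basis sending the first basis vector of each block to the normalized sum of all generators of that block decouples $\widetilde Q$ into an orthogonal sum of a $k\times k$ symmetric matrix congruent to the symmetrization of $Q$ from the statement, and a diagonal matrix with $\sum_i (n_i-1)$ entries equal to $\sign_i$, yielding $\sigma(\widetilde Q) = \sigma(Q) + \sum_i (n_i-1) \sign_i$. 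The same structure forces $\widetilde Q \widetilde b = \widetilde{\rot}$ to admit a block-constant solution $\widetilde b_{i,j} = b_i$ precisely when $Qb = \rot$ does, establishing the torsion criterion.

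Substituting into the DGS formula for $\widetilde{\mathcal L}$, the rotation contribution $\sum_{i,j} \widetilde b_{i,j} \rot(L_i^{(j)})$ becomes $\sum_i n_i b_i \rot(L_i)$, the $\sign$ contributions total $\sum_i 2 n_i \sign_i$ (two per pushoff), and combining with $-\tfrac 34$ times the signature correction $\sum_i (n_i-1)\sign_i$ produces the coefficient $\tfrac 14(3-n_i)$ of $\sign_i$ in the statement. The principal obstacle is the linear-algebra computation of $\sigma(\widetilde Q)$ and of the block-constant solution $\widetilde b$, following the analysis in \cite{DurstKegel16}; once these are established, the displayed formula follows by routine substitution.
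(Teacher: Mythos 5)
Your proposal is correct, but note that the paper does not actually prove Theorem~\ref{computInv}: it is quoted as a known result, attributed to Gompf and Ding--Geiges--Stipsicz for the $(\pm1)$ case and to Durst--Kegel \cite{DurstKegel16} for the extension to coefficients $\pm1/n_i$. What you have written is essentially a reconstruction of the Durst--Kegel argument, and the details check out. The reduction of $(\pm1/n)$-surgery to $(\pm1)$-surgery on $n$ parallel pushoffs is exactly the equivalence the paper records after its Cancellation Lemma; the homology relations coming from the expanded surgery link force $\mu_i^{(j)}=\mu_i^{(j')}$ in $H_1(M)$ (subtract the two relations and use that the diagonal entries differ from the off-diagonal ones by $\pm1$), so the Euler class collapses as you claim; the same subtraction shows that \emph{any} solution of $\widetilde Q\widetilde b=\widetilde{\rot}$ is automatically block-constant, which gives both directions of the torsion criterion; and the basis $v_i=\sum_j e_i^{(j)}$, $u_i^{(j)}=e_i^{(j)}-e_i^{(1)}$ orthogonally splits the symmetric linking form of the expanded link into a matrix congruent to the symmetrized $Q$ and blocks $\pm(I+J)$, giving $\sigma(\widetilde Q)=\sigma(Q)+\sum_i(n_i-1)\sign_i$. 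Substituting into $d_3=\tfrac14\bigl(c^2-3\sigma-2\chi\bigr)+q$ then yields the coefficient $\tfrac14(3-n_i)\sign_i$ exactly as you compute. The only caveat is a bookkeeping one: your tally of the sign contributions as ``$2\sign_i$ per pushoff'' already incorporates the paper's normalization of $d_3$ (shifted by $1/2$ from the cited sources so that $d_3(S^3,\xist)=0$); with the unshifted convention a residual $-\tfrac12$ survives, so you should state which convention your input formula uses. This does not affect the correctness of the argument.
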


Note that our definition of the $d_3$-invariant differs from the definition in the cited papers~\cite{Gompf98,DingGeigesStipsicz04,DurstKegel16} by $1/2$. We choose this convention as it has the $d_3$-invariants of all the contact structures on $S^3$ being integers and it is additive under connected sums.

\begin{rem} Note that the $d_3(\xi)$ invariant associated to a contact structure $\xi$, as described in Theorem \ref{computInv}, is a constant multiple of R. Gompf's $\theta(\xi)$-invariant, as introduced in \cite[Section 4]{Gompf98}. In a minor abuse of notation, this is also referred to as $d_3(\xi)$ in the literature, in line with the $d_3(\xi_1,\xi_2)$ secondary obstruction class associated to a pair $\xi_1,\xi_2$ of contact 2-plane fields. We point out that \cite[Corollary 4.6]{Gompf98} therein also deduces that two Legendrian knots $L_1,L_2$ in $(S^3,\xi_{st})$ with contactomorphic $(-1)$-surgeries must have $tb(L_1)=tb(L_2)$ and $|\rot(L_1)|=|\rot(L_2)|$ .\hfill$\Box$
\end{rem}

Throughout the article, especially in Section~\ref{sectionannulustwist}, we repeatedly use the surgery Cancellation Lemma, which reads as follows.
\begin{lem}[Ding and Geiges~\cite{DingGeiges04}]\label{cancellationlemma}
Let $(M,\xi)$ be a contact manifold, and $L$ in $(M,\xi)$ a Legendrian knot and $L'$ a (small) Reeb push-off of $L$. Then, the result of contact $(\pm 1)$-surgery on $L$ and contact $(\mp 1)$-surgery on $L'$ is contactomorphic to $(M,\xi)$.  
\end{lem}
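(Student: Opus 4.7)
The plan is to localize the problem. Since $L'$ is a small Reeb push-off of $L$, both knots lie inside a standard tubular neighborhood $N \cong S^1 \times D^2$ of $L$, and both surgeries leave $(M,\xi)$ unchanged outside of (a slight thickening of) $N$. Thus it suffices to prove that, after performing contact $(\pm 1)$-surgery on $L$ followed by contact $(\mp 1)$-surgery on $L'$, the resulting contact structure on $N$ is contactomorphic to the original one rel $\partial N$.

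First I would fix the local model. Near $L$, the contact structure is contactomorphic to $(S^1 \times \R^2, \ker(dz - y\, d\theta))$, with $L$ as the zero section and $L'$ given by $\{(\theta, 0, \varepsilon)\}$ for small $\varepsilon > 0$. In this model the contact (Reeb) framing agrees with the framing induced by $\partial N$, so $L$ and $L'$ are $\tb$-framed parallel copies with $\lk(L, L') = \tb(L)$ measured with respect to a Seifert surface inside the enlarged neighborhood. This gives explicit data to feed into the generalized linking matrix of Theorem~\ref{computInv}.

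Next I would carry out the topological cancellation. Using the contact Kirby calculus of Ding--Geiges--Stipsicz, one can slide $L'$ over $L$ (or apply a slam-dunk), converting the pair of contact $(\pm 1)$ and $(\mp 1)$ surgeries on isotopic curves into an empty diagram: the framing on $L'$ becomes infinite, so $L'$ bounds a meridian disk and can be blown down, and the remaining surgery on $L$ then has trivial effect. Topologically this exhibits the reglued neighborhood $N$ as a solid torus with its original boundary framing.

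The main obstacle, and the genuinely contact-topological content, is identifying the \emph{contact} structure on this reglued solid torus with the original one. For this I would invoke the Giroux--Honda classification of tight contact structures on $S^1 \times D^2$ with fixed convex boundary: there is a unique tight extension over $N$ whose boundary matches the standard Legendrian neighborhood of $L$ and whose dividing data agrees with what comes out of the two contact surgeries. One then checks tightness of the constructed extension either from the explicit contact surgery model (using that contact $(+1)$ surgery is inverse to Legendrian surgery on a push-off by construction of the model handles in Weinstein's framework) or by exhibiting a Stein/Weinstein cobordism realizing the cancellation at the $4$-dimensional level. Either verification identifies the extension with the standard tight neighborhood of $L$, completing the proof.
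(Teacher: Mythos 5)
The paper does not actually prove this lemma; it is imported from Ding--Geiges~\cite{DingGeiges04}, so your outline has to be measured against the cited proof rather than anything in the text. Your overall strategy --- localize to a standard neighborhood $N$ of $L$ containing the Reeb push-off $L'$, observe that the two surgeries form a topologically cancelling pair so that $N$ is reglued trivially, and then pin down the contact structure on the reglued solid torus using the Giroux--Honda uniqueness of the tight extension with the standard-neighborhood boundary data --- is the right one and agrees in outline with the cited argument. (One cosmetic slip: after sliding one component over the other you obtain a $0$-framed meridian, i.e.\ a cancelling Hopf pair, not an ``infinite framing'' to be blown down; the topological conclusion is the same.)

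The genuine gap is the step you defer to the last sentence: proving that the contact structure induced on the reglued solid torus is \emph{tight}. Without tightness, the Giroux--Honda uniqueness statement gives you nothing, and tightness is not automatic here: the reglued torus is a union of the two surgery solid tori and an intermediate $T^2\times[0,1]$ layer, each tight, but a union of tight pieces glued along convex tori can perfectly well be overtwisted. Neither of your two proposed verifications closes this. Appealing to ``contact $(+1)$-surgery is inverse to Legendrian surgery on a push-off by construction'' is circular --- that inverse relationship \emph{is} the statement being proved. And a Stein/Weinstein cobordism argument cannot do the job: contact $(+1)$-surgery is not a Weinstein handle attachment, a symplectic cobordism does not certify tightness of its concave end, and --- decisively --- the lemma is asserted for an arbitrary ambient $(M,\xi)$, which may be overtwisted and non-fillable, so the required tightness is strictly a local statement about the solid torus rel boundary and cannot be extracted from any global filling of the surgered manifold. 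What is needed, and where the real content of the cited proof lies, is a direct local identification of the glued-up solid torus with a standard tight model (for instance by exhibiting the two surgery tori and the intermediate layer inside an explicit tight neighborhood $S^1\times\R^2$ and matching dividing slopes). Until that local tightness is supplied, the proof is incomplete.
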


The contact $(\pm 1/n)$-surgery on $L$ is equivalent to contact $(\pm 1)$-surgery on $L$ and $n-1$ Legendrian push-offs of $L$. Hence, Lemma \ref{cancellationlemma} also shows that contact $(\pm 1/n)$-surgery on $L$ and contact $(\mp 1/n)$-surgery on its Reeb push-off cancel as well.

\subsection{Contact handle slides}
In Section \ref{sectionannulustwist}, we need a version of contact handle slides for contact $(\pm 1/n)$-surgeries on Legendrian knots, which were obtained earlier for contact $(\pm1)$-surgeries in \cite{DingGeiges09}, using convex surface theory, and later generalized in~\cite{Avdek13}. We generalize these handle slides to contact $(\pm 1/n)$-surgeries. 

\begin{figure}[htbp]{\small
  \begin{overpic}%[grid,tics=10] 
  {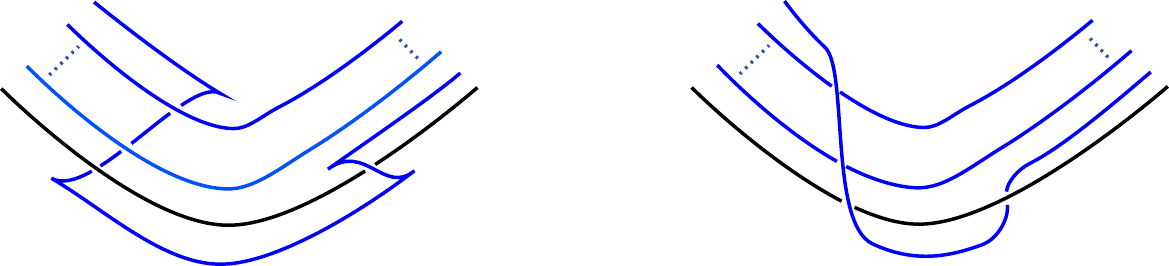}
     \put(0, 35){$L$}
      \put(200, 35){$L$}
      \put(50, 63){\color{blue} $J$}
      \put(243, 63){\color{blue}$J$}
      \put(130, 35){$(-1/n)$}
      \put(320, 28){$(+1/n)$}
  \end{overpic}}
  \caption{The Legendrian knot $J$ consists of $n$ push-offs of $L$ away from the shown segment. We call $J$ the $(\pm1,n)$-Legendrian cable of $L$.}
  \label{fig:tb-1}
\end{figure}

We begin by introducing the notation of a $(\pm1,n)$-Legendrian cable $J$ along a Legendrian knot $L$. This is the Legendrian knot $J$ depicted in Figure \ref{fig:tb-1}: it is obtained by considering the Reeb $n$-copy of $L$ and inserting one of the patterns in Figure \ref{fig:tb-1}. For the $(-1,n)$-Legendrian cable, we use the pattern depicted on the left, and for the $(1,n)$-Legendrian cable, we use the pattern depicted on the right. The first row of Figure~\ref{fig:cable_welldef} shows that the pattern in Figure \ref{fig:tb-1} (left), for the $(-1,n)$-cable, exactly reverses as we go through a cusp. That said, the second and third rows of Figure~\ref{fig:cable_welldef} illustrate that the reversed pattern is Legendrian isotopic to the original one. Therefore this front description of the cabling operation is well-defined.%We remark that the Legendrian $(\pm1,n)$-cable of $L$ smoothly represents the curve $\pm\mu+n\lambda$ on the boundary of a tubular neighborhood of $L$ and is thus independent of the orientation of $L$. 

\begin{rem} For the sake of clarity, we explain the isotopy in the second and third rows of Figure~\ref{fig:cable_welldef} in more detail here. In the $1$-jet space, after one has pulled the leftmost right cusp up (2nd row, first to second diagrams), the rest of the isotopy can be described as iterating the following two steps. First, we take the leftmost crossings in the strand piece to the left of the leftmost right cusp (these crossings are circled in pink in the figure), and then we move these crossings to the left around $S^1$ until they appear on the right (2nd row, second to third diagrams, these crossings still in pink). Second, use these crossings that appear on the right to pull the rightmost left cusp down as much as possible (2nd row, third to fourth diagrams). Then we move the remaining crossings, after pulling the rightmost left cusp down, to the left. (These crossing are now depicted in green in the fourth diagram of the 2nd row. These crossing are, in a sense, all the previous pink crossings minus the top one.) Once we have performed these two steps, we iterate this process: move the new crossings on the left more to the left around the $S^1$ until they appear on the right and use their appearance on the right to pull the rightmost left cusp down a bit more. Then iterate this process. In every iteration, the number of crossings we move around decreases exactly by one, thus the process ends. This explains the second and third rows of Figure~\ref{fig:cable_welldef}.
	
Independently of this argument in the $1$-jet space of $S^1$, this isotopy can also be explicitly seen after the cabling occurs, in the cabled front, by performing Legendrian Reidemeister moves near cusps and crossings. In either case, this shows that the isotopy class of a Legendrian cable is independent of the position where we introduce the pattern of Figure~\ref{fig:tb-1}. This works analogously for the $(1,n)$-Legendrian cable.\hfill$\Box$
\end{rem}

The statement for contact handle slides in these general surgeries reads as follows.

\begin{figure}[htbp]{\small
		\begin{overpic}[width=\textwidth]
			{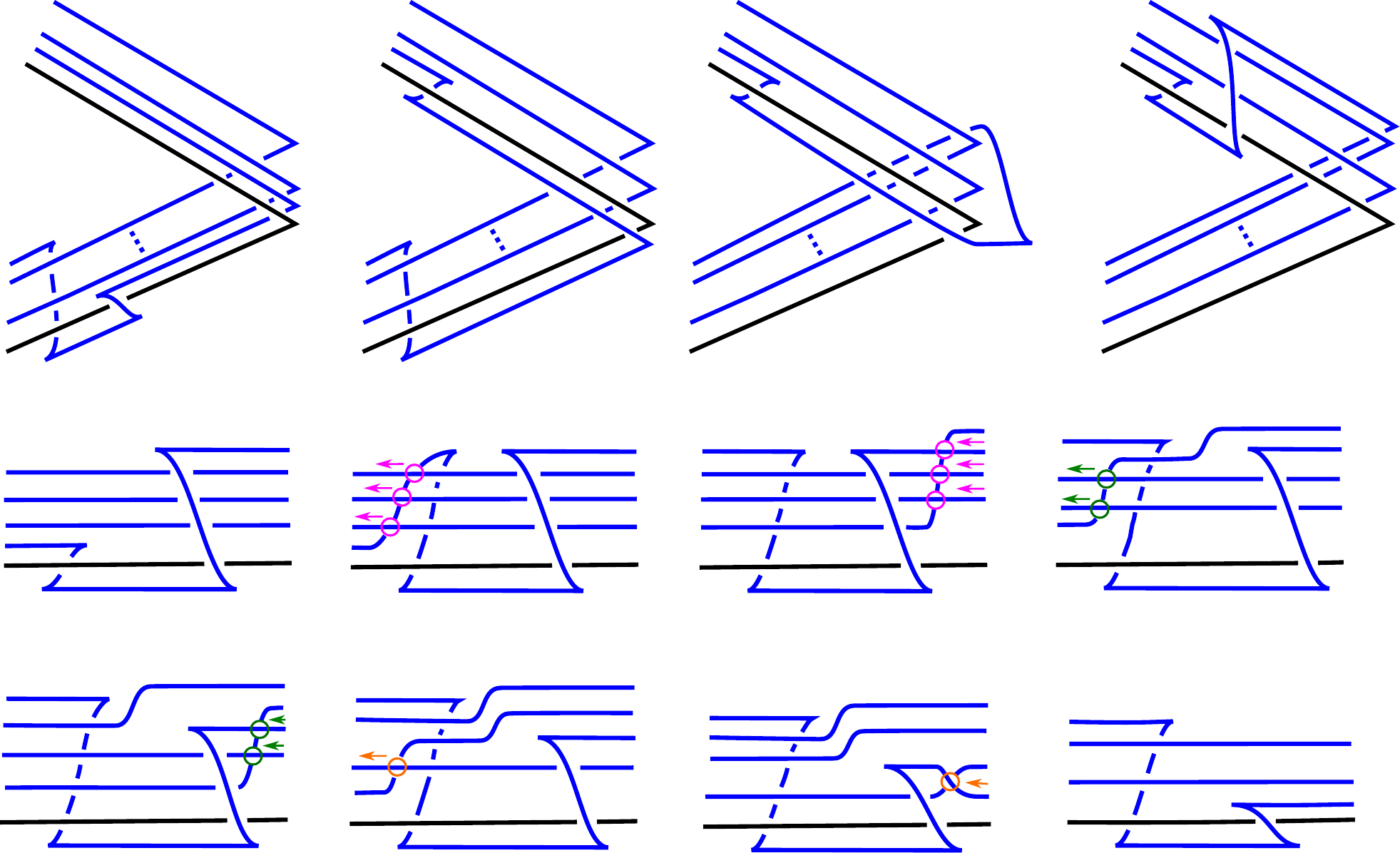}
			\put(110, 210){$\cong$}
			\put(210, 210){$\cong$}
			\put(330, 210){$\cong$}
			\put(96, 105){$\cong$}
			\put(208, 105){$\cong$}
			\put(319, 105){$\cong$}
			\put(430, 105){$\cong$}
			\put(96, 25){$\cong$}
			\put(208, 25){$\cong$}
			\put(319, 25){$\cong$}
	\end{overpic}}
	\caption{(First row) A sequence of Legendrian Reidemeister moves that moves the pattern of a $(-1,n)$-Legendrian cable through a right cusp. It works similar for a left cusp. This isotopy reflects the pattern.	(Second and third rows) A sequence of Legendrian Reidemeister moves that maps the pattern of a $(-1,n)$-Legendrian cable to its reflected pattern, thus proving that they are Legendrian isotopic. This isotopy in the second and third rows is shown entirely in the $1$-jet space of $S^1$: since the satellite operation identifies a neighborhood of the companion knot with this $1$-jet space, this argument establishes that the two diagrams for the cable are also isotopic.}
	\label{fig:cable_welldef}
\end{figure}

\begin{lem}[Contact handle slides]\label{lem:contactHandleSlide} 
	Let $L$ in $(S^3,\xist)$ be a Legendrian knot along which we perform a contact $(\pm1/n)$-surgery, $J$ its $(\pm1,n)$-Legendrian cable, and $K$ a Legendrian knot in the exterior of $L$. 
	
	Then, the Legendrian knot $K$ seen as a knot in the surgered manifold $L(\pm1/n)$ is isotopic to the Legendrian connected sum $K\#J$\footnote{Here our notion of connected sum is an internal one happening in the surgered manifold. We will explain this in detail in the proof and also argue why $J$ represents a standard Legendrian unknot in $L(\pm1/n)$ and thus the connected sum is well-defined.}. Figure~\ref{fig:contactHandleSlide1} depicts local models for fronts for $K\#J$. In addition, if $K$ initially comes equipped with a contact surgery coefficient $r\in\Q\setminus\{0\}$, then the contact surgery coefficient $r$ is not changing under the isotopy, i.e.\ the contact surgery coefficient of $K\#J$ in $L(\pm1/n)$ is again $r$.
\end{lem}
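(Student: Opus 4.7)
The plan is to reduce Lemma~\ref{lem:contactHandleSlide} to the Ding--Geiges contact handle slide for $(\pm 1)$-surgery via the $n$-copy trick. Recall that contact $(\pm 1/n)$-surgery on $L$ is equivalent to simultaneously performing contact $(\pm 1)$-surgery on $L = L_0$ together with $n-1$ Reeb push-offs $L_1, \ldots, L_{n-1}$. The strategy is not to perform a single handle slide but to slide $K$ successively over each $L_i$ and accumulate the resulting contributions into the front projection of $K$.

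For the base case $n = 1$, the statement is exactly the Ding--Geiges contact handle slide: inside $L(\pm 1)$, the knot $K$ becomes Legendrian isotopic to the connected sum of $K$ with a Reeb push-off of $L$ carrying the single-strand pattern of Figure~\ref{fig:tb-1}, which is the $(\pm 1, 1)$-cable. For the inductive step, sliding $K$ over $L_0$ produces $K$ connect-summed with one strand of the cable pattern, sliding the result over $L_1$ adds a second parallel strand, and iterating over $L_2, \ldots, L_{n-1}$ stacks up all $n$ strands. The combinatorial content to check is that the crossings and parallel strands produced by these $n$ successive slides assemble exactly into the $(\pm 1, n)$-cable pattern of Figure~\ref{fig:tb-1}; the Legendrian Reidemeister moves in Figure~\ref{fig:cable_welldef} reduce this to a finite local check and simultaneously justify that the final front is independent of where along $L$ the pattern is inserted.

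For the preservation of the contact surgery coefficient $r$ on $K$, the key observation is that $J$ bounds a Legendrian meridional disk in the solid torus glued in by the surgery, so it represents a standard Legendrian unknot with $\tb(J) = -1$ in $L(\pm 1/n)$; this also justifies the well-definedness of the internal Legendrian connected sum. By additivity of the Thurston--Bennequin invariant under Legendrian connected sum, $\tb(K \# J) = \tb(K) + \tb(J) + 1 = \tb(K)$, so the contact framing of $K$ is unchanged. Combined with the standard Kirby computation showing that the topological framing of $K$ shifts in exactly the matching way under slides over $(\pm 1/n)$-framed companions, the coefficient $r$ is preserved.

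The main difficulty will be the combinatorial step in the second paragraph: tracking the placement of crossings after each slide, accounting for how the cable pattern reverses across cusps (first row of Figure~\ref{fig:cable_welldef}), and matching the final accumulated front to the cable pattern on the nose rather than only up to isotopy. The flexibility afforded by Figure~\ref{fig:cable_welldef} should make this a bounded local check, but the careful bookkeeping of strands and crossings across the iterated slides is the real work of the proof.
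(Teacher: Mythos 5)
Your route is genuinely different from the paper's, and it is one the paper itself acknowledges as viable (see item (4) of the remark following the lemma, which notes that the slide over a $(\pm 1/n)$-framed knot can be expressed as an $n$-fold sequence of slides over $(\pm1)$-framed push-offs). But the paper's actual proof avoids iterated slides entirely, and in doing so sidesteps exactly the step you flag as the ``real work.'' The paper observes that $J$ can be isotoped onto $\partial(\nu L)$ where it represents the curve $\pm\mu+n\lambda_C$, i.e.\ the meridian of the solid torus glued in by the genuine $(\pm1/n)$-surgery; hence $J$ bounds a meridional disk in $L(\pm1/n)$, is a standard Legendrian unknot with $\tb(J)=-1$ (computed by making $J$ a ruling curve meeting the dividing set of $\partial(\nu L)$ twice), and is unlinked from $K$. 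Consequently $K\#J$ is Legendrian isotopic to $K$ \emph{for free}, and the only remaining content is the purely diagrammatic verification that the local models of Figure~\ref{fig:contactHandleSlide1} really depict an internal connected sum of $K$ with $J$ --- which is done by exhibiting the three incarnations of the connected sum in Figure~\ref{fig:ConnectedSum} and the isotopies of Figures~\ref{fig:move3} and~\ref{fig:move4}. In your version, by contrast, the claim that $n$ successive connected sums with the $(\pm1,1)$-cables of the $n$ push-offs $L_0,\dots,L_{n-1}$ assemble, on the nose, into the single $(\pm1,n)$-cable pattern is a nontrivial front-diagram computation that you defer rather than carry out; if you pursue this route you must actually do that bookkeeping (including the clasp placements and the cusp-reversal of the pattern), whereas the meridional-disk argument makes it unnecessary. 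Two smaller points: the preservation of the coefficient $r$ follows immediately from the fact that the whole operation is a Legendrian isotopy in $L(\pm1/n)$ and $r$ is measured against the contact framing, so your detour through $\tb$-additivity and a separate topological-framing computation, while not wrong, is more than is needed; and your ``base case'' appeal to Ding--Geiges is fine but unnecessary, since the meridional-disk argument proves the $n=1$ case and the general case uniformly.
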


\begin{figure}[htbp]{\small
  \begin{overpic}%[grid,tics=10] 
  {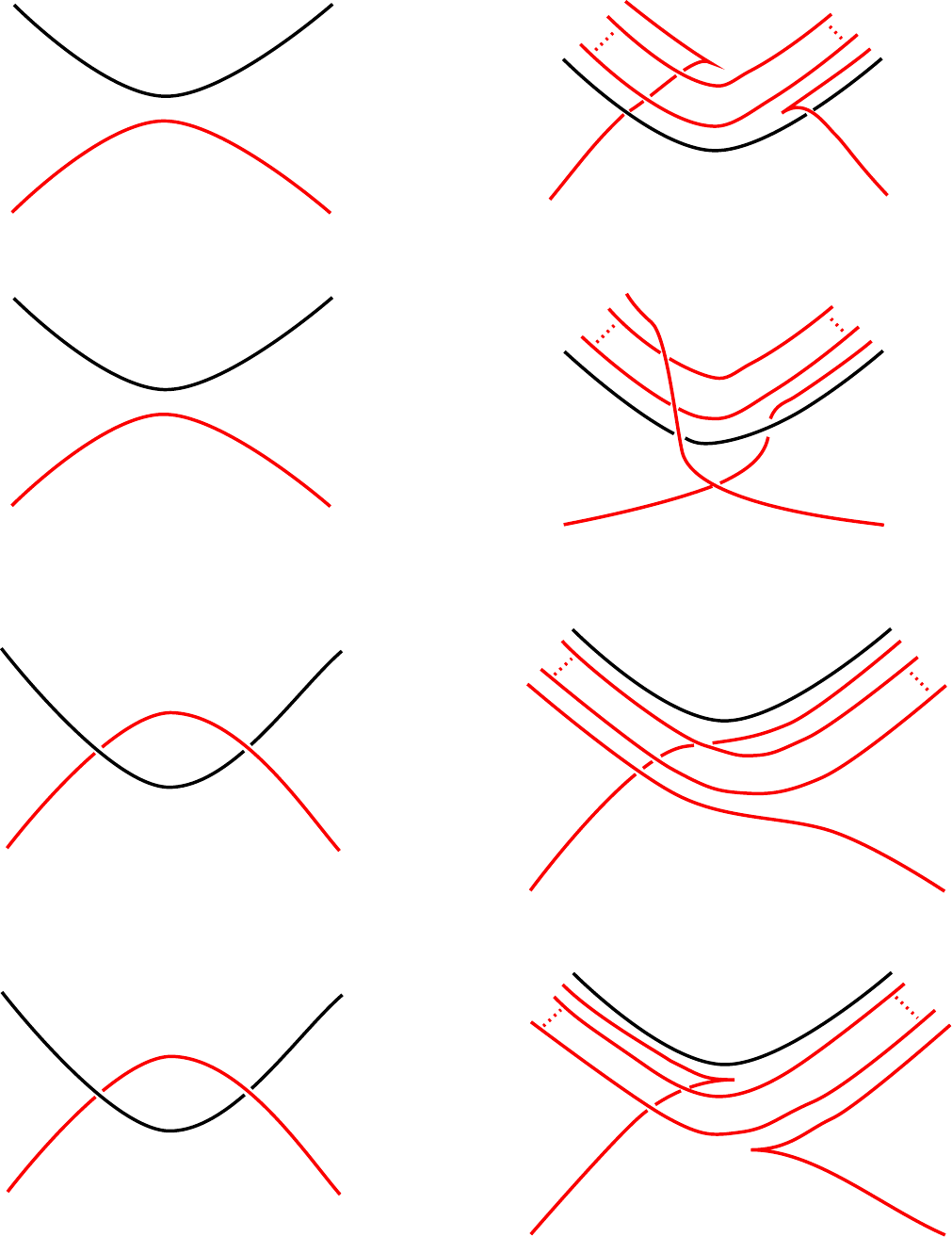}
     \put(-15, 360){$(-1/n)$}
      \put(-15, 269){$(+1/n)$}
      \put(-15, 155){$(-1/n)$}
      \put(-15, 50){$(+1/n)$}
      \put(206, 321){$(-1/n)$}
      \put(172, 240){$(+1/n)$}
      \put(205, 171){$(-1/n)$}
      \put(205, 65){$(+1/n)$}
      \put(132, 340){$\cong$}
      \put(132, 250){$\cong$}
      \put(132, 143){$\cong$}
      \put(132, 38){$\cong$}
  \end{overpic}}
  \caption{Local models for the contact handle slides in the framework of contact $(\pm1/n)$-surgeries. These appear in the statement of Lemma \ref{lem:contactHandleSlide}. Here we write $\cong$ for isotopy of Legendrian knots.}
  \label{fig:contactHandleSlide1}
\end{figure}

\begin{proof}
	First, we show that $J$ represents a standard Legendrian unknot, with $\tb(J)=-1$, in the surgered manifold $L(\pm1/n)$. For that, notice that $J$ can be isotoped to lie on the boundary of a tubular neighborhood $\partial(\nu L)$ of $L$ and represents there the curve $\pm\mu+n\lambda_C$, where $\lambda_C$ represents the contact longitude of $L$. Therefore, in the contact surgered manifold $L(\pm1/n)$, the knot $J$ bounds a meridional disk of the newly glued-in solid torus. We may isotop the boundary of $\nu L$ so that $J$ is a ruling curve that intersects the dividing curves of $\partial (\nu L)$ two times. Thus the twisting relative to $\partial (\nu L)$, which is the same as the twisting relative to the meridional disk, is $-1$. So we see that Thurston-Bennequin invariant of $J$ is $-1$  and thus represents the maximal Thurston-Bennequin invariant Legendrian unknot in $L(\pm1/n)$.
	
The statement of Lemma \ref{lem:contactHandleSlide} is deduced as follows. Given a Legendrian knot $K$ in the exterior of $L$, $K$ can be considered as a Legendrian knot in the surgered manifold $L(\pm1/n)$. Since $J$ represents a Legendrian unknot with $\tb=-1$ in the surgered manifold, the isotopy type of $K$ will not change under taking a Legendrian connected sum with $J$. Here we think of the connected sum as an internal operation. We take a Legendrian band in $L(\pm1/n)$ connecting points on $K$ and $J$. By surgering $K$ and $J$ along that band we get the connected sum as shown in Figure~\ref{fig:ConnectedSum}. Since $J$ is a standard unknot unlinked from $K$ it follows that the isotopy type of $K\#J$ is independent of the choice of the band. Note that although $J$ and $K$ look linked in the surgery diagram $J$ is isotopic to the meridional disk of the newly glued-in solid torus in $L(\pm1/n)$ and thus is unlinked from $K$.

Now, to argue that Figure~\ref{fig:contactHandleSlide1} indeed represents the connected sum $K\#J$, we note that the Legendrian connected sum can come in different incarnations in the front projections. Figure~\ref{fig:ConnectedSum} depicts three different possibilities. The first move in Figure~\ref{fig:contactHandleSlide1} follows directly from the third version of the connected sum, and the second move follows from the second version of the connected sum. For moves three and four, we first modify $J$ by Legendrian isotopies as shown in Figure~\ref{fig:move3} and~\ref{fig:move4}. Then we observe that a part of $J$ is parallel to the part of $K$ shown in Figure~\ref{fig:contactHandleSlide1} on the left and thus perform the third version of the Legendrian connected sum from Figure~\ref{fig:ConnectedSum} yields the claimed diagrams on the right of Figure~\ref{fig:contactHandleSlide1}.
\end{proof}

	\begin{figure}[htbp]{\small
  \begin{overpic}%[grid,tics=10] 
  {figs/Connectedsum.pdf}
     \put(17.5, 160){$J$}
      \put(114, 160){$K$}
      \put(206, 160){$J$}
      \put(304, 160){$K$}
      \put(16, 100){$J$}
      \put(16, 16){$K$}
      \put(114, 100){$J$}
      \put(114, 16){$K$}
      \put(205, 100){$J$}
      \put(205, 16){$K$}
      \put(304, 100){$J$}
      \put(304, 16){$K$}
      \put(66, 162){$\#$}
      \put(16, 58){$\#$}
      \put(205, 55.5){$\#$}
      \put(158, 160){$=$}
      \put(65, 60){$=$}
      \put(255, 60){$=$}
  \end{overpic}}
  \caption{Three different incarnations of the Legendrian connected sum.}
  \label{fig:ConnectedSum}
\end{figure}

\begin{figure}[htbp]{\small
  \begin{overpic}%[grid,tics=10] 
  {}
     \put(15, 185){$L$}
      \put(82, 185){$(-1/n)$}
      \put(56, 170){\color{blue} $J$}
      \put(197, 185){$L$}
      \put(261, 185){$(-1/n)$}
      \put(238, 170){\color{blue} $J$}
      \put(150, 158){$\cong$}
      \put(145, 110){$\cong$}
     \put(15, 77){$L$}
      \put(82, 77){$(-1/n)$}
      \put(100, 10){\color{blue} $J$}
      \put(197, 77){$L$}
      \put(265, 77){$(-1/n)$}
      \put(290, 10){\color{blue} $J$}     
         \put(150, 43){$\cong$}
  \end{overpic}}
  \caption{Performing a Legendrian connected sum of the Legendrian knot $K$ with the Legendrian cable $J$. This yields the third move from Figure~\ref{fig:contactHandleSlide1}.}
  \label{fig:move3}
\end{figure}
\begin{figure}[htbp]
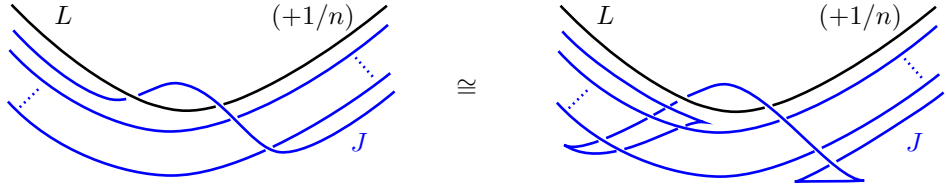
{\small
  \begin{overpic}%[grid,tics=10] 
  {figs/move4}
     \put(18, 60){$L$}
      \put(100, 60){$(+1/n)$}
      \put(130, 12){\color{blue} $J$}
      \put(223, 60){$L$}
      \put(307, 60){$(+1/n)$}
      \put(340, 12){\color{blue} $J$}
      \put(170, 33){$\cong$}
  \end{overpic}}
  \caption{Performing a connected sum of $K$ with $J$ yields the last move from Figure~\ref{fig:contactHandleSlide1}.}
  \label{fig:move4}
\end{figure}

For the record, the name {\it handle slide} might be a bit misleading because the rational surgery does {\it not} correspond to a $4$-dimensional handle attachment. However, we are isotoping the knot $K$ over the meridional disk of the newly glued-in solid torus corresponding to the surgery knot $L$. Since the same is happening if one does an actual handle slide, the operation at hand can be seen as a generalization of a handle slide, and thus the terminology.

\begin{rem} A few minor comments might be in order:
	
\begin{enumerate}
	\item There are other ways to perform the connected sum $K\#J$ in a front projection: another example is shown in~\cite{DingGeiges09}, where the connected sum is performed using the first incarnation from Figure~\ref{fig:ConnectedSum}.	
	\item We observe that there is a contactomorphism of the standard contact structure on $\R^3$ that is given in the front projection by a reflection on the horizontal axis and thus we can also reflect the front projections from Figures~\ref{fig:contactHandleSlide1} along the horizontal axis and obtain more front versions of contact handle slides. If we choose our model of the standard contact structure to be $(\R^3,\ker dz - y\, dx)$ such a contactomorphism is given by $(x,y,z)\mapsto(x,-y,-z)$.
	\item Note that a contact handle slide for general integer surgery is not possible. Indeed, the $(\pm p,q)$-Legendrian cable of $L$ represents a $\tb=-p$ unknot in $L(\pm p/q)$.	
	\item It is also possible to express the contact handle slides in Lemma \ref{lem:contactHandleSlide}, over a Legendrian knot with coefficient $\pm 1/n$, as an $n$-fold sequence of handle slides over $(\pm1)$-framed Legendrian push-offs of the surgery knot.\hfill$\Box$
\end{enumerate}
\end{rem}

\subsection{Contact surgeries on the unknot}
Let $U$ in $(S^3,\xist)$ be the standard Legendrian unknot, with $\tb(U)=-1$. In the proof of Theorem~\ref{thm:main}, and in Section~\ref{starmove}, we use contact $(1+1/n)$-surgeries on $U$ in $(S^3,\xi_{st})$ for $n>0$. In the following result, we record the possible resulting contact structures and the way certain Legendrian knots can be presented in the surgery.

\begin{thm}\label{surgerytoS3}
Let $U$ in $(S^3,\xist)$ be the standard Legendrian unknot, and let $(U(1+1/n),\xi_k)$, $k\in \{-n, -n+2, \ldots, n\}$, denote the $(n+1)$ $($possibly$)$ distinct contact $(1+1/n)$-surgeries $\xi_k$ along $U$ in $(S^3,\xist)$.

Then, the contact manifold $(U(1+1/n),\xi_k)$ is contactomorphic to $(S^3,\xist)$, for any $k\in \{-n, -n+2, \ldots, n\}$. In addition, if $L$ is a Legendrian link in the exterior of $U$ linking $U$ as shown on the left of Figure~\ref{fig:speicalsurg}, then under the contactomorphism from $U(1+1/n)$ to $(S^3,\xist)$, $L$ changes as indicated on the right of Figure~\ref{fig:speicalsurg}. 
\end{thm}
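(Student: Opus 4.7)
The plan is to realize the surgery as a contact $(\pm 1)$-surgery diagram, compute the homotopy invariants via Theorem~\ref{computInv}, and then argue tightness so as to identify the resulting contact manifold with $(S^3,\xist)$.

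First I would identify the underlying smooth manifold: since $\tb(U) = -1$, the topological surgery coefficient is $\tb(U) + (1+1/n) = 1/n$, and classical Rolfsen calculus gives that $1/n$-surgery on the unknot in $S^3$ yields $S^3$. For each of the $n+1$ tight contact structures $\xi_k$ on the glued-in solid torus, I would then use the Ding--Geiges algorithm to replace the contact $((n+1)/n)$-surgery on $U$ by the combination of contact $(+1)$-surgery on $U$ and contact $(-(n+1))$-surgery on a Reeb push-off $U'$ (applying the identity $r/(1-r) = -(n+1)$ for $r = 1+1/n$), and to further realize the latter as contact $(-1)$-surgery on the $n$-fold stabilization $\tilde U = S^{\epsilon_1}\cdots S^{\epsilon_n}(U')$ with $\epsilon_i \in \{+,-\}$. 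The $n+1$ possible rotation values $k = \sum_i \epsilon_i \in \{-n,-n+2,\ldots,n\}$ then correspond bijectively to the $n+1$ structures $\xi_k$.

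With this presentation at hand, I would apply Theorem~\ref{computInv}: the Euler class is trivial because $H^2(S^3) = 0$, and the generalized linking matrix
\[ Q = \begin{pmatrix} 0 & -1 \\ -1 & -(n+2) \end{pmatrix} \]
satisfies $\det Q = -1$ and $\sigma(Q) = 0$. Solving $Q b = (0,k)^T$ yields $b = (-k,0)^T$; substituting into the $d_3$-formula, the two components contribute $(3-1)(+1) = 2$ and $(3-1)(-1) = -2$ respectively to the sum, so $d_3(\xi_k) = \tfrac{1}{4}(2 - 2) - 0 = 0$, matching $d_3(\xist)$ in this paper's convention.

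Finally, since on $S^3$ the vanishing of $d_3$ alone does not distinguish $\xist$ from the overtwisted contact structure with the same $d_3$, I would establish tightness of each $\xi_k$ by explicitly reducing the contact $(\pm 1)$-diagram above to the empty one using Legendrian isotopies, the contact handle slides of Lemma~\ref{lem:contactHandleSlide}, and the Cancellation Lemma~\ref{cancellationlemma}; the uniqueness of the tight contact structure on $S^3$ then forces $(U(1+1/n),\xi_k) \cong (S^3,\xist)$. The same sequence of moves realizes the desired contactomorphism, and tracking a Legendrian link $L$ in the exterior of $U$ through these moves via Lemma~\ref{lem:contactHandleSlide}---each slide over a surgery knot inserting the appropriate Legendrian cable pattern in the front projection---yields the right-hand side of Figure~\ref{fig:speicalsurg}. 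The main obstacle will be the tightness step, since without an explicit simplification the vanishing of $d_3$ leaves open the overtwisted possibility; tracking $L$ is then a direct bookkeeping exercise in front projections.
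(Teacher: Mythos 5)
Your first step---replacing contact $(1+1/n)$-surgery on $U$ by contact $(+1)$-surgery on $U$ together with contact $(-1)$-surgery on an $n$-fold stabilization of its push-off, with the $n+1$ choices of stabilization signs accounting for the structures $\xi_k$---is exactly how the paper's proof begins. Your $d_3$ computation is also correct, but, as you yourself observe, it is inconclusive (an overtwisted structure on $S^3$ has $d_3=0$ as well), and once tightness is established it is superfluous, since $S^3$ carries a unique tight contact structure; so this part of the proposal does no real work.

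The genuine gap is the tightness step. You propose to ``explicitly reduce the contact $(\pm1)$-diagram to the empty one'' using Legendrian isotopies, the handle slides of Lemma~\ref{lem:contactHandleSlide}, and Lemma~\ref{cancellationlemma}, but you never exhibit such a reduction, and it is not clear that one is available from the listed moves: the $(-1)$-framed curve is a \emph{stabilized} push-off of the $(+1)$-framed $U$, so the Cancellation Lemma does not apply to the pair, there is no destabilization move for surgery curves, and handle-sliding one surgery curve over the other changes its knot type and framing with no a priori guarantee of terminating in the empty diagram. The paper closes this step with a soft argument instead: $U(+1)$ is the Stein fillable contact structure on $S^1\times S^2$, and contact $(-1)$-surgery preserves fillability, so each $\xi_k$ is a fillable---hence tight---contact structure on the smooth manifold $S^3$, and must be $\xist$ by uniqueness of the tight structure. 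With that in hand, the second half of the statement is obtained, as you suggest, by sliding $L$ over the surgery curves via Lemma~\ref{lem:contactHandleSlide} and then erasing the two unknots; note that the erasure is justified precisely because the surgery on them is already known to return $(S^3,\xist)$, whereas in your version this final cancellation would presuppose the identification you are trying to prove.
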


\begin{proof}
	Contact $(1+1/n)$-surgery for $n>0$ along a Legendrian knot is contactomorphic to contact $(+1)$-surgery along the same knot followed by a contact $(-1)$-surgery along its $n$-fold stabilization. The $(n+1)$ contact structures $\xi_k$ correspond to the different possible choices of the $n$-fold stabilizations~\cite{DingGeigesStipsicz04}. That $(U(1+1/n),\xi_k)$ are all contactomorphic to $(S^3,\xist)$ follows now from the fact that $U(+1)$ is the fillable contact structure on $S^1\times S^2$ and that contact $(-1)$-surgery preserves fillability. Hence we get a tight contact structure on $S^3$, which must be the standard tight contact structure. 
	
	To see how the Legendrian link $L$ in the exterior of $U$ changes under this contactomorphism, we perform handle slides (the first move from Figure~\ref{fig:contactHandleSlide1} for $n=1$, cf.~\cite{DingGeiges09}) as shown in the middle of Figure~\ref{fig:speicalsurg}. The last figure is obtained by an isotopy and by canceling the unknots.
\end{proof}

\begin{figure}[htbp] 
{\small
 \begin{overpic}%[grid,tics=10] 
  {}
     \put(60, 0){$(1+\frac1n)$}
      \put(173, 0){$(+1)$}
      \put(187, 53){\color{blue} $(-1)$}
      \put(151, 66){$n$}
      \put(287, 0){$(+1)$}
      \put(305, 50){\color{blue} $(-1)$}
       \put(265, 66){$n$}
        \put(375, 33){$n$}
      \put(90, 25){$\cong$}
	\put(210, 25){$\cong$}
	\put(320, 25){$\cong$}
  \end{overpic}}
	\caption{On the left, the unknot $U$ in black and a linking Legendrian link $L$ in red. On the right, the image of $L$ under contact $(1+1/n)$-surgery on $U$. The box labeled by $n$ is our notation for an $n$-fold stabilization of the knot running through that box, see Figure~\ref{fig:def_box}. The signs of the stabilizations are determined by the precise contact structure on $U(1+1/n)$.}
	\label{fig:speicalsurg}
\end{figure}

\begin{figure}[htbp] 
	{\small
		\begin{overpic}%[grid,tics=10] 
			{}
			\put(20, 14){$n$}
			\put(80, 13){$n-1$}
			\put(123, 13){$1$}
			\put(219, 13){$1$}
			\put(54, 15){$=$}
			\put(251, 15){$=$}
			\put(329, 12){or}
	\end{overpic}}
	\caption{The definition of the box labeled $n$, which is for example used in Figure~\ref{fig:speicalsurg}.}
	\label{fig:def_box}
\end{figure}

\begin{rem}\label{effectstab} In the proof of Theorem \ref{thm:main} we use that Theorem~\ref{surgerytoS3} implies that any $n$-fold stabilization of a Legendrian knot can be affected by some contact $(1+1/n)$-surgery on its meridian.\hfill$\Box$
\end{rem}

%%%%%%%%%%%%%%%%%%%%%%%%%%%%%%%%%%%%%%%%%%%%%%
\section{Contact annulus presentations}\label{sectionannulustwist}
%%%%%%%%%%%%%%%%%%%%%%%%%%%%%%%%%%%%%%%%%%%%%%

In this section we prove Theorem \ref{thm:main} and, in particular, develop the notion of a contact annulus twist and pre-Lagrangian annulus presentations. The statements in Theorem \ref{thm:main} are then proven by applying these constructions and an extension result for contactomorphisms. The section is organized in parallel to the developments in the smooth category, as follows.

%%%%%%%%%%%%%%%%%%%%%%%%%%%%%%%%%%%%%%%%%%%%%%
%\subsection{References in the smooth setting}
%%%%%%%%%%%%%%%%%%%%%%%%%%%%%%%%%%%%%%%%%%%%%%
In the smooth context, the first infinite family of knots in $S^3$ sharing the same $0$-surgery were created in \cite{Osoinach06}. In that paper, Osoinach introduced an annulus twist to construct these examples. First, we develop that construction in the contact topological setting in Subsection~\ref{annulustwist}. Osoinach's construction was later generalized by Abe, Jong, Omae, and Takeuchi in \cite{AbeJongOmaeTakeuchi13}, where the notion of an annulus presentation for a knot was introduced. Second, in Subsection~\ref{atwist}, we generalize this annulus presentation to contact and symplectic topology. Third, in Subsection~\ref{strace} we develop results based on the work of Akbulut \cite{Akbulut77}, and in particular its application in \cite{AbeJongLueckeOsoinach15, AbeJongOmaeTakeuchi13}, to show that some of the previously constructed Legendrian knots have the same Stein trace. 

These constructions all produce knots with the same $0$-surgery, and this translates into the fact that all the Legendrian knots that we produce have Thurston--Bennequin invariant $1$. In the smooth context, the annulus twist was also modified in~\cite{AbeJongLueckeOsoinach15} in order to produce examples with any integer surgery coefficient. We adapt this construction to the contact category in Subsections~\ref{starmove} and produce pairs of Legendrian knots with any Thurston--Bennequin invariant that share equivalent Stein traces. In Subsection~\ref{sec:other} we generalize other smooth constructions of pairs of knots with the same surgery to the contact and symplectic framework. 

%%%%%%%%%%%%%%%%%%%%%%%%%%%%%%%%%%%%%%%%%%%%%%
\subsection{Contact annulus twists}\label{annulustwist}
%%%%%%%%%%%%%%%%%%%%%%%%%%%%%%%%%%%%%%%%%%%%%%
The first crucial ingredient in the proof of Theorem~\ref{thm:main} is a contact version of the smooth annulus twist. We expect this contact annulus twisting to be useful in future situations, and we present it as the separate Lemma \ref{lem:twist}. First, we use the following notion.

\begin{defi}
Let $(M,\xi)$ be a contact manifold and $K$ in $(M,\xi)$ a Legendrian knot. A \textit{pre-Lagrangian annulus} $A$ in $(M,\xi)$ in the Legendrian knot type $K$ is any embedded annulus formed by flowing the Legendrian knot $K$ for a short time under a Reeb flow associated to a contact 1-form for $\xi$.\hfill$\Box$
\end{defi}

Note that a pre-Lagrangian annulus $A$ in the Legendrian knot type $K$ is foliated by copies of $K$; we also denote the boundary of $A$ by $K\cup K'$, where $K'$ always denotes a (small) Reeb push-off of $K$. The contact annulus twist reads as follows.

\begin{lem}[Contact annulus twist]\label{lem:twist}
Let $K$ in $(M,\xi)$ be a Legendrian knot and $L$ a Legendrian link in the exterior of $K$. Consider a pre-Lagrangian annulus  $A$ in the knot type $K$, with boundary $\partial A=K\cup K'$, such that $L$ intersects $A$ transversely in its interior $\mathring{A}$.

For any $n\in \N$ and a choice of contact surgery coefficients on $L$, let $(M_\pm,\xi_\pm)$ be the result of contact $(\pm 1/n)$-surgery on $K$, contact $(\mp 1/n)$-surgery on $K'$ and contact surgery on $L$ with the chosen coefficients, then
\begin{itemize}
	\item[(i)] The manifold $(M_+,\xi_+)$ is contactomorphic to the result of contact surgery on the link $L'$ obtained form $L$ by performing a connected sum with the $(-1,n)$-Legendrian cable of $K'$ at each intersection point of $L$ with $A$. In addition, the contact surgery coefficients on $L'$ are the same as those on $L$. See Figure~\ref{fig:twist} (top).\\
	
	\item[(ii)] The manifold $(M_-,\xi_-)$ is contactomorphic to the result of contact surgery on the link $L''$ obtained form $L$ by performing a connected sum with the $(1,n)$-Legendrian cable of $K'$ at each intersection point of $L$ with $A$. Similarly, the contact surgery coefficients on $L''$ are the same as those on $L$. See Figure~\ref{fig:twist} (bottom).
\end{itemize}
By definition, $L'$ or $L''$ is said to be obtained from $L$ by an \dfn{n-fold contact annulus twist}. 
\end{lem}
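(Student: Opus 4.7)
The plan is to use the pre-Lagrangian annulus $A$ as a track that converts each transverse intersection point of $L$ with $A$ into a contact handle slide of $L$ over $K'$, after which $K$ and $K'$ will be canceled against one another. I will treat case (i) in detail; case (ii) is identical with all signs reversed.

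Enumerate the intersection points $L \cap \mathring{A} = \{p_1, \dots, p_k\}$. For each $p_i$, the foliation of $A$ by Legendrian push-offs of $K$ provides a preferred path from $p_i$ to $K'$: slide the strand of $L$ through $p_i$ along the Reeb direction (inside a standard contact neighborhood of $A$) until it crosses $K'$. In the manifold where contact $(-1/n)$-surgery has already been performed on $K'$, this isotopy is precisely a contact handle slide of that strand over $K'$ in the sense of Lemma~\ref{lem:contactHandleSlide}. That lemma replaces the local strand of $L$ by its Legendrian connected sum with the $(-1,n)$-Legendrian cable of $K'$, and it keeps the contact surgery coefficient on the affected component of $L$ unchanged. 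Performing this at each $p_i$, the modified link $L'$ becomes disjoint from the annulus $A$, and in particular from both $K$ and $K'$.

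At this point $K$ still carries $(+1/n)$-surgery and its Reeb push-off $K'$ still carries $(-1/n)$-surgery, with $L'$ lying outside their tubular neighborhoods. By the $(1/n)$-version of the Cancellation Lemma~\ref{cancellationlemma} stated immediately after that lemma, the two surgeries on $K$ and $K'$ cancel, yielding $(M,\xi)$ with the link $L'$ inside. Performing the originally prescribed contact surgeries on $L'$ then produces $(M_+,\xi_+)$, which is the claim of (i). For (ii), the Cancellation Lemma still applies to $(\mp 1/n)$ and $(\pm 1/n)$ on the Reeb pair $K,K'$, and Lemma~\ref{lem:contactHandleSlide} applied at each $p_i$ now yields Legendrian connected sums with the $(+1,n)$-cable of $K'$.

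The main technical point to pin down is that the local isotopy of each strand of $L$ from $p_i$ across $K'$, traced along the Reeb foliation of the pre-Lagrangian annulus $A$, realizes exactly the handle-slide pattern of Lemma~\ref{lem:contactHandleSlide}. This reduces to a computation in a standard contact neighborhood of $K'$: the pre-Lagrangian structure on $A$ guarantees that, when the strand reaches $\partial(\nu K')$, it arrives as a ruling curve in the position with respect to which Lemma~\ref{lem:contactHandleSlide} was formulated, so no additional twisting correction appears. Commutativity of successive slides at distinct intersection points is then automatic: after each slide, the modified strand exits the neighborhood of $K'$ and cannot interact with the arcs involved in later slides.
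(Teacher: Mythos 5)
Your proposal is correct and follows essentially the same route as the paper: apply the contact handle slide of Lemma~\ref{lem:contactHandleSlide} to each strand of $L$ meeting $A$, sliding it over $K'$ along an arc in the annulus so that the resulting link is disjoint from $A$, and then cancel the $(\pm1/n)$- and $(\mp1/n)$-surgeries on the Reeb pair $K, K'$ via Lemma~\ref{cancellationlemma}. The extra discussion you give of why the slide along the pre-Lagrangian foliation realizes the local handle-slide model, and why slides at distinct intersection points commute, is a reasonable elaboration of details the paper leaves implicit.
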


\begin{figure}[htbp]{\small
  \begin{overpic}%[grid,tics=10] 
  {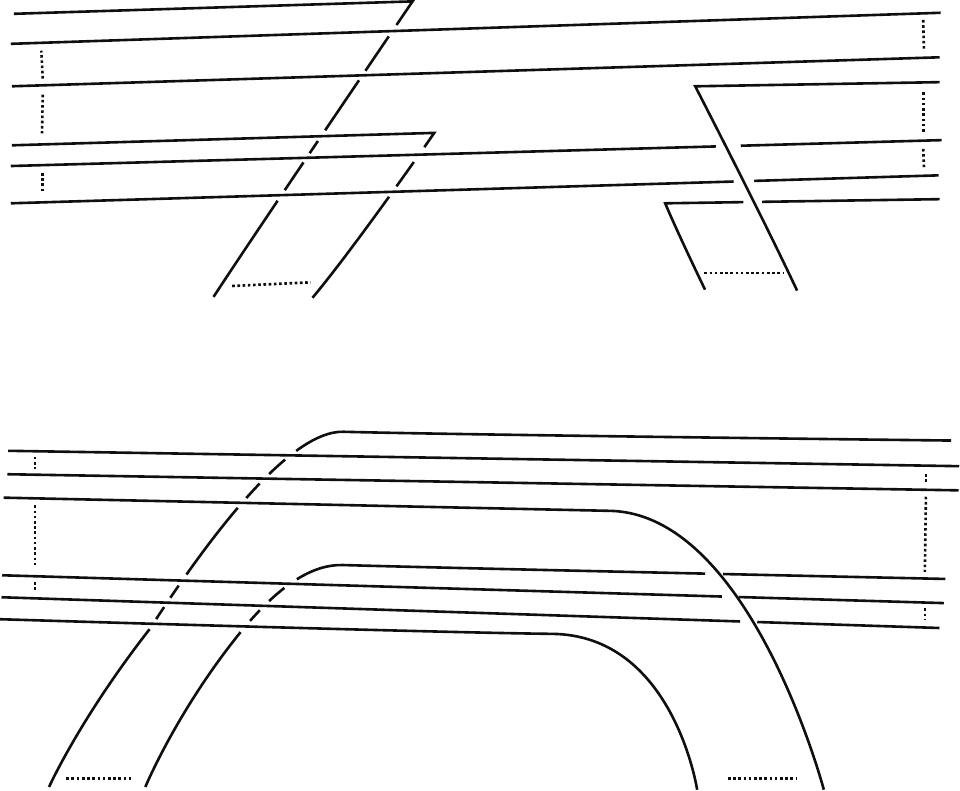}
     \put(180, 150){$L'$}
      \put(50, 150){$(n)$}
     \put(180, 10){$L''$}
      \put(52, 10){$(n)$}
  \end{overpic}}
  \caption{The knots $L'$ and $L''$ that feature in the Contact Annulus Twist stated in Lemma \ref{lem:twist}. They are obtained by contact handle slides, as in the proof of the lemma. Away from the shown part of the picture we have $n$ strands parallel to the annulus for every intersection point of $L$ with the annulus. For concrete examples we refer to Figures~\ref{fig:exampleContactomorphism},~\ref{fig:examplenmove} and~\ref{fig:notExtend}.}
  \label{fig:twist}
\end{figure}

\begin{proof}
In case that the Legendrian $L$ in $(M,\xi)$ does not intersect $A$, this is the Cancellation Lemma, as stated in Lemma~\ref{cancellationlemma}. For each intersection point of $L$ with $A$, we use Lemma~\ref{lem:contactHandleSlide} to perform a contact handle slide of $L$ over $K'$ along an arc contained in $A$. See Figure~\ref{fig:annulus}. Then the new link $L'$ has no intersections with $A$ and thus $K$ and $K'$ can be canceled, obtaining the claimed result. Alternatively, we can also slide $L$ over $K$ and get the same surgery diagram. A similar argument works for the manifold $(M_-,\xi_-)$. 
\end{proof}

\begin{figure}[htbp]{\small
		\begin{overpic}%[grid,tics=10] 
			{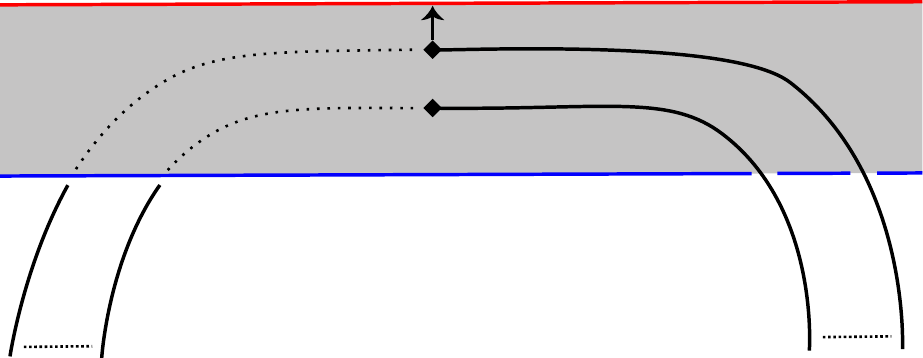}
			\put(-9, 75){$A$}
			\put(270, 51){\color{blue} $K$}
			\put(270, 100){\color{red} $K'$}
			\put(220, 10){$L$}
			\put(35, 10){$r$}
	\end{overpic}}
	\caption{The pre-Lagrangian annulus $A$ and its intersections with $L$. The surgery coefficients on $L$ are indicated by $r$. The black arrow indicates a handle slide. Note that we are only drawing a local picture of $A$ and $L$, this is part of a larger contact surgery diagram in $(M,\xi)$.}
	\label{fig:annulus}
\end{figure}

%%%%%%%%%%%%%%%%%%%%%%%%%%%%%%%%%%%%%%%%%%%%%%
\subsection{Contact annulus presentations}\label{atwist}
%%%%%%%%%%%%%%%%%%%%%%%%%%%%%%%%%%%%%%%%%%%%%%

Let $K$ in $(M,\xi)$ be a Legendrian knot and a pre-Lagrangian annulus $A$ in the knot type $K$, with boundary $\partial A=K\cup K'$. Let $\gamma$ in $(M,\xi)$ be a Legendrian arc that begins on $K$, ends on $K'$ and transversely intersects $A$ in its interior. Form a knot $L_{A,\gamma}$ in $(M,\xi)$ by removing a small neighborhood of the end points of $\gamma$ from $K$ and $K'$, and then taking $\gamma$ and a push-off of $\gamma$ to create a Legendrian band sum of $K$ and $K'$. By definition, the pair $(A,\gamma)$ of a pre-Lagrangian annulus $A$ and the Legendrian arc $\gamma$ is said to be a \textit{contact annulus presentation} of the Legendrian knot $L_{A,\gamma}$. This is illustrated in Figure~\ref{fig:annuluspres}. In this manuscript we only consider the case where the annulus $A$ union the band along $\gamma$ is an (immersed) oriented surface; it is then verified that $\tb(L_{A,\gamma})=1$ and $\rot(L_{A,\gamma})=0$. 
 
\begin{figure}[htbp]{\small
  \begin{overpic}%[grid,tics=10] 
  {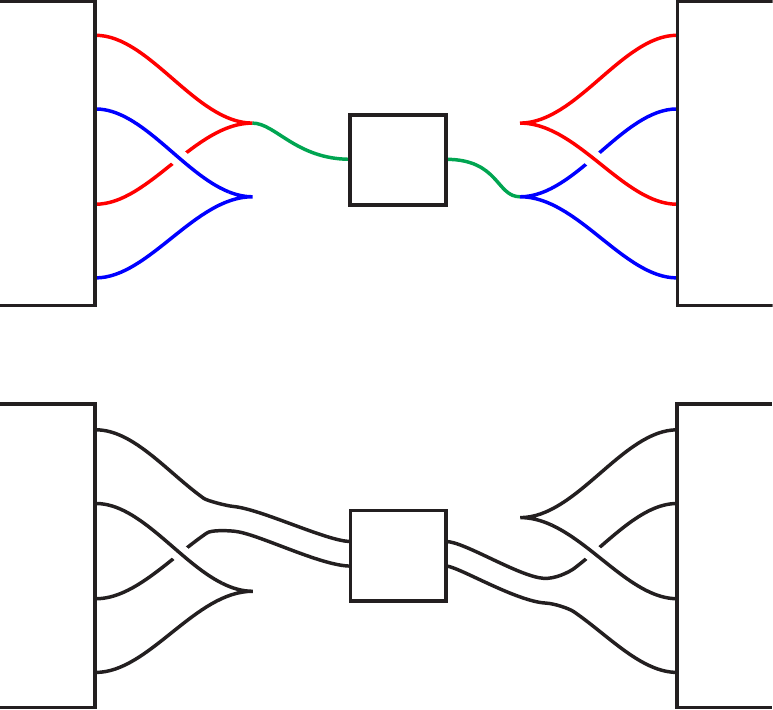}
    \put(112, 157){\color{darkgreen}$\gamma$}
    \put(45, 192){\color{red} $K'$}
    \put(45, 123){\color{blue} $K$}
     \put(112, 42){$\gamma$}
     \put(150, 20){$L_{A,\gamma}$}
  \end{overpic}}
  \caption{Contact annulus presentation of a Legendrian $L$. In the top figure, we have depicted $K$ and its Legendrian push-off $K'$, co-bounding an annulus $A$, and the Legendrian arc $\gamma$. The box around $\gamma$ indicates that $\gamma$ is allowed to intersect the annulus $A$ again. In the bottom figure, we have depicted the resulting Legendrian knot $L_{A,\gamma}$, obtained as the band sum of $K$ and $K'$ along $\gamma$.}
  \label{fig:annuluspres}
\end{figure}

Given a contact annulus presentation $(A,\gamma)$ and associated knot $L_{A,\gamma}$, we denote by $A'\subset \mathring{A}$ a pre-Lagrangian sub-annulus in the interior of $A$ that contains all the intersections of $\gamma$ with $A$, as well as the band about $\gamma$ used to form $L_{A,\gamma}$. Let $K_1,K_2$ be the boundary components of $A'$, $\partial A'=K_1\cup K_2$, with $K_1$ closer on $A$ to $K$ than $K_2$. The following lemma, with the notations as above, is crucial to show that the Legendrian knots in Theorem~\ref{thm:main}.(i) have contactomorphic $(-1)$-surgeries.

\begin{lem}\label{secondannulus}
 The Legendrian knots $K_1$ and $K_2$, considered as Legendrian knots in the surgered contact manifold $L_{A,\gamma}(-1)$, are Legendrian isotopic.
\end{lem}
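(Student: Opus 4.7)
The plan is to construct an explicit Legendrian isotopy from $K_1$ to $K_2$ in $L_{A,\gamma}(-1)$, using the Reeb flow along the pre-Lagrangian sub-annulus $A'$ together with contact handle slides to resolve the intersection points of $A'$ with $L_{A,\gamma}$. The central observation is that contact $(-1)$-surgery on $L_{A,\gamma}$ is topologically $0$-surgery, since $\tb(L_{A,\gamma})=1$ by the construction of a contact annulus presentation, so the algebraic data will work out.

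First I would analyze the intersection structure of $A'$ with $L_{A,\gamma}$. By the hypothesis on $A'$, it meets $L_{A,\gamma}$ transversely in exactly $2k$ points, where $k$ is the number of interior transverse intersections of $\gamma$ with $A$. These $2k$ points come in $k$ natural pairs, one pair per $\gamma$--$A$ crossing, with each pair being the two long sides of the band about $\gamma$ crossing $A'$ near that intersection. Since $L_{A,\gamma}$ is a single oriented closed knot and traverses the two long sides of the band in opposite directions (one from $K$ to $K'$, the other from $K'$ back to $K$), the two intersection points in each pair have opposite signs, and in particular $A'\cdot L_{A,\gamma}=0$ algebraically.

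Second, the Reeb flow along $A'$ provides a Legendrian isotopy from $K_1$ to $K_2$ in $(S^3,\xi_{st})$ which fails to be an isotopy in the complement of $L_{A,\gamma}$ precisely at the $2k$ obstructions. At each obstruction I would push the moving Legendrian over $L_{A,\gamma}$ by a contact handle slide as in Lemma \ref{lem:contactHandleSlide}, directed along an arc in $A'$ ending at the obstruction. This produces a Legendrian connected sum with a $(-1,1)$-Legendrian cable $J$ of $L_{A,\gamma}$, which by Lemma \ref{lem:contactHandleSlide} is a standard Legendrian unknot in $L_{A,\gamma}(-1)$ and therefore represents a Legendrian isotopy in the surgered manifold.

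Third, and this is where the pairing enters, for each pair of obstructions coming from the same $\gamma$--$A$ crossing, the two handle slides are attached along two parallel sub-arcs of $A'$ lying on opposite sides of the band, and the opposite signs of the paired intersections force the two cabled connect summands to be inverse to each other up to Legendrian isotopy in $L_{A,\gamma}(-1)$. Concretely, I would view the local move across a band crossing as: slide over one long side of the band out of $A'$, travel parallel to the band strip, then slide back over the other long side into $A'$ on the far side of the band. The local net effect is a Legendrian isotopy supported in a neighborhood of the paired crossing, with zero algebraic winding around $L_{A,\gamma}$, so it cancels up to Legendrian isotopy in $L_{A,\gamma}(-1)$. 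Iterating this over the $k$ pairs resolves all obstructions, and the unobstructed portion of the Reeb flow then completes the Legendrian isotopy from $K_1$ to $K_2$.

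The main obstacle is verifying this pairwise cancellation rigorously. The key technical input is that the topological surgery framing along $L_{A,\gamma}$ is $0$ (since $\tb=1$ and the contact coefficient is $-1$), which is precisely what allows the algebraic cancellation $A'\cdot L_{A,\gamma}=0$ to be upgraded from a homological statement to a genuine Legendrian isotopy; any nonzero framing would leave a residual twist around the new surgery solid torus obstructing isotopy, but with framing $0$ the paired handle slides telescope into the trivial move, yielding the claimed Legendrian isotopy.
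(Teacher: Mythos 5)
Your setup is sound: the count of $2k$ transverse intersections of $L_{A,\gamma}$ with $\mathring{A'}$, their pairing with opposite signs, and the observation that each obstruction to the Reeb-flow isotopy must be resolved by a contact handle slide as in Lemma~\ref{lem:contactHandleSlide} are all correct. But the crux of your argument --- that the two handle slides at a paired crossing ``telescope into the trivial move'' because $A'\cdot L_{A,\gamma}=0$ and the topological framing is $0$ --- is a genuine gap, and the justification offered is not repairable as stated. A handle slide over $L_{A,\gamma}$ is a connected sum with a \emph{global} parallel copy of the entire knot (the $(-1,1)$-cable $J$), not a local modification near the crossing, so the claim that ``the local net effect is a Legendrian isotopy supported in a neighborhood of the paired crossing'' does not follow from the algebraic count. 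Worse, if the paired slides really cancelled via an isotopy supported near the crossing and away from the surgery region, you would have shown that $K_1$ and $K_2$ are Legendrian isotopic already in the complement of $L_{A,\gamma}$; that would make every contact annulus twist act trivially and contradict Theorem~\ref{thm:infinitesurgeries}. The obstruction loop at a paired crossing is a commutator $m_1m_2^{-1}$ of oppositely oriented meridians in $\pi_1(S^3\setminus L_{A,\gamma})$: it is null-homologous (this is your ``zero algebraic winding'') but generically not null-homotopic, so homology cannot be upgraded to isotopy in the complement. What actually makes the move possible is the surgery itself --- after gluing in the new solid torus the meridians $m_1,m_2$ become longitudes of that torus and $m_1m_2^{-1}$ becomes null-homotopic --- and note that this holds for \emph{any} integer framing, so the framing-$0$ condition is not the mechanism you claim it is. Supplying an embedded, appropriately disjoint disk for this null-homotopy and upgrading it to a Legendrian isotopy is precisely the missing content.

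For comparison, the paper avoids the local bookkeeping entirely: it performs a \emph{single} handle slide of $K_2$ over $L_{A,\gamma}$, so that $K_2\# J$ contains an arc parallel to $\widetilde{K'}$ (which retracts against $K_2$, since $K_2$ is a push-off of $K'$) and two oppositely oriented parallel copies $\gamma',\gamma''$ of $\gamma$ joined by a cusp (which retract as a finger); what remains is a push-off of $\widetilde{K}$, i.e.\ $K_1$. If you want to keep your puncture-by-puncture strategy, you would need to either carry out the analogue of this retraction for the $2k$ summands $J_1\#\cdots\# J_{2k}$ explicitly, or prove a clean lemma stating that an arc may be passed across a pair of oppositely oriented strands of the surgery knot by an isotopy in $L_{A,\gamma}(\pm1/n)$, with an honest construction of the disk involved.
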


\begin{proof}
 We represent our Legendrian knots in surgery diagrams for $(M,\xi)$, drawn as front projections, and the Legendrian arc $\gamma$ will be attached to $K$ and $K'$ at cusps in the front diagram. Figure~\ref{fig:part1} (top) shows the Legendrian knot $L_{A,\gamma}$ and the two Legendrian knots, $K_1$ and $K_2$, that bound the pre-Lagrangian sub-annulus $A'$.  We perform a contact $(-1)$-surgery along $L_{A,\gamma}$. After performing a handle slide of $K_2$ over $L_{A,\gamma}$, we obtain Figure~\ref{fig:part1} (bottom). Note that the Legendrian knot $L_{A,\gamma}$ is made out of four Legendrian arcs: $\widetilde{K'}$ and $\widetilde{K}$, where $\widetilde{K}$ denotes $K$ with a small neighborhood of the $\partial \gamma$ removed from it (and similarly for $\widetilde{K'}$), and two copies of $\gamma$, that we denote $\gamma'$ and $\gamma''$.
 
	\begin{figure}[htbp]{\small
			\begin{overpic}%[grid,tics=10] 
				{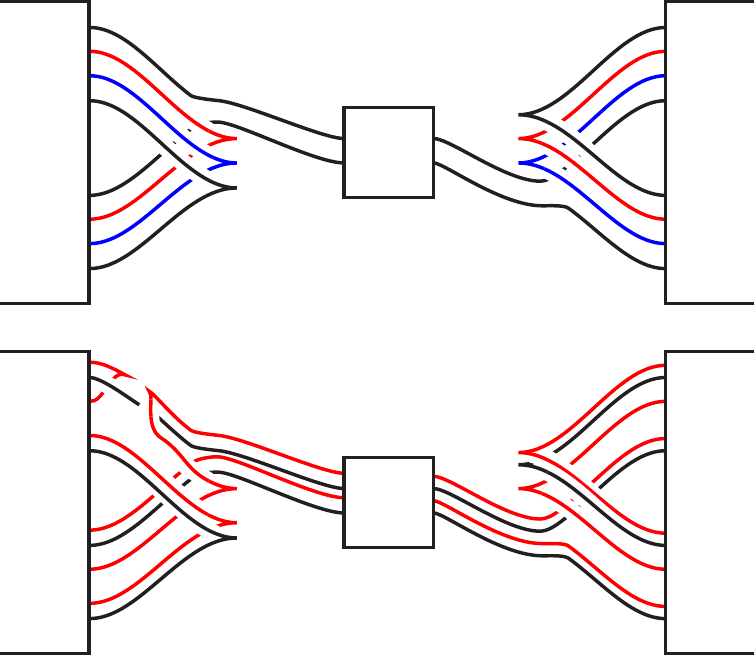}
				\put(109, 143){$\gamma$}
				\put(11, 172){\color{red} $K_2$}
				\put(11, 117){\color{blue} $K_1$}
				\put(158, 110){$(-1)$}
				\put(70, 164){$L_{A,\gamma}$}
				\put(109, 41){$\gamma$}
				\put(70, 65){$L_{A,\gamma}$}
				\put(158, 8){$(-1)$}
		\end{overpic}}
		\caption{The top diagram is $L_{A,\gamma}$ together with the Legendrian knots $K_1$ and $K_2$. The bottom figure is the result of Legendrian surgery on $L_{A,\gamma}$  and sliding $K_2$ over $L_{A,\gamma}$.}
		\label{fig:part1}
	\end{figure}

The Legendrian knot $K_2$, after the slide, and the Legendrian arc $\widetilde{K'}$ are joined by a cusp, as depicted in the upper left of the front diagram in Figure~\ref{fig:part1} (bottom). Since $K_2$ and $K'$ are parallel, there is a Legendrian isotopy ending in the upper diagram of Figure~\ref{fig:part2}. 
	\begin{figure}[htbp]{\small
			\begin{overpic}%[grid,tics=10] 
				{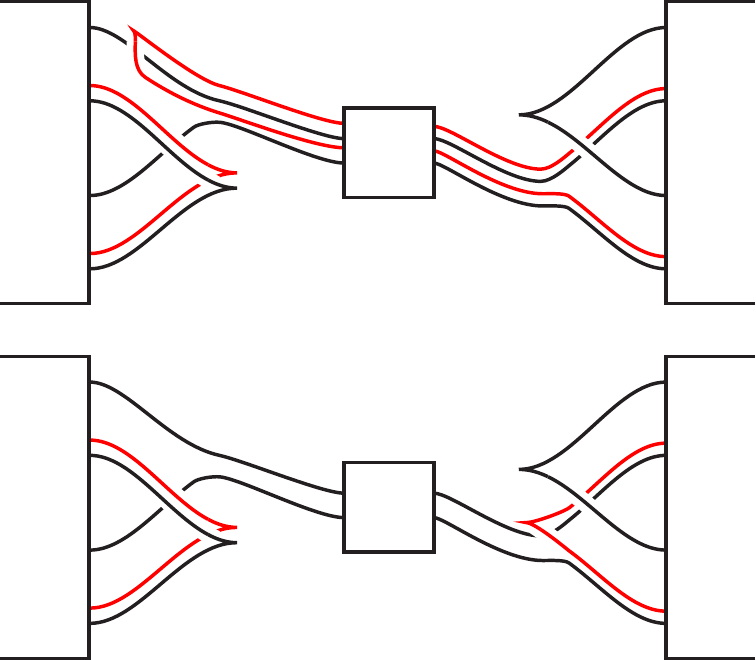}
				\put(109, 144){$\gamma$}
				\put(109, 41){$\gamma$}
				\put(70, 169){$L_{A,\gamma}$} 
				\put(70, 62){$L_{A,\gamma}$}
				\put(160, 8){$(-1)$}
				\put(160, 110){$(-1)$}
		\end{overpic}}
		\caption{Steps in the Legendrian isotopy from $K_2$ to $K_1$, as used in the proof for Lemma \ref{secondannulus}.}
		\label{fig:part2}
	\end{figure}
Now, the Legendrian curves $\gamma'$ and $\gamma''$ are Legendrian push-offs of $\gamma$, and connected with a cusp as in the upper diagram in Figure~\ref{fig:part2}. Thus, there is a Legendrian isotopy to the bottom diagram in Figure~\ref{fig:part2}, which is isotopic to $K_1$, as required. 
\end{proof}

Let us now readily deduce from Lemma \ref{secondannulus} that contact annulus twists preserve the contact type of the contact $(-1)$-surgery.

\begin{thm}\label{mainannulustwist}
Let $(M,\xi)$ be a contact 3-manifold and $(A,\gamma)$ a contact annulus presentation. Consider the Legendrian knots $L_n$ in $(M,\xi)$, $n\in\N_0$, obtained by performing an $n$-fold contact annulus twist using the knots $\partial A'$ applied to $L_{A,\gamma}$. Then, the contactomorphism type of the Legendrian $(-1)$-surgery on $L_n$ in $(M,\xi)$ is independent of $n$, i.e. the contact $(-1)$-surgery on $L_n$ is contactomorphic to the contact $(-1)$-surgery on $L_{A,\gamma}$ for all $n\in\N_0$.
\end{thm}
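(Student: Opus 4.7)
The plan is to reduce $L_n(-1)$ to $L_0(-1)$ by applying the contact annulus twist of Lemma~\ref{lem:twist} to the sub-annulus $A'$ with boundary $K_1 \cup K_2$, and then cancelling the newly introduced surgeries via the Legendrian isotopy of $K_1$ and $K_2$ supplied by Lemma~\ref{secondannulus}. By construction $L_0 = L_{A,\gamma}$, so only the case $n \geq 1$ requires argument.

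First, I would apply Lemma~\ref{lem:twist} in reverse with $K = K_1$, $K' = K_2$, and $L = L_{A,\gamma}$ endowed with its surgery coefficient $-1$. By definition of the $n$-fold contact annulus twist, the Legendrian knot $L_n$ is exactly the knot $L'$ (resp.\ $L''$) in Lemma~\ref{lem:twist} obtained from $L_{A,\gamma}$ by connect-summing with the $(\mp 1, n)$-Legendrian cable of $K_2$ at each intersection of $L_{A,\gamma}$ with $A'$. Lemma~\ref{lem:twist} then identifies the contact manifold $L_n(-1)$ with the contact manifold obtained from $(M,\xi)$ by contact $(-1)$-surgery on $L_{A,\gamma}$ together with contact $(\pm 1/n)$-surgery on $K_1$ and contact $(\mp 1/n)$-surgery on $K_2$. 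Equivalently, $L_n(-1)$ is obtained from $L_0(-1)$ by these two additional rational contact surgeries along $K_1$ and $K_2$.

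Next, I would invoke Lemma~\ref{secondannulus}, which asserts that $K_1$ and $K_2$, considered as Legendrian knots in the contact manifold $L_{A,\gamma}(-1) = L_0(-1)$, are Legendrian isotopic. By the standard neighborhood theorem for Legendrian knots, any such isotopy can be extended so that $K_2$ is carried into a standard contact neighborhood of $K_1$ and realized as a small Reeb push-off of $K_1$ inside $L_0(-1)$. Once $K_2$ sits as a Reeb push-off of $K_1$, the extended Cancellation Lemma (the statement immediately after Lemma~\ref{cancellationlemma}, asserting that $(\pm 1/n)$-surgery on a knot cancels $(\mp 1/n)$-surgery on its Reeb push-off) implies that the pair of surgeries on $K_1$ and $K_2$ cancel. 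Therefore the resulting contact manifold is contactomorphic to $L_0(-1)$, giving $L_n(-1) \cong L_0(-1)$.

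The main obstacle I anticipate is verifying that the Legendrian isotopy from $K_2$ to $K_1$ provided by Lemma~\ref{secondannulus}, which takes place \emph{inside} the surgered manifold $L_0(-1)$, can be converted into the geometric adjacency required by the cancellation lemma, namely that $K_2$ ends up as a genuine small Reeb push-off of $K_1$ in a Darboux-type neighborhood. This is a standard consequence of the uniqueness of standard contact neighborhoods of Legendrian knots, but it must be carried out carefully so as not to disturb the already-performed surgery on $L_{A,\gamma}$; the ambient isotopy should be supported in the complement of (a neighborhood of) the surgery torus. Once this is in place, the cancellation is immediate and the theorem follows.
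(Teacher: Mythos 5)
Your proposal is correct and takes essentially the same approach as the paper: the paper's proof likewise commutes the $(\pm 1/n)$-surgeries on $K_1$ and $K_2$ past the Legendrian surgery on $L_{A,\gamma}$ and then cancels them inside $L_{A,\gamma}(-1)$ using Lemma~\ref{secondannulus} together with Lemma~\ref{cancellationlemma}. The one subtlety you flag---that the isotopy from $K_2$ to $K_1$ must actually position $K_2$ as a Reeb push-off of $K_1$ in the complement of $K_1$, which follows from the explicit isotopy constructed in the proof of Lemma~\ref{secondannulus} rather than from abstract uniqueness of standard neighborhoods---is left equally implicit in the paper.
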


\begin{proof} Let $(M',\xi')$ be the result of a Legendrian surgery on $L_{A,\gamma}$. By Lemma \ref{secondannulus} above and Lemma~\ref{cancellationlemma}, performing a Legendrian surgery on $L_{A,\gamma}$, and then a contact $(\pm 1/n)$-surgery on $K_1$ and contact $(\mp 1/n)$-surgery on $K_2$, results in $(M',\xi')$. At the same time, performing the same surgeries on $K_2$ and $K_1$ before the Legendrian surgery on $L_{A,\gamma}$ results in the Legendrian knots $L_n$, and thus the Legendrian surgery on $L_n$ also yields the contact 3-manifold $(M',\xi')$. 
\end{proof}

\begin{rem}\label{explicitdiffeom}
It is reasonable to denote $L_0:=L_{A,\gamma}$, so that $n\in\N_0$ includes the $n=0$ case. For future use, we notice that there is an explicit contactomorphism from the Legendrian surgery on $L_0$ to the Legendrian surgery on $L_n$. Indeed, it is given by starting with Legendrian surgery on $L_0$, adding a canceling pair of surgeries along $K_1$ and $K_2'$ (from Lemma~\ref{secondannulus}), then sliding $K_2'$ over $L_0$ to obtain $K_2$, and finally performing the contact annulus twist, as in Lemma~\ref{lem:twist}, using $K_1$ and $K_2$. An example is shown in Figure~\ref{fig:exampleContactomorphism}.\hfill$\Box$
\end{rem}

\begin{figure}[htbp] 
{\small
  \begin{overpic}%[grid,tics=10] 
  {}
    \put(130, 174){$(-1)$}
     \put(360, 173){$(-1)$}
     \put(301, 250){\color{blue}$(1/n)$}
      \put(200, 209){\color{red}$(-1/n)$}
        \put(130, 13){$(-1)$}
     \put(360, 12){$(-1)$}
     \put(314, 55){\color{blue}$(1/n)$}
      \put(198, 68	){\color{red}$(-1/n)$}
  \end{overpic}}
	\caption{The top left figure shows contact $(-1)$ surgery along the Legendrian knot $L_0$. In the top right we have introduced a canceling pair. In the bottom right surgery diagram we can perform a handle slide as indicated with the red arrow to arrive at the top right picture. On the other hand we can slide the two black arcs as indicated with the black arrow to get Legendrian surgery along $L_n$ (depicted in the bottom left for $n=1$). The composition of these contact Kirby moves yields an explicit contactomorphism $\phi\colon L_0(-1)\rightarrow L_n(-1)$.}
	\label{fig:exampleContactomorphism}
\end{figure}

The construction in Theorem~\ref{mainannulustwist} produces many families of Legendrian knots $L_n$ in $(M,\xi)$ on which Legendrian surgery produces the same contact 3-manifold. That said, it is not clear that these Legendrian knots $L_n$ are necessarily all distinct. In fact, there are situations where all these Legendrian knots $L_n$ are isotopic. Let us now show that, with the appropriate choice of contact annulus presentation $(A,\gamma)$, they are indeed all non-isotopic.

\begin{thm}\label{thm:infinitesurgeries}
Let $(S^3,\xi_{st})$ be the standard contact 3-sphere. There exists a contact annulus presentation $(A,\gamma)$ such that the $n$-fold contact annulus twists $L_n$ in $(S^3,\xi_{st})$ are pairwise not Legendrian isotopic. In particular, there exists an infinite family of distinct Legendrian knots $L_n$ in $(S^3,\xi_{st})$ whose contact $(-1)$-surgeries are all contactomorphic.
\end{thm}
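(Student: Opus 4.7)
The goal is to exhibit an explicit contact annulus presentation whose associated family $L_n$ in $(S^3,\xi_{st})$ consists of pairwise \emph{smoothly} non-isotopic knots. Since any Legendrian isotopy is in particular a smooth isotopy, distinction as smooth knots is enough to give distinction as Legendrian knots, and the statement about contactomorphic $(-1)$-surgeries then follows at once from Theorem~\ref{mainannulustwist}.

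\textbf{Plan.} First, I would take the explicit contact annulus presentation $(A,\gamma)$ that yields $L_0$ as depicted in Figure~\ref{fig:primeex}; this is a direct contact-topological translation of the smooth annulus presentation used by Osoinach~\cite{Osoinach06} and Abe--Jong--Omae--Takeuchi~\cite{AbeJongOmaeTakeuchi13} to produce knots sharing the same $0$-surgery. One verifies from the front diagram that $\tb(L_0)=1$ and $\rot(L_0)=0$, consistent with Theorem~\ref{thm:main}.(i). Applying the contact annulus twist of Lemma~\ref{lem:twist} iteratively, using the two core curves $K_1,K_2\subset A'$ as in the setup preceding Lemma~\ref{secondannulus}, produces the family $\{L_n\}_{n\in\N_0}$. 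Theorem~\ref{mainannulustwist} immediately gives $L_n(-1)\cong L_0(-1)$ contactomorphically for all $n$.

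Second, I would argue that the underlying smooth knot types $|L_n|$ agree with the smooth annulus-twist family in the cited literature, because the contact annulus twist of Lemma~\ref{lem:twist} projects, at the level of smooth knots, to the Osoinach annulus twist along the oriented surface $A\cup(\text{band})$ used to define the contact annulus presentation. Hence the smooth isotopy types of the $L_n$ are exactly those of the known smooth families whose pairwise non-isotopy has already been established. Concretely, for the presentation in Figure~\ref{fig:primeex} one can appeal to the Alexander polynomial computation of~\cite{AbeJongOmaeTakeuchi13}: the Alexander polynomials $\Delta_{|L_n|}(t)$ have growing breadth (equivalently, the genera grow) as $n$ increases, so the $|L_n|$ are pairwise distinct; alternatively, one can invoke the hyperbolic volume argument from~\cite{AbeJongLueckeOsoinach15}, where the hyperbolic structures on the complements are shown to be pairwise non-isometric. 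In either case, pairwise smooth distinction is immediate.

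Third, smooth distinction implies that no two $L_n$ are Legendrian isotopic in $(S^3,\xi_{st})$, which combined with step one completes the proof.

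\textbf{Main obstacle.} The genuinely contact-topological content, namely that every $L_n$ is obtained from $L_0$ by a sequence of contact Kirby moves preserving the contact $(-1)$-surgery, is already handled by Theorem~\ref{mainannulustwist} and the explicit contactomorphism recorded in Remark~\ref{explicitdiffeom}. The real difficulty is therefore choosing the presentation $(A,\gamma)$ so that the smooth twist family is non-trivial: a careless choice can yield an annulus twist that is smoothly isotopic to the identity (for instance, when the band $\gamma$ is disjoint from $A'$ or links $A'$ trivially), and then all the $L_n$ coincide up to Legendrian isotopy. The presentation in Figure~\ref{fig:primeex} is engineered so that $\gamma$ intersects $A'$ with nonzero algebraic intersection in the right way, ensuring that the twist genuinely alters the smooth knot type at each step; the Alexander polynomial (or hyperbolic volume) check then certifies that this is the case.
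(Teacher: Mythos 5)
Your overall architecture matches the paper's: construct an explicit contact annulus presentation, invoke Theorem~\ref{mainannulustwist} to get that all the $L_n(-1)$ are contactomorphic, and then reduce pairwise Legendrian non-isotopy to pairwise \emph{smooth} non-isotopy of the underlying knots. The gap is in the smooth distinction step. Your primary proposed invariant, the Alexander polynomial, cannot work here: the knots $L_n$ all have $\tb(L_n)=1$, so contact $(-1)$-surgery on them is topological $0$-surgery, and the whole point of the annulus twist construction is that the $L_n$ share a common $0$-surgery. Since the Alexander polynomial is an invariant of the $0$-surgered manifold, the polynomials $\Delta_{L_n}(t)$ are all \emph{equal} — the paper states this explicitly in the remark following the proof (and notes that one must instead use, say, HOMFLY polynomials to separate finitely many of them). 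So the claim that the Alexander polynomials "have growing breadth as $n$ increases" is false, and the attribution of such a computation to the annulus-twist literature is mistaken for the same reason: those families are engineered precisely to share $0$-surgeries.

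The alternative you mention in passing — hyperbolic volumes — is in fact the argument the paper uses, and it is the one you should promote to the main line. The paper verifies with SnapPy that the exterior of the three-component link $L_{A,\gamma}\cup K_1\cup K_2$ of Figure~\ref{fig:mainex} is hyperbolic; the exterior of $L_n$ is obtained by Dehn filling $K_1$ and $K_2$ along slopes $\frac{-n\pm 1}{n}$ whose lengths go to infinity, so by Thurston and Neumann--Zagier the volumes of the $L_n$ converge strictly monotonically to the volume of the link complement, whence at most finitely many $L_n$ coincide; a rigorous computation of the finitely many short slopes then upgrades this to full pairwise distinction (alternatively one cites Baker--Gordon--Luecke). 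Note also that you cannot simply "import" the smooth non-isotopy from the cited smooth papers: the specific presentation in Figure~\ref{fig:mainex} is the paper's own example, so the hyperbolicity of its link exterior and the resulting volume estimates must be checked for that link, which is exactly what the paper's computations do.
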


\begin{proof} Consider the Legendrian knot $L_{A,\gamma}$ in $(S^3,\xi_{st})$ in Figure~\ref{fig:mainex}: the corresponding pre-Lagrangian annulus $A$ is formed from a maximal Thurston--Bennequin unknot, and $\gamma$ is readily deduced from the picture. Figure~\ref{fig:mainex} also depicts the Legendrian knots $K_1$ and $K_2$ in $(S^3,\xi_{st})$, that we can use to perform the contact annulus twists.
\begin{figure}[htbp]{\small
  \begin{overpic}%[grid,tics=10] 
  {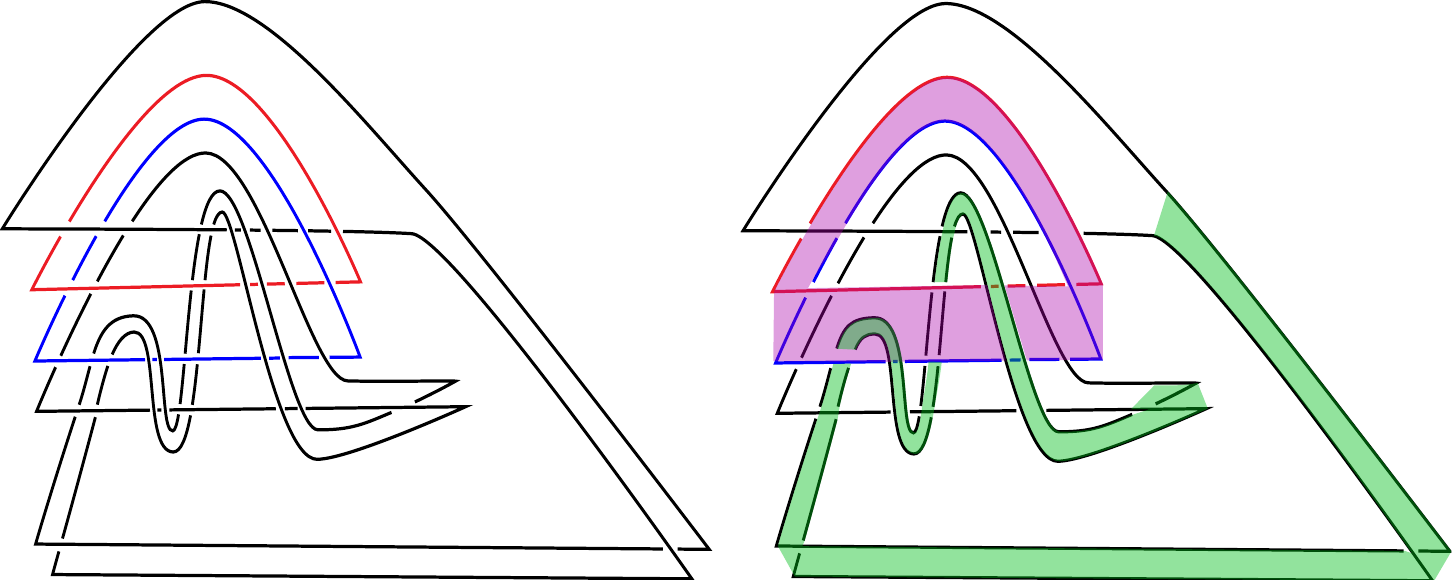}
    \put(105, 90){\color{red}$K_2$}
     \put(135, 105){$L_{A,\gamma}$}
     \put(105, 65){\color{blue}$K_1$}
     
     \put(205, 70){\color{purple}$A$}
     \put(355, 65){\color{darkgreen}$\gamma$}
  \end{overpic}}
  \caption{Left: The Legendrian knot $L_{A,\gamma}$ in the proof of Theorem \ref{thm:infinitesurgeries}. Right: The pre-Lagrangian annulus $A$ and the Legendrian band $\gamma$ used to construct $L_{A,\gamma}$.}
  \label{fig:mainex}
\end{figure}

Let us denote $L_0:=L_{A,\gamma}$, as in Remark \ref{explicitdiffeom}, and denote by $L_n$ in $(S^3,\xi_{st})$ the image of $L_0$ under contact $(1/n)$-surgery on $K_1$ and contact $(-1/n)$-surgery on $K_2$. The resulting knots $L_0$ and $L_1$ are shown in Figure~\ref{fig:primeex}. By Theorem~\ref{mainannulustwist}, the contactomorphism type of the contact $(-1)$-surgery on $L_n$ is independent of $n\in N$, and thus it suffices to argue that the $L_n$ in $(S^3,\xi_{st})$ are pairwise non-isotopic Legendrian knots. It is readily seen that their Thurston--Bennequin invariants and rotation numbers are $(\tb(L_n),\rot(L_n))=(1,0)$ for all $n\in\N_0$: we shall instead show that the $L_n$ are pairwise distinct as smooth knots, i.e. $L_n$ is not smoothly isotopic to $L_m$ if $n\neq m$, $n,m\in\N_0$. This can be done with different methods, we proceed as follows, in line with the strategy originally used by Osoinach in the smooth case, \cite{Osoinach06}. 

The software SnapPy~\cite{CullerDunfieldGoernerWeeks}, as part of SageMath, readily verifies that the exterior of the three component link $L=L_{A,\gamma}\cup K_1\cup K_2$ in Figure~\ref{fig:mainex} is hyperbolic. Dehn filling the blue and red knots, $K_1$ and $K_2$, with topological surgery coefficients $\frac{-n \pm1}{n}$ yields the exterior of the knot $L_n$. Since the length of these surgery slopes increases monotonically to infinity it follows that, for $n$ sufficiently large, all the resulting $L_n$ are hyperbolic knots and the volumes of these $L_n$ converge strictly monotonically to the volume of $L$~\cite{NeumannZagier85, Thurston80}. In particular, it follows that at most finitely many of the $L_n$ are smoothly isotopic. This argument can be improved by using SnapPy and computing all short slopes, verifying the hyperbolicity of the corresponding fillings, and rigorously computing their volumes. We have performed these precise calculations, together with some additional data, in a Jupyter SageMath Notebook which can be accessed at~\cite{CasalsEtnyreKegel}. The computations do indeed conclude that all the knots $L_n$ are pairwise smoothly non-isotopic, as required. 

Alternatively, we can use the main result from~\cite{BakerGordonLuecke16} to deduce that infinitely many of the $L_n$ are smoothly non-isotopic.
\end{proof}

\begin{rem}
Showing that finitely many of the $L_n$ are different is readily achieved by computing classical topological knot invariants. For instance, we can compute the HOMFLY polynomials $p(L_0)$ and $p(L_1)$ of $L_0$ and $L_1$ to be different:
$$
p(L_0)(l,m)=l^{-6}m^4 + (l^{-2} - 2l^{-4} - 3l^{-6} - l^{-8})m^2 + (-l^{-2} + 2l^{-4} + 3l^{-6} + l^{-8}),$$
$$p(L_1)(l,m)=l^{-14}m^4 + (l^{-2} - l^{-4} + l^{-6} - l^{-8} - 4l^{-14} - l^{-16})m^2 + (-l^{-2} + l^{-8} + 3l^{-14} + 2l^{-16}).$$

It appears to be reasonable that a general formula for all the HOMFLY polynomials $p(L_n)$ could be found, and distinguish them all as well. Nevertheless, note that the Alexander polynomials $A(L_n)$ of all these knots $L_n$ in $S^3$ coincide, since the Alexander polynomial of a knot is an invariant of the $0$-surgery.\hfill$\Box$
\end{rem}

%To our knowledge, Theorem \ref{thm:infinitesurgeries} is already a new result in contact and symplectic topology. Namely, it 
Theorem~\ref{thm:infinitesurgeries} presents the first instance of distinct Legendrian knots in $(S^3,\xi_{st})$ with contactomorphic Legendrian surgeries, and even shows that infinitely many such Legendrian knots exist. Theorem \ref{thm:main}.(i) is even stronger than that, as it claims that there are instances of distinct Legendrian knots whose Stein traces are equivalent. In fact, we shall prove Theorem~\ref{thm:main}.(i) by using Theorem \ref{thm:infinitesurgeries}: we now show that the Legendrian knots $L_n$ in $(S^3,\xi_{st})$ constructed in the proof of Theorem \ref{thm:infinitesurgeries} -- obtained as contact annulus twists on Figure~\ref{fig:mainex} -- have equivalent Stein traces.

%%%%%%%%%%%%%%%%%%%%%%%%%%%%%%%%%%%%%%%%%%%%%%
\subsection{Extending contactomorphisms over Stein traces}\label{strace}
%%%%%%%%%%%%%%%%%%%%%%%%%%%%%%%%%%%%%%%%%%%%%%
In this section we conclude the proof of Theorem \ref{thm:main}.(i). This is achieved by showing that we can extend the contactomorphisms in Theorem \ref{thm:infinitesurgeries} to the bounding Stein traces.

First, we state a result from \cite{ConwayEtnyreTosun21}, which is also a fact well-known to experts. Specifically, see \cite[Lemma~3.3]{ConwayEtnyreTosun21} for its proof.

\begin{lem}\label{nbhdoflagrangiandisk}
	Let $(W,\omega)$ be a symplectic manifold with convex boundary $\partial W$, $D \subset (W,\omega)$ a properly embedded Lagrangian disk transverse to $\partial W$. Then, there exists a neighborhood $\Op(D)\subset(W,\omega)$ of $D$ in $W$ symplectomorphic to a neighborhood of the co-core of a Weinstein 2-handle and such that the Liouville vector field defined near the $\partial W$ agrees with the Liouville vector field on the Weinstein 2-handle. In addition, this symplectomorphism can be assumed to agree with any preassigned symplectomorphism from a neighborhood of $\partial D$ to a neighborhood of the boundary of the co-core.\hfill$\Box$
\end{lem}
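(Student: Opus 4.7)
The plan is to prove this as a relative version of the Weinstein Lagrangian neighborhood theorem, where the ``relative'' part records both the constraint along $\partial D$ (coming from the preassigned symplectomorphism) and the constraint that the Liouville vector field must match the handle model near $\partial W$. The main observation that lets the proof run is that since $D$ is Lagrangian and transverse to the convex boundary, $\partial D$ is a Legendrian submanifold of the contact boundary $(\partial W,\ker(\iota_Z\omega))$, where $Z$ is the Liouville vector field; this is exactly the local picture realized by the boundary of the co-core of a Weinstein $2$-handle in its standard model.

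First I would fix the standard model of a Weinstein $2$-handle $(H,\omega_{H},Z_{H})$ together with its Lagrangian co-core $C$, and identify a collar neighborhood of $\partial C$ in $H$ with a collar neighborhood of $\partial D$ in $W$ via the given preassigned symplectomorphism $\phi_0$. In that collar, the Liouville vector fields already agree by hypothesis, so the identification automatically extends the Liouville structure in a germ around $\partial D$. Then I would use the Weinstein Lagrangian neighborhood theorem to build a diffeomorphism $\Phi$ from a neighborhood of $D$ in $W$ to a neighborhood of $C$ in $H$, sending $D$ to $C$ and extending $\phi_0$; at this stage $\Phi$ only satisfies $\Phi^*\omega_H = \omega$ along $D$ (and exactly on the collar where $\phi_0$ was used), not everywhere.

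Second I would upgrade $\Phi$ to a genuine symplectomorphism by a Moser argument applied to the family $\omega_t = (1-t)\omega + t\Phi^*\omega_H$, carried out relative to $D$ and relative to the collar of $\partial W$ on which $\omega$ and $\Phi^*\omega_H$ already coincide. Because both forms restrict trivially to the Lagrangian $D$ and because $\omega_t - \omega$ is exact and vanishes on $D\cup \Op(\partial D)$, one can choose a primitive vanishing on $D$ and on the collar, producing a time-dependent vector field whose flow fixes $D$ and the collar pointwise. Shrinking the neighborhood slightly so that the flow is defined for all $t\in[0,1]$ yields the desired symplectomorphism matching $\phi_0$ on $\Op(\partial D)$.

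Finally, to obtain the statement about the Liouville vector field, I would note that the preassigned $\phi_0$ already intertwines the two Liouville vector fields on the collar of $\partial W$ (this is what it means for $\phi_0$ to come from the contact boundary), and the Moser isotopy constructed above is the identity on that collar, so the final symplectomorphism $\Phi \circ (\text{Moser})$ still intertwines the Liouville structures near $\partial W$. The step I expect to be the main obstacle is engineering the primitive of $\Phi^*\omega_H - \omega$ to vanish simultaneously on $D$ and on the collar $\Op(\partial W)$, which requires a careful choice so that the relative Poincar\'e lemma can be applied on a neighborhood which deformation retracts onto $D$ while keeping the collar fixed; once this is set up, the rest is a standard Moser argument.
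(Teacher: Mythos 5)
The paper offers no proof of this lemma: it is quoted from \cite[Lemma~3.3]{ConwayEtnyreTosun21} and stated without argument, so there is no internal proof to compare against. Your outline is the standard argument and is essentially the one in that reference: observe that $\partial D$ is Legendrian in the convex boundary, standardize a collar of $\partial D$ using the preassigned identification and the Liouville flow, apply the Lagrangian neighborhood theorem relative to that collar, and finish with a Moser isotopy relative to $D$ and the collar. The one step you flag---choosing a primitive of $\Phi^*\omega_H-\omega$ vanishing on both $D$ and the collar---is handled by building the tubular-neighborhood retraction onto $D$ so that it preserves the collar (e.g.\ using the Liouville flow near $\partial W$, which is possible since $D$ is transverse to $\partial W$); the homotopy-operator primitive then vanishes identically wherever the two forms already agree, and the Moser vector field is zero there, so the flow fixes the collar and stays in $W$. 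With that in place your argument is complete and correct.
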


Lemma~\ref{nbhdoflagrangiandisk} can now be used in order to establish a sufficient criterion for contactomorphisms between Legendrian surgeries to extend to their Stein traces.

\begin{lem}\label{extend}
Let $L$ and $L'$ in $(S^3,\xist)$ be Legendrian knots such that there is a contactomorphism $\phi:L(-1)\longrightarrow L'(-1)$ between their Legendrian surgeries. Consider a Legendrian meridian $\mu$ in $L(-1)$ to $L$ that bounds the Lagrangian co-core $D$ of the $2$-handle in the Stein trace $W_L$, and suppose that the image $\phi(\mu)$ in $L'(-1)$ bounds a Lagrangian disk $D'$ in $W_{L'}$. Then, the contactomorphism $\phi$ extends to a symplectomorphism $\Phi:W_L\longrightarrow W_{L'}$ of the Stein traces $W_L$ and $W_{L'}$, possibly after deforming the symplectic structure on one of these Stein traces.
\end{lem}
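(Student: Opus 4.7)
The plan is to decompose each Stein trace as a standard Weinstein neighborhood of its distinguished Lagrangian disk together with a Stein ball complement, transport $\phi$ across both pieces separately, and then glue.

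First, I would apply Lemma~\ref{nbhdoflagrangiandisk} to both $D\subset W_L$ and $D'\subset W_{L'}$ to obtain symplectic neighborhoods $N(D)$ and $N(D')$ that are symplectomorphic to a model neighborhood of the co-core of a Weinstein $2$-handle. Using the ``preassigned symplectomorphism'' clause of that lemma, I would arrange that, along an annular collar of $\partial D = \mu$ in $L(-1)$ and of $\partial D' = \phi(\mu)$ in $L'(-1)$, the two model identifications agree with the restriction of $\phi$. Call the resulting handle-neighborhood symplectomorphism $\Psi_{\text{hdl}}\co N(D)\to N(D')$; by construction it already extends $\phi$ over the collar.

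Second, I would analyze the complements $W_L\setminus N(D)$ and $W_{L'}\setminus N(D')$. Topologically, removing a tubular neighborhood of the co-core of the unique $2$-handle of the Stein trace cancels that $2$-handle in the upside-down handle decomposition, so each complement is diffeomorphic to $D^4$. Symplectically, since we have removed a standard Weinstein $2$-handle co-core neighborhood, each complement is a Weinstein domain with contact boundary $(S^3,\xist)$; by the Gromov--Eliashberg uniqueness of Stein fillings of the standard contact $3$-sphere, up to Stein deformation each complement is symplectomorphic to $(D^4,\wst)$. The data $\phi$ together with $\Psi_{\text{hdl}}|_{\partial}$ canonically assemble into a contactomorphism $\widetilde{\phi}$ of these two boundary $(S^3,\xist)$'s: on the piece away from a neighborhood of $\mu$ we use $\phi$ itself, while on the newly exposed piece $\partial N(D)\cap\overline{W_L\setminus N(D)}$ we use $\Psi_{\text{hdl}}|_{\partial}$, and the ``preassigned'' compatibility from Step 1 ensures they glue continuously along the seam.

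Third, I would invoke the fact that any contactomorphism of $(S^3,\xist)$ extends, possibly after a Stein deformation, to a symplectomorphism of $(D^4,\wst)$. This is again a consequence of Eliashberg's uniqueness of Stein fillings (or equivalently the convex fillability of $(S^3,\xist)$): the pullback of $\wst$ by a Stein filling induced by $\widetilde{\phi}$ on the boundary yields a second Stein filling of $(S^3,\xist)$, which must be Stein deformation equivalent to the original $(D^4,\wst)$, producing the desired extension $\Psi_{\text{ball}}\co W_L\setminus N(D)\to W_{L'}\setminus N(D')$. Finally, gluing $\Psi_{\text{hdl}}$ to $\Psi_{\text{ball}}$ along their common boundary $\partial N(D)$, which matches by design, yields a symplectomorphism $\Phi\co W_L\to W_{L'}$ extending $\phi$.

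The main obstacle I expect is the gluing compatibility: I must ensure that the collar data on both sides of $\partial N(D)$ really align to produce a smooth global symplectomorphism, and that the Stein deformation invoked on the ball piece can be chosen to be supported away from the collar where $\Psi_{\text{hdl}}$ is already fixed. Allowing the Stein structure on one of the traces to be deformed, as the statement permits, is precisely what gives enough flexibility to absorb any mismatch arising from the uniqueness-of-fillings argument, and it is what makes each of the three ingredients (model neighborhood of a Lagrangian disk, uniqueness of Stein fillings of $(S^3,\xist)$, and collar gluing) fit together into a single symplectomorphism.
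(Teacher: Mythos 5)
Your proposal is correct and follows essentially the same route as the paper: apply Lemma~\ref{nbhdoflagrangiandisk} to extend $\phi$ over a neighborhood of the boundary together with the Lagrangian disks, note that the complements are exact (hence minimal) symplectic fillings of $(S^3,\xist)$, and invoke the Gromov--McDuff uniqueness of such fillings, up to deformation, to extend over the remaining balls before gluing. The only difference is organizational: the paper works with a single neighborhood of $\partial W_L\cup D$ rather than treating the boundary collar and the disk neighborhood as separate pieces, and it cites minimality of subsets of Stein manifolds plus Gromov--McDuff where you cite uniqueness of Stein fillings of $(S^3,\xist)$.
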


\begin{rem}\label{rem:akbulut}
Lemma~\ref{extend} is a symplectic analogue of Akbulut's lemma~\cite{Akbulut77}. Note that the topological version of this lemma requires an additional hypothesis; however, in the symplectic framework, the existence of additional geometry suffices to conclude the same result without the hypothesis being required in the statement.\hfill$\Box$
\end{rem}

\begin{proof}
First, we extend the contactomorphism $\phi:\partial W_L\to \partial W_{L'}$ to a symplectomorphism in an open neighborhood of the boundary. By Lemma~\ref{nbhdoflagrangiandisk}, we can further extend this symplectomorphism to a symplectomorphism $\phi':N\longrightarrow N'$ from an open neighborhood $N$ of $\partial W_L\cup D$ in $W_L$ to an open neighborhood $N'$ of $\partial W_{L'}\cup D'$ in $W_{L'}$. It suffices to extend to the complements $B:= W_L\setminus N$ and $B':=W_{L'}\setminus N'$. For that, we notice that -- by choosing the neighborhood $N$ appropriately -- $B$ is a symplectic manifold with convex boundary and $\partial B=S^3$. In consequence, both contact boundaries $\partial B$ and $\partial B'$ are contactomorphic to the standard contact $3$-sphere $(S^3,\xi_{st})$. 

Since $B \subset W_L$ and $B'\subset W_{L'}$ are subsets of a Stein manifold, they are minimal. Under this hypothesis, the Gromov-McDuff Theorem \cite{Gromov85, McDuff90} implies that $B$ and $B'$ are symplectomorphic to a standard symplectic $4$-ball. In particular, possibly after a deformation of one of the Stein structures, they are symplectomorphic \cite[Theorem~16.6]{CieliebakEliashberg12}. This allows us to extend the symplectomorphism $\phi':N\longrightarrow N'$ to an equivalence $\Phi:W_L\longrightarrow W_{L'}$ of the Stein traces, as required.
\end{proof}

Lemma~\ref{extend} can now be applied to Legendrian knots with a contact annulus presentation, discussed in Subsections \ref{annulustwist} and \ref{atwist} above.

\begin{thm}\label{extendunknot}
Let $(A,\gamma)$ be a contact annulus presentation in $(S^3,\xist)$, with the pre-Lagrangian annulus $A$ in the type of the maximal-tb Legendrian unknot. Let $L_1$ be the result of an $1$-fold annulus twist on $L_{A,\gamma}$. Then the Stein traces of $L_{A,\gamma}$ and $L_1$ are equivalent.
\end{thm}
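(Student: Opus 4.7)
The strategy is to apply Lemma~\ref{extend}. Let $\phi\colon L_{A,\gamma}(-1)\to L_1(-1)$ denote the explicit contactomorphism described in Remark~\ref{explicitdiffeom}. By that lemma it is enough to exhibit a single Legendrian meridian $\mu$ of $L_{A,\gamma}$ whose image $\phi(\mu)$ bounds a Lagrangian disk in $W_{L_1}$.

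For the choice of meridian, recall that $L_{A,\gamma}$ is obtained as a Legendrian band sum of the annulus boundary components $K$ and $K'$ along $\gamma$, so the arc $\widetilde{K}:=K\cap L_{A,\gamma}$ lies in $L_{A,\gamma}$. I would pick a point $p\in\widetilde{K}$ that lies outside a small neighborhood of the sub-annulus $A'\subset A$ used in the annulus twist, and let $\mu$ be a small Legendrian meridian of $L_{A,\gamma}$ at $p$. This $\mu$ is simultaneously a Legendrian meridian of $L_{A,\gamma}$ and of $K$, and it bounds the Lagrangian co-core of the Weinstein $2$-handle of $W_{L_{A,\gamma}}$, so the hypothesis of Lemma~\ref{extend} is fulfilled. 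The contact Kirby moves in Remark~\ref{explicitdiffeom}, namely the introduction of a canceling pair on $K_1\cup K_2'$, the handle slide producing $K_2$, and the contact annulus twist of Lemma~\ref{lem:twist}, can all be arranged so as to be supported in an arbitrarily small neighborhood of $A'$. Since $p$ lies outside this neighborhood, the composition $\phi$ fixes $\mu$ pointwise; similarly, $\widetilde{K}\subset L_1$ near $p$, so $\mu$ is also a Legendrian meridian of $L_1$ and $\phi(\mu)=\mu$ as a Legendrian knot in $(S^3,\xist)$ sitting inside $L_1(-1)$.

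To build the required Lagrangian disk in $W_{L_1}$, I would use the maximal-$\tb$ hypothesis on $A$. Since $K$ is a maximal-$\tb$ Legendrian unknot in $(S^3,\xist)$, it classically bounds a Lagrangian disk $D_K\subset(D^4,\lambda_{st})$. A Weinstein neighborhood of $D_K$ is symplectomorphic to a neighborhood of the zero section of $T^*D^2$, and the meridional fibers at interior points of $D_K$ close to $p$ are Lagrangian $2$-disks $D'\subset\mathring{D}^4$ whose boundary circles are small Legendrian meridians of $K$ in $S^3$. One may arrange such a fiber so that $\partial D'=\mu$. Since the interior of $D'$ lies in $\mathring{D}^4$ and $\mu$ is disjoint from $L_1\subset S^3$ (being a meridian of $L_1$), the disk $D'$ is a properly embedded Lagrangian disk in $D^4\subset W_{L_1}$ with $\partial D'=\phi(\mu)\subset\partial W_{L_1}=L_1(-1)$. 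Lemma~\ref{extend} therefore upgrades $\phi$ to a symplectic equivalence $W_{L_{A,\gamma}}\to W_{L_1}$, up to deformation of one of the symplectic structures.

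The main obstacle I anticipate is localising the explicit contactomorphism of Remark~\ref{explicitdiffeom} so that it pointwise fixes the chosen meridian $\mu$ while still carrying $L_{A,\gamma}$ to $L_1$: concretely, verifying that the handle slides and the contact annulus twist can be performed in a neighborhood of $A'$ disjoint from $p$. Once this localisation is in place, the existence of the Lagrangian disk $D_K$ afforded by the maximal-$\tb$ hypothesis, together with standard Weinstein neighborhood theorems for Lagrangian disks, supplies the rest of the argument.
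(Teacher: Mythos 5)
The overall strategy (apply Lemma~\ref{extend} to the explicit contactomorphism of Remark~\ref{explicitdiffeom}) is the same as the paper's, but the key step of your argument is false. You claim that the contact Kirby moves can be localized near $A'$ so that $\phi$ fixes the meridian $\mu$ pointwise and $\phi(\mu)$ is again a Legendrian meridian of $L_1$. No contactomorphism $\phi\colon L_{A,\gamma}(-1)\to L_1(-1)$ can have this property when $L_{A,\gamma}$ and $L_1$ are smoothly distinct: the meridian $\mu$ is isotopic in $L_{A,\gamma}(-1)$ to the core of the glued-in surgery solid torus, so if $\phi(\mu)$ were isotopic to the meridian of $L_1$, then deleting neighborhoods of these cores would produce a homeomorphism $S^3\setminus\nu L_{A,\gamma}\cong S^3\setminus\nu L_1$, and the Gordon--Luecke theorem would force $L_{A,\gamma}$ and $L_1$ to be isotopic knots --- contradicting the hyperbolic volume computations of Theorem~\ref{thm:infinitesurgeries} (and emptying the construction of all content). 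Concretely, the localization fails because the curves $K_1$, $K_2$ and the sub-annulus $A'$ wind once around the core of $A$, running parallel to the strand of $L_{A,\gamma}$ containing your basepoint $p$; the handle slides and the final cancellation of $K_1\cup K_2$ therefore drag $\mu$ around the annulus rather than fixing it. This is exactly the phenomenon underlying Osoinach-type examples, and it is why Theorem~\ref{thm:notExtends} can exhibit contactomorphic Legendrian surgeries with non-homeomorphic traces. The paper instead tracks $\mu$ explicitly through the moves (Figure~\ref{fig:extend}) and finds that $\phi(\mu)$ is a maximal-$\tb$ Legendrian \emph{unknot} in the complement of $L_1$ --- in fact isotopic in $S^3$ to the annulus core $K$, not to a meridian of $L_1$ --- which is precisely where the hypothesis that $A$ is in the type of the maximal-$\tb$ unknot is used: it guarantees that $\phi(\mu)$ bounds a Lagrangian disk in $D^4\subset W_{L_1}$. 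Your argument never genuinely uses that hypothesis, which is a further warning sign.

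A secondary point: even granting your identification of $\phi(\mu)$, the Lagrangian disk you build is not correct as stated. A cotangent fiber of a Weinstein neighborhood of $D_K$ at an \emph{interior} point of $D_K$ lies entirely in $\mathring{D}^4$, so its boundary circle is not a Legendrian meridian in $S^3=\partial D^4$ and the disk is not properly embedded. (The correct, and simpler, statement is that any small Legendrian meridian is a $\tb=-1$ unknot and hence bounds a Lagrangian disk in $(D^4,\lambda_{st})$; but if $\phi(\mu)$ really were a meridian of $L_1$ one would just use the co-core of the $2$-handle, making $D_K$ irrelevant.) The repair is to redo the computation of $\phi(\mu)$ following the diagrams of Remark~\ref{explicitdiffeom}, verify that it is a maximal-$\tb$ unknot lying in $S^3\setminus L_1\subset\partial W_{L_1}$, and only then invoke Lemma~\ref{extend}.
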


\begin{proof}
Let us denote $L_0:=L_{A,\gamma}$ for simplicity, and consider the Weinstein handlebody diagram for $W_{L_{A,\gamma}}$ shown in the upper left of Figure~\ref{fig:extend}, where the co-core $\mu$ of the Weinstein $2$-handle is also shown. The remaining three diagrams in Figure~\ref{fig:extend} depict a contactomorphism $\phi:L_0(-1)\longrightarrow L_1(-1)$, as described in Remark~\ref{explicitdiffeom}.
\begin{figure}[htbp]{\small
  \begin{overpic}%[grid,tics=10] 
  {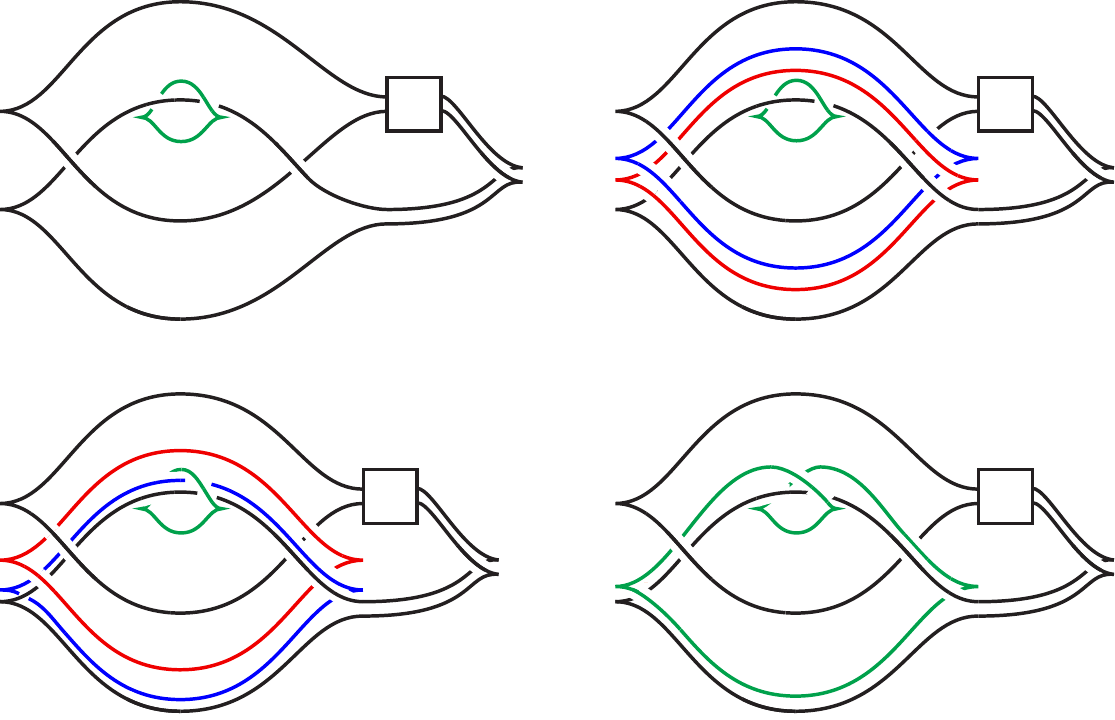}
    \put(116, 176){$\gamma$}
    \put(49, 160){\color{darkgreen}$\mu$}
     \put(90, 120){$L_{A,\gamma}$}
     \put(7, 120){$(-1)$}
     \put(288, 176){$\gamma$}
     \put(157, 161){$\color{blue} (-1)$}
     \put(165, 152){$\color{red} (1)$}
    \put(227, 160){\color{darkgreen}$\mu$}
     \put(259, 120){$L_{A,\gamma}$}
     \put(186, 120){$(-1)$}
     \put(-21, 35){$\color{blue} (-1)$}
     \put(-13, 45){$\color{red} (1)$}
     \put(110, 63){$\gamma$}
    \put(49, 46){\color{darkgreen}$\mu$}
     \put(90, 8){$L_{A,\gamma}$}
     \put(7, 8){$(-1)$}
      \put(287, 63){$\gamma'$}
    \put(223, 46){\color{darkgreen}$\phi(\mu)$}
     \put(261, 8){$L_{A,\gamma}$}
     \put(184, 8){$(-1)$}
  \end{overpic}}
  \caption{The diagrams used in the proof of Theorem \ref{extendunknot}, showing that $\phi(\mu)$ bounds a Lagrangian disk.}
  \label{fig:extend}
\end{figure}
Specifically, in the upper right diagram, a canceling pair of contact $(\pm1)$-surgeries is added, which does not affect the contactomorphism type by Lemma~\ref{cancellationlemma}. The diagram depicted in the lower left shows a handle slide of the $(-1)$-framed unknot over the Legendrian knot $L_{A,\gamma}$ followed by an isotopy. Finally, in the lower right diagram, we have depicted the result of the contact annulus twist on $L_0$ and the image of the meridian $\mu$. Note that the Legendrian arc $\gamma'$ depicted in Figure~\ref{fig:extend} is obtained from the Legendrian arc $\gamma$ by a contact annulus twist. Now, the image $\phi(\mu)$ is a maximal-tb Legendrian unknot in the boundary of the standard symplectic $4$-ball, and thus it bounds an embedded Lagrangian disk. Hence, we can apply Lemma~\ref{extend} in order to extend the contactomorphism $\phi$ to an equivalence $\Phi:W_{L_0}\longrightarrow W_{L_1}$ of the Stein traces $W_{L_0}$ and $W_{L_1}$, as claimed.
\end{proof}

\begin{rem}
	Theorem~\ref{extendunknot} remains true for any pre-Lagrangian annulus $A$ in the type of a Legendrian knot $L$ in $(S^3,\xist)$ that bounds a Lagrangian disk in the standard symplectic $4$-ball. Indeed, the above proof shows that $\phi(\mu)$ is again isotopic to $L$ in the boundary of the standard symplectic $4$-ball and thus will bound again a Lagrangian disk such that Lemma~\ref{extend} applies. \hfill$\Box$
	
	%Here we see again as noted in Remark~\ref{rem:akbulut} that the extra rigidity coming from symplectic and contact geometry implies results that are not easily observed by only working in the smooth category. 
\end{rem}

Theorem~\ref{thm:main}.(i) now follows.

\begin{proof}[Proof of Theorem~\ref{thm:main}.(i)]
Consider the infinite family of Legendrian knots $L_n$ in $(S^3,\xi_{st})$ build in Theorem~\ref{thm:infinitesurgeries}. Each $L_n$ is the result of an $n$-fold contact annulus twist applied to the knot $L_{A,\gamma}$ in Figure~\ref{fig:mainex}, whose contact annulus presentation $(A,\gamma)$ indeed satisfies that $A$ is in the type of the standard Legendrian unknot. Theorem~\ref{thm:infinitesurgeries} states that the Legendrian $(-1)$-surgeries are contactomorphic. By construction, each $L_n$ is obtained from $L_{n-1}$ by a $1$-fold contact annulus twist. Hence, the hypothesis of Theorem \ref{extendunknot} are satisfied, and we deduce from Theorem~\ref{extendunknot} that the Stein traces of the $L_n$ must all be equivalent. 
\end{proof}

We now turn to Theorem~\ref{thm:main}.(ii).

%%%%%%%%%%%%%%%%%%%%%%%%%%%%%%%%%%%%%%%%%%%%%%
\subsection{Contact $(*n)$ moves}\label{starmove}
%%%%%%%%%%%%%%%%%%%%%%%%%%%%%%%%%%%%%%%%%%%%%%
This section proves one half of Theorem~\ref{thm:main}.(ii), showing that for any $m\leq 1$, $m\in \Z$, we can construct distinct Legendrian knots $L_m$ and $L'_m$ in $(S^3,\xist)$, both with $\tb(L_m)=\tb(L'_m)=m$, such that their Stein traces are equivalent, i.e. $W_{L_m}$ is Stein equivalent to $W_{L'_m}$. For that, we define the contact $(*n)$ move, adapting the topological $(*n)$ move developed in \cite[Section 3]{AbeJongLueckeOsoinach15} to the contact setting. 

In line with Subsection~\ref{strace}, let $(A,\gamma)$ be a contact annulus presentation in the standard contact $(S^3,\xist)$, with the pre-Lagrangian annulus $A$ in the type of the maximal-tb Legendrian unknot. Consider the associated Legendrian knot $L_{A,\gamma}$, depicted in the front diagram as in the upper left of Figure~\ref{fig:extend}, and let $U$ in $(S^3,\xist)$ be the Legendrian unknot depicted in green in this same front diagram. Note that $U$ is a standard Legendrian unknot but links $L_{A,\gamma}$ in a specific manner. In addition, for $n\in\N_0$, any of the contact $3$-manifolds $(U(1+1/n),\xi_k)$, $k\in\{-n+1, -n+3, \ldots, n-1\}$, following Theorem~\ref{surgerytoS3}, is endowed with a contactomorphism $\phi_k \colon(U(1+1/n),\xi_k)\longrightarrow(S^3,\xi_{st})$. In the notation above, we define the following two moves.

\begin{defi}\label{def:contactmoves}
A Legendrian knot $L'_k$ in $(S^3,\xist)$ is said to be obtained by a \textit{contact $(T_n)$ move} from $L_{A,\gamma}$ if it is the image $\phi_k(L_{A,\gamma})$ of the Legendrian knot $L_{A,\gamma}$ under the contactomorphism $\phi_k\colon(U(1+1/n),\xi_r)\longrightarrow(S^3,\xi_{st})$. 

\noindent A Legendrian knot $L''_k$ in $(S^3,\xist)$ is said to be obtained by a \textit{contact $(*n)$ move} from $L_{A,\gamma}$ if it is the result of a contact annulus twist on $L_{A,\gamma}$ followed by a contact $(T_n)$ move.\hfill$\Box$
\end{defi}

The moves in Definition \ref{def:contactmoves}, in conjunction with Theorem~\ref{extendunknot}, can now be used to conclude the first half ($\tb\leq1$) of the proof of Theorem~\ref{thm:main}.(ii), as follows. The other cases $(\tb>1)$ will be discussed in Section~\ref{sec:RGB}.

\begin{proof}[Proof of Theorem~\ref{thm:main}.(ii) $($\mbox{The case of }$\tb\leq1)$]
Let $(A,\gamma)$ be a contact annulus presentation in $(S^3,\xist)$, with the pre-Lagrangian annulus $A$ in the type of the maximal-tb Legendrian unknot and, given $m\leq 0$, $m\in\Z$, let $L_m$ in $(S^3,\xist)$ be the Legendrian knot $L_{A,\gamma}$ in $(S^3,\xist)$ stabilized $|m|$ times. For instance, we may choose $L_{A,\gamma}$ as in Figure \ref{fig:mainex}. As noted in Remark~\ref{effectstab}, $L_m$ can obtained from the Legendrian $L_{A,\gamma}$ by performing contact $(1+1/m)$-surgery on a meridian $\mu$, if one chooses the appropriate contact structure $\xi_k$ on the surgery.

Let $L'_m$ in $(S^3,\xist)$ be the result of a contact annulus twist on $L_{A,\gamma}$ followed by a contact $(T_n)$ move on the image $\mu'$ of $\mu$ under the contact annulus twist, as depicted in Figure~\ref{fig:extend}. That is, in the language of Definition \ref{def:contactmoves}, $L'_m$ is obtained from $L_{A,\gamma}$ by a contact $(*m)$ move. Lemma~\ref{lem:twist} implies that the contact $(-1)$-surgery on $L_{A,\gamma}$ and contact $(1+1/m)$-surgery on $\mu$ is contactomorphic to contact $(-1)$-surgery on the contact annulus twisted $L_{A,\gamma}$ and contact $(1+1/m)$-surgery on $\mu'$, if we use the same contact structures on the $(1+1/m)$-surgeries. Now, by Theorem~\ref{surgerytoS3}, the first contact manifold is the result of contact $(-1)$-surgery on $L_m$ in $(S^3,\xist)$, and the second contact structure is the result of contact $(-1)$-surgery on $L'_m$. Hence, these contact structures are contactomorphic. 

It now suffices to argue that this contactomorphism extends over the Stein traces. Indeed, if one takes two meridians $\mu_1$ and $\mu_2$ to $L_{A,\gamma}$ that are unlinked, then their images under the contact annulus twist are unlinked. This can be pictorially verified by following the moves in Figure~\ref{fig:extend} with both meridians. In consequence, when one performs the contact $(T_m)$ move on $\mu_1$, $\mu_2$ is still a maximal-tb standard Legendrian unknot and Lemma~\ref{extend} gives the desired result.

It remains to show that $\tb(L'_m)=1-m$ and that $L'_m$ and $L_m$ are not isotopic. For the first equation, we use Figure~\ref{fig:extend} bottom right to compute that the linking number of the two knots is $1$ and thus the formula from~\cite[Lemma 6.2]{Kegel18} implies that $\tb(L'_m)=\tb(L_{A,\gamma})-m=1-m$. 

For the second statement, we will use the example from Figure \ref{fig:mainex} and show that in that example $L'_m$ and $L_m$ are not smoothly isotopic. For that we use again SnapPy and the limit formula for the volume from~\cite{NeumannZagier85} to show that the volume of $L'_m$ is always larger than the volume of $L_m$. (Note that the $L_m$ are all smoothly isotopic to $L_0$ and thus all have the same volume.) These computations are saved at~\cite{CasalsEtnyreKegel}.
\end{proof}

\begin{figure}[htbp] 
{\small
  \begin{overpic}%[grid,tics=10] 
  {}
    \put(120, 337){$(-1)$}
    \put(100, 275){\color{orange}$(1+\frac 1m)$}
     \put(350, 337){$(-1)$}
      \put(258, 440){\color{orange}$(1+\frac 1m)$}
      \put(313, 388){$\color{blue} (+1)$}
     \put(206, 369){$\color{red} (-1)$}
     \put(101, 431){$L_m$}
     \put(170, 390){$\cong$}
     \put(48, 417){$m$}
     \put(130, 178){$(-1)$}
      \put(350, 178){$(-1)$}
         \put(300, 290){\color{orange}$(1+\frac 1m)$}
           \put(205, 215){$\color{blue} (+1)$}
     \put(312, 230){$\color{red} (-1)$}
        \put(130, 15){$(-1)$}
      \put(350, 15){$(-1)$}
      \put(62.5, 69){\tiny $\color{npink} m$}
      \put(82, 67){ \tiny$\color{npink} (-1)$}
      \put(24, 70){\tiny $\color{orange} (-1)$}
       \put(278.5, 69){\tiny $m$}
        \put(330, 100){$L_m'$}
        \put(55, 153){\begin{rotate}{-90}$\cong$\end{rotate}}
  \end{overpic}}
	\caption{An example of a contact $(*n)$ move. The box stands for an $m$-fold stabilization, except on the bottom right where it stands for the Legendrian $3$-copy of the $m$-fold stabilization of an arc.}
	\label{fig:examplenmove}
\end{figure}

\begin{ex}
	Figure~\ref{fig:examplenmove} shows an example of a contact $(*n)$ move starting with the contact annulus presentation from Figure~\ref{fig:mainex}. The sequence of moves is following the arguments from the above proof and yields an explicit contactomorphism from $L_m(-1)$ to $L'_m(-1)$. Also note it is straightforward to compute in the  front projections of $L_m$ and $L'_m$ that both knots have $\tb=1-m$. 
	
	Finally, we remark that computer experiments suggest that $L'_m$ is always a stabilized knot. However, the destabilization is not straightforward to see. For example for $m=1$ we have transformed the front projection of $L'_1$ to the corresponding grid diagram. Via gridlink we could find a sequence of moves to a grid that corresponds to a destabilizable front projection. The data can be found at~\cite{CasalsEtnyreKegel}. \hfill$\Box$
\end{ex}

%%%%%%%%%%%%%%%%%%%%%%%%%%%%%%%%%%%%%%%%%%%%%%
\subsection{Additional constructions}\label{sec:other}
%%%%%%%%%%%%%%%%%%%%%%%%%%%%%%%%%%%%%%%%%%%%%%

In this section we present a few more ways to construct examples of non-isotopic Legendrian knots with the same Stein trace or the same Legendrian surgeries. In the smooth context, both dualizable patterns and RGB-diagrams generalize (special) annulus twists, see e.g.~\cite{MillerPiccirillo18,Tagami2,Tagami1}, especially~\cite[Section 6]{MillerPiccirillo18} and~\cite[Theorem 4.8]{Tagami1}. Let us present the contact and symplectic analogues of dualizable patterns and RGB-diagrams, with the connection between them and contact annulus twist left for future work.

\subsubsection{Dualizable patterns}
In \cite{Brakes80}, Brakes introduced dualizable patterns for smooth knots as a method to create pairs of knots with diffeomorphic zero surgeries. In~\cite{MillerPiccirillo18}, smooth dualizable patters are extensively studied, and a criterion for a pattern to be dualizable is given. In addition, the relation to the smooth annulus twist is discussed in~\cite{MillerPiccirillo18}, and see also~\cite[Section 3.2]{Tagami2}. In our context, Legendrian dualizable patterns can be defined as follows.

Let $V=(S^1\times D^2, \xi)$ be the solid torus with the unique tight contact structure having convex boundary with two longitudinal dividing curves. Note that any Legendrian knot has a neighborhood contactomorphic to $V$. By definition, a Legendrian knot $P$ in $V$ is called a {\it Legendrian pattern}. Given a Legendrian knot $L$ in $(S^3,\xist)$, we denote by $P(L)$ in $(S^3,\xist)$ the (satellite) Legendrian knot obtained by first embedding $P$ into $V$ and then $V$ into $(S^3,\xist)$ as the standard neighborhood of $L$. We denote a closed standard neighborhood of $P$ in $V$ by $\nu P$. 

\begin{defi}\label{def:LegDualizablePatterns}
Let $P$ in $V$ be a Legendrian pattern. A Legendrian pattern $P^*$ in $V^*$ is called the {\it dual pattern} of $P$ if there exists a contactomorphism $\phi:V\setminus\mathring{\nu P}\longrightarrow V^*\setminus\mathring{\nu P^*}$ between the contact complement $V\setminus\mathring{\nu P}$ and the contact complement $V^*\setminus\mathring{\nu P^*}$ such that $\phi$ restricted to the respective boundaries acts as:
\begin{align*}
\lambda_V&\longmapsto\lambda_{P^*} & & \lambda_P\longmapsto \lambda_{V^*}\\
\mu_V&\longmapsto -\mu_{P^*}& &\mu_P\longmapsto -\mu_{V^*},
\end{align*}
where $\mu_P$ denotes the meridian to $P$, $\lambda_P$ is the longitude for $\nu P$ determined by the contact framing, $\mu_V$ is the boundary of $\{pt\}\times D^2$ in $V$ and $\lambda_V$ is a curve parallel to the dividing set of $\partial V$. (Analogous definitions for $P^*$ in $V^*$.) We call $P$ a \textit{dualizable pattern} if there exists a dual pattern $P^*$ for $P$. \hfill$\Box$ 
\end{defi}

The Legendrian dualizable patterns in Definition~\ref{def:LegDualizablePatterns} can be used to construct Legendrian knots with the same Stein traces. The necessary result is the following proposition:

\begin{prop} \label{prop:dualizablepatterns}
	Let $U$ in $(S^3,\xist)$ be the Legendrian unknot with $\tb=-1$ and $P$ a dualizable pattern with dual pattern $P^*$. Then, the two Legendrian knots $P(U)$ and $P^*(U)$ in $(S^3,\xist)$ have the equivalent Stein traces.
\end{prop}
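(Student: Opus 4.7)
The plan is to apply Lemma~\ref{extend}. First produce a contactomorphism $\Psi\colon L_{P(U)}(-1)\to L_{P^*(U)}(-1)$, and then identify a Legendrian meridian $\mu$ bounding the Lagrangian co-core of the Weinstein $2$-handle in $W_{P(U)}$ whose image under $\Psi$ bounds a Lagrangian disk in $W_{P^*(U)}$.

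Since $U$ has $\tb=-1$, the standard contact sphere decomposes as $(S^3,\xist)=\nu U\cup V'$, where $\nu U\cong V$ is the standard tight neighborhood of $U$ and $V'=S^3\setminus\mathring{\nu U}$ is the standard tight neighborhood of the dual Legendrian unknot $U^*$, itself of $\tb=-1$. With $P(U)\subset\nu U$, one has
\[
L_{P(U)}(-1)\;=\;\big(V\setminus\mathring{\nu P}\big)\;\cup_{\partial V}\;V'\;\cup_{\partial\nu P}\;H,
\]
where $H$ is the Weinstein Legendrian-surgery solid torus filling $\partial\nu P$ with meridional slope $\lambda_P-\mu_P$, and the Heegaard gluing expresses the meridian of $V'$ as $\mu_{V'}=\mu_V+\lambda_V$ on $\partial V$ (using $\tb(U)=-1$). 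Applying $\phi\colon V\setminus\mathring{\nu P}\to V^*\setminus\mathring{\nu P^*}$ to the first factor and using the dualizability relations from Definition~\ref{def:LegDualizablePatterns}, a direct calculation yields $\phi(\mu_{V'})=\lambda_{P^*}-\mu_{P^*}$, which is precisely the Legendrian-surgery slope on $P^*$, and $\phi(\lambda_P-\mu_P)=\lambda_{V^*}+\mu_{V^*}$, which is precisely the slope along which $V'$ attaches to $\partial V^*$ in the analogous decomposition of $L_{P^*(U)}(-1)$. Defining $\Psi$ to equal $\phi$ on the outer piece, and to identify the two filling solid tori $V'\leftrightarrow H'$ on $\partial\nu P^*$ and $H\leftrightarrow V'$ on $\partial V^*$, then provides the desired contactomorphism. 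The matching of tight contact structures on these fillings reduces to the uniqueness, with prescribed boundary dividing data, of both a Weinstein surgery handle and a standard Legendrian-neighborhood solid torus.

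For the second step, $\mu\subset L_{P(U)}(-1)$ bounds the Lagrangian co-core of the Weinstein $2$-handle, hence it is Legendrian isotopic to the core of $H$. By construction $\Psi$ carries $H$ to $V'\subset L_{P^*(U)}(-1)$, and therefore carries the core of $H$ to the core of $V'$, which is the Legendrian unknot $U^*\subset S^3\subset\partial W_{P^*(U)}$. As a standard $\tb=-1$ Legendrian unknot in $(S^3,\xist)$, the knot $U^*$ bounds a Lagrangian disk in the standard symplectic ball $B^4\subset W_{P^*(U)}$.

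Both hypotheses of Lemma~\ref{extend} are thus satisfied, and one obtains a symplectomorphism $W_{P(U)}\to W_{P^*(U)}$, possibly after deforming one of the Stein structures. The main technical obstacle is the bookkeeping in the first step: one must verify that the slopes on $\partial V$ and $\partial\nu P$ transform under $\phi$ in exactly the way that exchanges the Weinstein surgery handle $H$ and the complementary tight solid torus $V'$, both at the level of dividing curves and at the level of the unique tight contact fillings. This is precisely what the dualizability relations of Definition~\ref{def:LegDualizablePatterns} are designed to encode, so the verification, while a little tedious, is deterministic.
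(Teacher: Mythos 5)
Your argument is correct and follows essentially the same route as the paper's proof: both express $P(U)(-1)$ as the double Dehn filling of $V\setminus\mathring{\nu P}$ along $\lambda_P-\mu_P$ and $\lambda_V+\mu_V$, use the dualizability relations to see that $\phi$ carries these filling slopes to $\lambda_{V^*}+\mu_{V^*}$ and $\lambda_{P^*}-\mu_{P^*}$ (thereby exchanging the surgery torus with the complementary solid torus), and then feed the resulting contactomorphism into Lemma~\ref{extend} after checking that the image of the meridian of $P(U)$ bounds a Lagrangian disk. The only cosmetic difference is that you track the image of the co-core boundary as the core of the complementary solid torus (the dual unknot $U^*$), whereas the paper records it as the isotopic curve $\mu_{V^*}$; both are a maximal-$\tb$ Legendrian unknot bounding a Lagrangian disk in the $4$-ball.
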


\begin{proof}
Let us choose a framing on the solid tori $V$ and $V^*$ such that $\lambda_V$ and $\lambda_{V^*}$ have slope $-1$. Note that $(S^3,\xist)$ is obtained from $V$ by Dehn filling along the curve $\lambda_V+\mu_V$, i.e. along a curve of slope 0, and then extending the contact structure over the filling torus so that it is tight (and there is a unique way to do this). Thus, contact Dehn surgery on $P(U)(-1)$ is simply the result of Dehn filling the contact complement $V \setminus\mathring{\nu P}$ with slopes  $\lambda_P-\mu_P$ and $\lambda_V+\mu_V$ of $V$. Since $P$ is dualizable, the last manifold is contactomorphic to the Dehn filling of $V^*\setminus\mathring{\nu P^*}$ with slopes $\mu_{V^*}+\lambda_{V^*}$ and $\lambda_{P^*}-\mu_{P^*}$ which is again the same as Legendrian surgery on $P^*(U)$. Now, the contactomorphism can be chosen to take the meridian of $P(U)$ to $\mu_{V^*}$ in $P^*(U)(-1)$ which bounds a Lagrangian disk in the associated Stein trace. Thus Lemma~\ref{extend} implies that such a contactomorphism must extend to a symplectomorphism of the Stein traces, possibly after a deformation. The conclusion follows.
\end{proof}

The construction of Legendrian dualizable patterns in the contact setting can be done similar to the smooth construction developed in~\cite{MillerPiccirillo18}. For that, we denote by $\Gamma\colon V\rightarrow (S^1\times S^2,\xist)$ the embedding of a solid torus $V$ as the tubular neighborhood of the Legendrian curve $\hat \lambda_V$, shown in Figure~\ref{fig:1-handle}, and by $\widehat P$ the Legendrian knot in $(S^1\times S^2,\xist)$ obtained by embedding $P$ into $V$, and then $V$ into $(S^1\times S^2,\xist)$ via $\Gamma$. %Namely, we are compactifying the solid torus $V$ to $(S^1\times S^2,\xist)$ and correspondingly satelliting the pattern; note that the compactifying piece we are adding is also a standard tight solid torus.

\begin{lem}\label{lem:dualizable}
Suppose that the Legendrian knot $\widehat P$ in $(S^1\times S^2,\xist)$ is Legendrian isotopic to $\hat\lambda_V$ in $(S^1\times S^2,\xist)$. Then, $P$ is dualizable with dual pattern $P^*=\hat\lambda_V$, seen as a pattern in the contact complement $V^*=(S^1\times S^2,\xist)\setminus \Gamma(V)$.
\end{lem}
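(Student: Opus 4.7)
My plan is to produce the required contactomorphism $\phi$ by upgrading the assumed Legendrian isotopy $\widehat P\simeq \hat\lambda_V$ in $(S^1\times S^2,\xist)$ to an ambient contactomorphism of $(S^1\times S^2,\xist)$, and then restricting it to complements. The key geometric observation is that $(S^1\times S^2,\xist)=\Gamma(V)\cup V^*$ is a genus-$1$ Heegaard decomposition, and the pattern $P^*=\hat\lambda_V\subset V^*$ should be interpreted as the Legendrian core of $V^*$; by the symmetry of this decomposition, this Legendrian knot sits in the same Legendrian isotopy class as $\hat\lambda_V$ itself, hence under the hypothesis as $\widehat P$.

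First, by the contact isotopy extension theorem (Gray stability applied to a family of Legendrian knots), I would extend the given Legendrian isotopy to an ambient contact isotopy $\Phi_t\colon(S^1\times S^2,\xist)\to(S^1\times S^2,\xist)$ with $\Phi_0=\mathrm{id}$ and $\Phi_1(\widehat P)=P^*$. Using the standard neighborhood theorem for Legendrian knots, together with Giroux's and Honda's classification of tight contact structures on solid tori and thickened tori, I would further adjust $\Phi_1$ so that $\Phi_1(\Gamma(V))=V^*$ and $\Phi_1(\Gamma(\nu P))=\nu P^*$ for a prescribed standard tubular neighborhood $\nu P^*\subset V^*$. Then $\Phi_1$ restricts to a contactomorphism $\Gamma(V)\setminus\mathring{\Gamma(\nu P)}\to V^*\setminus\mathring{\nu P^*}$, and precomposing with $\Gamma$ yields the desired contactomorphism
$$\phi\colon V\setminus\mathring{\nu P}\longrightarrow V^*\setminus \mathring{\nu P^*}.$$

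Second, I would verify the four boundary identifications in the definition of a dual pattern. Since $\Phi_1$ interchanges the two Heegaard solid tori $\Gamma(V)$ and $V^*$, its restriction to the shared Heegaard torus $\partial\Gamma(V)=\partial V^*$ exchanges meridian and longitude (this is precisely the defining gluing of the genus-$1$ Heegaard splitting of $S^1\times S^2$). Tracking this on $\partial V$ gives $\phi(\mu_V)=-\mu_{P^*}$ and $\phi(\lambda_V)=\lambda_{P^*}$; on $\partial(\nu P)$, the ambient contactomorphism preserves the contact framing and sends meridian disks of $\nu P$ to meridian disks of $\nu P^*$, so $\phi(\lambda_P)=\lambda_{V^*}$ and $\phi(\mu_P)=-\mu_{V^*}$, as required.

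The main obstacle I anticipate is the simultaneous matching of both neighborhood scales in the construction of $\Phi_1$: a naive application of isotopy extension only produces a contactomorphism of small standard tubular neighborhoods of $\widehat P$ and $P^*$, whereas I need to grow this matching all the way to the maximal Heegaard-sized neighborhoods $\Gamma(V)$ and $V^*$. Controlling the contact structure on the thickened torus shells between the small and large neighborhoods on each side is what makes the upgrade go through, via the uniqueness of tight contact structures on $T^2\times I$ with specified convex boundary data.
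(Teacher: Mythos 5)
Your construction goes wrong at the very first step, in the identification of the dual pattern. In this lemma $P^*=\hat\lambda_V$ is \emph{not} the core of the complementary solid torus $V^*$: it is the original core of $\Gamma(V)$, viewed as a knot in the solid torus $V^*=(S^1\times S^2,\xist)\setminus\mathring{\nu\widehat P}$ (which is a solid torus precisely because $\widehat P$ is Legendrian isotopic to $\hat\lambda_V$). As a pattern in $V^*$ this knot is in general complicated --- if it were the core, the dual satellite $P^*(U)$ would always be the unknot, contradicting Figure~\ref{fig:dual_patern} and rendering Proposition~\ref{prop:dualizablepatterns} vacuous. The paper's proof does not use an ambient contactomorphism at all: it reads the two-component link complement $(S^1\times S^2)\setminus\nu(\widehat P\cup\hat\lambda_V)$ in two ways, once as $V\setminus\mathring{\nu P}$ (using that the complement of a standard neighborhood of $\hat\lambda_V$ is again a standard solid torus containing $\widehat P$ as the pattern $P$) and once as $V^*\setminus\mathring{\nu P^*}$, and takes $\phi$ to be the resulting change of parametrization.

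Second, even granting your $\Phi_1$, the map you obtain cannot satisfy Definition~\ref{def:LegDualizablePatterns}. Any restriction of an ambient contactomorphism with $\Phi_1(\Gamma(V))$ a solid torus containing $\Phi_1(\Gamma(\nu P))$ sends the outer boundary $\partial V$ to the outer boundary of the target and the inner boundary $\partial(\nu P)$ to the inner one, whereas duality demands that the two boundary components be \emph{exchanged}: $\partial V\mapsto\partial(\nu P^*)$ and $\partial(\nu P)\mapsto\partial V^*$; this exchange is exactly what makes the construction nontrivial. Moreover, with your identification the target $V^*\setminus\mathring{\nu P^*}$ is a $T^2\times[0,1]$, which is not even homeomorphic to $V\setminus\mathring{\nu P}$ unless $P$ is isotopic to the core of $V$. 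Finally, the parenthetical justifying your framing computation is false: the genus-one Heegaard gluing of $S^1\times S^2$ identifies meridian with meridian (it is $S^3$ whose gluing exchanges meridian and longitude), and in any case the required boundary behavior is $\lambda\mapsto\lambda$, $\mu\mapsto-\mu$ between the \emph{swapped} boundary components, not a meridian--longitude exchange on a single torus.
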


\begin{proof}
Let us choose a framing on the solid torus $V$ such that the Legendrian curve $\lambda_V$ has slope $0$. Then, the contact 3-manifold $(S^1\times S^2,\xist)$ is obtained from $V$ by first Dehn filling the solid torus $V$ with slope $\infty$, and then extending the contact structure to the surgery torus so that it is tight. Notice that $V\subset S^1\times S^2$ is a neighborhood of $\hat\lambda_V$ and $(S^1\times S^2)\setminus V$ is contactomorphic to $V$. Now, if a Legendrian knot $\widehat P$ is isotopic to $\hat\lambda_V$, then $V^*=(S^1\times S^2)\setminus  \nu \hat P$ is also contactomorphic to $V$. We set $P^*=\hat \lambda_V$ in the complement solid torus $V^*$. In this case, $(S^1\times S^2)\setminus \nu(\widehat P\cup P^*)$ is contactomorphic to both $V\setminus \nu(P)$ and $V^*\setminus \nu(P^*)$ and it is readily verified that the contactomorphism from $V\setminus \nu(P)$ to $V^*\setminus \nu(P^*)$ exchanges the meridians and longitudes as required.
\end{proof}

\begin{center}
	\begin{figure}[h!]
	{\small
  \begin{overpic}%[grid,tics=10] 
  {}
    \put(165, 45){\color{darkgreen}$\hat\lambda_V$}
  \end{overpic}}
		\caption{The Legendrian knot $\hat\lambda_V$ in $(S^1\times S^2,\xist)$. The two $3$-balls represent a single Weinstein $1$-handle and thus the diagram displays the unique Stein fillable contact structure on $S^1\times S^2$.}
		\label{fig:1-handle}
	\end{figure}	
\end{center}

Lemma~\ref{lem:dualizable} is the contact analogue of~\cite[Proposition 3.5]{MillerPiccirillo18}. The analogue of~\cite[Proposition 6.3]{MillerPiccirillo18} should also hold in our context, i.e. given a Legendrian knot $L$ with an annulus presentation $(A,\gamma)$ where $A$ is a pre-Lagrangian annulus obtained form the maximal Thurston-Bennequin invariant unknot and $L'$ obtained from $L$ by a $1$-fold annulus twist, there is a dualizable pattern realizing $L$ with dual giving $L'$. This can be done by finding an appropriate Legendrian unknot associated to the annulus $A$ and $L$ in its complement will be the pattern. A detailed discussion of this relation and applications are left for future work. 

\begin{ex}
	We present a simple example, which is inspired by the topological example from~\cite{MillerPiccirillo18}. We consider the Legendrian pattern $P$ and its Legendrian satellite $P(U)$ depicted in Figure~\ref{fig:dualizablepattern}. Figure~\ref{fig:dualpattern} demonstrates that $\widehat P$ is Legendrian isotopic to $\hat\lambda_V$ in $(S^1\times S^2,\xist)$ and it follows by using Lemma~\ref{lem:dualizable} that $P$ is dualizable. Thus Proposition~\ref{prop:dualizablepatterns} implies that $P(U)$ and its dual Legendrian satellite $P^*(U)$ have equivalent Stein traces. While we have not constructed an explicit front of the Legendrian knot $P^*(U)$, it is not hard to see that $P(U)$ and $P^*(U)$ are not smoothly isotopic. (For a smooth knot diagram of $P^*(U)$ see Figure~\ref{fig:dual_patern}.)
	\hfill$\Box$
	\end{ex}

\begin{figure}[htbp] 
{\small
  \begin{overpic}%[grid,tics=10] 
  {}
    \put(100, 100){$P$}
    \put(310, 75){$P(U)$}
  \end{overpic}}
	\caption{A Legendrian dualizable pattern $P$ and its Legendrian satellite $P(U)$.}
	\label{fig:dualizablepattern}
\end{figure}

\begin{figure}[htbp] 
{\small
  \begin{overpic}%[grid,tics=10] 
  {}
  \end{overpic}}
	\caption{A sequence of Reidemeister moves shows that $\widehat P$ is Legendrian isotopic to $\hat\lambda_V$ in $(S^1\times S^2,\xist)$. Here the sequence goes from left to right and top to bottom.}
	\label{fig:dualpattern}
\end{figure}

\begin{figure}[htbp] 
	{\small
		\begin{overpic}%[grid,tics=10] 
			{}
			\put(110, 115){$-2$}
			\put(185, 75){$P^*(U)$}
	\end{overpic}}
	\caption{The smooth isotopy type of $P^*(U)$. Here the box labeled $-2$ represent two left-handed full twists.}
	\label{fig:dual_patern}
\end{figure}

\subsubsection{RGB links \& Cancelling 1-and 2-handles} \label{sec:RGB}
In~\cite{Piccirillo19}, the use of RGB links was introduced to construct pairs of knots with the same surgeries, see also~\cite[Section 4]{Tagami1}. This readily generalizes to the setting of contact surgeries, and here we record the definitions and basic results. As an application we will also construct pairs of Legendrian knots with any possible Thurston--Bennequin invariant that share the same Stein trace and thus finish the proof of Theorem~\ref{thm:main}.(ii).

\begin{defi}\label{def:RGB}
A 3-component Legendrian link $R\cup G\cup B$ in $(S^3,\xist)$ is said to be a {\it Legendrian RGB link} if the following properties hold:
\begin{enumerate} 
	\item $R\cup G$ is Legendrian isotopic to $\mu_G\cup G$, where $\mu_G$ is a meridian to $G$ with $\tb=-1$,
	\item $R\cup B$ is Legendrian isotopic to $\mu_B\cup B$, where $\mu_B$ is a meridian to $B$ with $\tb=-1$.
\end{enumerate}
By definition, $K_G$ in $(S^3,\xist)$ is the Legendrian knot obtained from $G$ in $(S^3,\xist)$ by performing a contact $(+1)$-surgery on $R$ and a contact $(-1)$-surgery on $B$. Similarly, $K_B$ in $(S^3,\xist)$ is the Legendrian knot obtained from $B$ in $(S^3,\xist)$ by performing a contact $(+1)$-surgery on $R$ and a contact $(-1)$-surgery on $G$. \hfill$\Box$
\end{defi}

Note that $R$ in $(S^3,\xist)$ must be a maximal-tb unknot, and thus the contact $(+1)$-surgery on $R$ will cancel the contact $(-1)$-surgery on either $G$ or $B$. Indeed, $R$ is isotopic to a Legendrian push-off of $G$ or $B$ after one has surgered $G$ or $B$ as we can see by performing a single handle slide. Hence, performing the given surgeries on $R$ and $B$ (or $R$ and $G$) yields the standard contact $3$-sphere $(S^3,\xist)$. These particular configurations of knots, forming a link as in Definition~\ref{def:RGB}, are useful thanks to the following result.

\begin{thm}\label{thm:easySteintraces}
Let $R\cup G\cup B$ in $(S^3,\xist)$ be a Legendrian RGB link. Then the Stein traces of $K_G$ and $K_B$ are Stein equivalent.
\end{thm}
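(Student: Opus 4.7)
The plan is to invoke Lemma~\ref{extend}, the symplectic analogue of Akbulut's extension trick.

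First, I would observe that $K_G(-1)$ and $K_B(-1)$ are naturally contactomorphic to a common contact $3$-manifold $N$. By definition, $K_G$ is obtained from $G$ by performing contact $(+1)$-surgery on $R$ and contact $(-1)$-surgery on $B$; the RGB conditions together with the Cancellation Lemma~\ref{cancellationlemma} ensure that these two surgeries give back $(S^3,\xist)$. Hence $K_G(-1)$ is naturally identified with the contact manifold $N$ obtained from $(S^3,\xist)$ by the three contact surgeries $(-1)$ on $G$, $(-1)$ on $B$, and $(+1)$ on $R$. The symmetric argument, interchanging the roles of $G$ and $B$, yields the identification $K_B(-1)\cong N$ as well, and composing these two identifications gives a natural contactomorphism $\phi\colon K_G(-1)\to K_B(-1)$.

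By Lemma~\ref{extend}, the desired Stein equivalence $W_{K_G}\cong W_{K_B}$ follows once we verify that $\phi(\mu_{K_G})$, where $\mu_{K_G}$ is the Legendrian meridian of $K_G$ bounding the Lagrangian cocore of the Weinstein $2$-handle in $W_{K_G}$, bounds a Lagrangian disk in $W_{K_B}$. To produce such a disk, I would follow the strategy used in the proof of Theorem~\ref{extendunknot}: present $W_{K_B}$ via a Weinstein handlebody diagram (a single $2$-handle attached to $B^4$ along $K_B$), introduce an appropriate canceling pair of $(\pm1)$-contact surgeries realizing the natural contactomorphism $\phi$ at the diagrammatic level, and then perform a sequence of contact handle slides (via Lemma~\ref{lem:contactHandleSlide}) to manipulate the diagram until $\phi(\mu_{K_G})$ is exhibited as a maximal-Thurston--Bennequin Legendrian unknot lying in the boundary of the $B^4$-piece of the diagram. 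Such an unknot bounds a standard Lagrangian disk in $B^4\subset W_{K_B}$, furnishing the required disk.

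The main obstacle lies in this last diagrammatic step: explicitly exhibiting $\phi(\mu_{K_G})$ as a standard Legendrian unknot requires carefully using both RGB isotopies $R\cup G\sim\mu_G\cup G$ and $R\cup B\sim\mu_B\cup B$ in $(S^3,\xist)$, in the spirit of the arguments underlying Figures~\ref{fig:extend} and~\ref{fig:examplenmove}. Once the Lagrangian disk is constructed, Lemma~\ref{extend} produces the desired symplectic equivalence $\Phi\colon W_{K_G}\to W_{K_B}$, possibly after a deformation of one of the Stein structures, completing the proof.
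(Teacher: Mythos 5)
Your proposal is correct and follows essentially the same route as the paper: identify $K_G(-1)$ and $K_B(-1)$ with the common surgery $R(+1)\cup G(-1)\cup B(-1)$ and then invoke Lemma~\ref{extend} after checking that the image of the meridian of $K_G$ bounds a Lagrangian disk. The only difference is in how that last check is carried out: where you propose explicit diagrammatic handle slides, the paper simply tracks the meridian of $G$ through both identifications, noting that it becomes the meridian of $K_G$ under one and a maximal Thurston--Bennequin unknot (hence bounding a Lagrangian disk in the $4$-ball) under the other.
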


\begin{proof}
The contact 3-manifold $R(+1)\cup G(-1)\cup B(-1)$ is contactomorphic to both $K_G(-1)$ and $K_B(-1)$, and thus the latter two are contactomorphic. The contactomorphism from $R(+1)\cup G(-1)\cup B(-1)$ to $K_G(-1)$ sends a meridian of $G$ to a meridian of $K_G$, while the contactomorphism from $R(+1)\cup G(-1)\cup B(-1)$ to $K_B(-1)$ sends a meridian of $G$ to a maximal Thurston-Bennequin unknot linking $K_B$ several times.  In consequence, the contactomorphism from $K_G(-1)$ to $K_B(-1)$ sends the meridian of $K_G$ to a Legendrian knot bounding a Lagrangian disk in the Stein trace. Hence, Lemma~\ref{extend} applies and we conclude that the contactomorphism extends to a symplectomorphism of the Stein traces of $K_G$ and $K_B$, up to deformation, as required.
\end{proof}

It is possible to use RGB links to construct pairs of Legendrian knots with arbitrary classical invariants that share the same Stein traces. In the following we present an explicit example that proves the remaining case of Theorem~\ref{thm:main}.(ii).

\begin{figure}[htbp] 
{\small
  \begin{overpic}%[grid,tics=10] 
  {}
    \put(118, 40){\color{red}$(+1)$}
    \put(59, 108){\color{darkgreen}$(-1)$}
    \put(201, 38){\color{blue}$(-1)$}
    \put(245.5, 102){\color{blue}$n$}
  \end{overpic}}
	\caption{An infinite family of Legendrian RGB links. The box labeled $n$ means here $n$ extra full positive twists.}
	\label{fig:RGB}
\end{figure}

\begin{proof}[Proof of Theorem~\ref{thm:main}.(ii) $($The case of $\tb>1)$]
We start with the family of RGB links, indexed by $n\in\N_0$, shown in Figure~\ref{fig:RGB}. By sliding the three green arcs that link $R$ over $B$ and then canceling $R$ and $G$ we are left with the Legendrian knot $K_B$ in $(S^3,\xist)$. And conversely, we can slide $B$ three times over $G$ and then cancel $R$ and $B$ to get the Legendrian knot $K_G$ in $(S^3,\xist)$. This yields an explicit contactomorphism between $K_G(-1)$ and $K_B(-1)$ which by Theorem~\ref{thm:easySteintraces} also extends to an equivalence of the Stein traces. Front projections of $K_G$ and $K_B$ are shown in the top of Figure~\ref{fig:RGB2} from which we compute $\tb(K_G)=\tb(K_B)=2n-6$. Thus, we get any positive even $\tb$. In order to obtain positive odd $\tb$, we stabilize once $B$ in Figure~\ref{fig:RGB} and then proceed analogously.

	\begin{figure}[htbp] 
	{\small
  \begin{overpic}%[grid,tics=10] 
  {}
    \put(13, 157){\color{darkgreen}$(-1)$}
    \put(130, 150){\color{darkgreen}$K_G$}
    \put(360, 145){\color{blue}$K_B$}
    \put(415, 138){\color{blue}$(-1)$}
    \put(45, 25){\color{darkgreen}$-3$}
    \put(140, 60){$-\frac 1n$}
    \put(160, 60){$- \frac 1{2n+4}$}
     \put(335, 15){\color{blue}$-2n-7$}
     \put(385, 50){$-\frac 1n$}
  \end{overpic}}
		\caption{Canceling $R$ with $G$ or $B$ in Figure~\ref{fig:RGB} yields the Legendrian knots $K_G$ and $K_B$ shown in the top row. The bottom row shows smooth surgery descriptions of these knots.}
		\label{fig:RGB2}
	\end{figure}
		
It remains to show that the knots $K_G$ and $K_B$ are not isotopic. In fact, we will argue that they are not even smoothly isotopic. For a particular value of $n$, it is again a simple task by computing some knot invariants. For the whole family, we consider the smooth surgery diagrams of $K_B$ and $K_G$ shown in the bottom row of Figure~\ref{fig:RGB2} and use again the limit formula from~\cite{NeumannZagier85} for the volume together with SnapPy to distinguish all pairs. The computations can be found at~\cite{CasalsEtnyreKegel}.
\end{proof}

\begin{rem}
	Note that by changing the right part of $B$ in Figure~\ref{fig:RGB} it is also straightforward to construct examples of pairs of Legendrian knots with any given values of $(\tb,\rot)$ that share the same Stein trace. We also remark that from the above proof it follows that the underlying smooth knot type of the $K_B$ has any integer slope non-characterizing, a phenomena previously discussed in~\cite{BakerMotegi18}.\hfill$\Box$
\end{rem}

In line with \cite{Tagami1}, and \cite[Theorem 4.8]{Tagami1}, there might be an equivalence between Legendrian dualizable patterns and RGB links, both generalizing contact annulus twist (when the annulus is obtained form the maximal Thurston-Bennequin invariant unknot); we hope to explore this in the future. Finally, we emphasize that using contact Kirby moves might also be useful for the endeavor of building distinct Legendrian knots with contactomorphic Legendrian surgeries. Along the same lines as in Theorem~1.2 of~\cite{ManolescuPiccirillo21}, we get:

\begin{thm}\label{thm:easy0surgery}
Let $L(+1)\cup K_1(-1)\cup K_2(-1)$ in $(S^3,\xist)$ be a $3$-component Legendrian surgery link such that $L(+1)\cup K_i(-1)$ is contactomorphic to $(S^3,\xist)$, $i=1,2$. Consider the Legendrian knot $ K_1'$ in $(S^3,\xist)$ given as the image of $K_1$ under the surgery along $L\cup K_2$, and similarly $K_2'$ in $(S^3,\xist)$. Then, $K_1'$ and $K_2'$ have contactomorphic Legendrian surgeries.
\end{thm}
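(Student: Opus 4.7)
The plan is to prove the theorem by an order-of-surgery argument: the two contact manifolds $K_1'(-1)$ and $K_2'(-1)$ are simply two different presentations of a single contact $3$-manifold $(M,\xi)$, obtained from $(S^3,\xist)$ by performing all three contact surgeries along the Legendrian link $L\cup K_1\cup K_2$. The key input is that contact surgeries on disjoint components of a Legendrian link are local operations whose composition is independent of the order in which they are performed, so $(M,\xi)$ is a well-defined contact $3$-manifold.

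First, I would perform the surgeries on $L\cup K_2$ before the one on $K_1$. By hypothesis, $L(+1)\cup K_2(-1)$ is contactomorphic to $(S^3,\xist)$ via some contactomorphism $\phi_1$. Since $K_1$ is disjoint from $L\cup K_2$, it descends to a Legendrian knot in the surgered manifold, and by definition $\phi_1$ carries it to $K_1'$ in $(S^3,\xist)$. The remaining contact $(-1)$-surgery on $K_1$ is therefore transported by $\phi_1$ to the contact $(-1)$-surgery on $K_1'$ in $(S^3,\xist)$, producing a contactomorphism $K_1'(-1)\cong (M,\xi)$. Symmetrically, reordering so that $L\cup K_1$ is done first gives a contactomorphism $K_2'(-1)\cong (M,\xi)$. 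Composing these yields the claimed contactomorphism $K_1'(-1)\cong K_2'(-1)$.

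The main subtlety, which I would verify carefully, is that the surgery coefficient $-1$ on $K_1$ is preserved under the contactomorphism $\phi_1$, and likewise for $K_2$ and the analogous contactomorphism $\phi_2$. This holds because $\phi_1$ restricts to a contactomorphism from a standard contact tubular neighborhood of $K_1$ inside the complement of $L\cup K_2$ to a standard contact tubular neighborhood of $K_1'$ in $(S^3,\xist)$, and in particular carries the contact framing of $K_1$ to that of $K_1'$. Hence the meaning of contact $(-1)$-surgery is transported unambiguously, and the proof reduces to the order-independence of disjoint contact surgeries. This is the direct contact analogue of the smooth reordering argument of \cite{ManolescuPiccirillo21}.
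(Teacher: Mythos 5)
Your proposal is correct and is essentially the paper's own argument: both identify $K_1'(-1)$ and $K_2'(-1)$ with the single contact manifold $(M,\xi)$ given by the full surgery diagram $L(+1)\cup K_1(-1)\cup K_2(-1)$, using the order-independence of surgeries on disjoint link components. Your added verification that the contactomorphism to $(S^3,\xist)$ transports the contact framing, so that the coefficient $-1$ retains its meaning, is a worthwhile detail the paper leaves implicit.
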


\begin{proof}
Let $(M,\xi)$ be the contact 3-manifold represented by the surgery diagram $L(+1)\cup K_1(-1)\cup K_2(-1)$. By construction $(M,\xi)$ is contactomorphic to the Legendrian surgery along $K_1'$ and to the Legendrian surgery along $K_2'$.
\end{proof}

Note that, a priori, Theorem~\ref{thm:easy0surgery} works in more general situations than Theorem~\ref{thm:easySteintraces}, since $L$ might not be a standard Legendrian unknot. For instance, we can choose the surgery link such that $K_i$ is a push-off of $L$. On the other hand, the Stein traces of Legendrian knots produced via Theorem~\ref{thm:easy0surgery} might not be equivalent, in general, since even the smooth knot traces might not always be diffeomorphic. Here we work out an explicit example and thus prove Theorem~\ref{thm:notExtends}.

\begin{proof}[Proof of Theorem~\ref{thm:notExtends}]
	We consider Figure~\ref{fig:notExtend}. On the left column we see two isotopic copies of the same $3$-component surgery link together with an orange meridian of the green knot. We observe that the green and the blue knots are push-offs of the red knot. On the right column of Figure~\ref{fig:notExtend} we see the image of the green knot $L$ and the image of the blue knot $L'$ under the annulus twist of the two other knots. By construction (or by Theorem~\ref{thm:easy0surgery}) $L$ and $L'$ have contactomorphic Legendrian surgeries where an explicit contactomorphism $\phi$ is given by the handle slides and isotopies in Figure~\ref{fig:notExtend}
	
	Via SnapPy we can compute that both knots, $L$ and $L'$, are hyperbolic with different volumes and thus are smoothly non-isotopic. 
	
	To show that the Stein traces are not equivalent, we first verify that $L(-1)$ and $L'(-1)$ have vanishing hyperbolic symmetry group and thus any smooth diffeomorphism from $L(-1)$ to $L'(-1)$ is isotopic to $\phi$. Thus, it is enough to show that $\phi$ does not extend to a homeomorphism of the knot traces. For that we follow the strategy from Theorem~3.7 of~\cite{ManolescuPiccirillo21}. If $\phi$ extends to a homeomorphism $\Phi\colon W_L\rightarrow W_{L'}$ then $\Phi$ induces a homeomorphism $$DW_L:=W_L\cup -W_L\longrightarrow W_{L'}\cup_{\phi} -W_L=:X.$$
	Since $\tb(L)=1$, we conclude that the double $DW_L$ of $W_L$ has even intersection form and thus $X$ has even intersection form if $\phi$ extends to a homeomorphism of the Stein traces. 
	
	To conclude the theorem we will show that $X$ has odd intersection form. For that we will describe a Kirby diagram of $X$ from which it is straightforward to compute the intersection form. We recall that doubling the knot trace of a knot $L$ corresponds to adding a $0$-framed meridian $\mu_L$ of $L$ to the Kirby diagram~\cite{GompfStipsicz99}. Thus we get a Kirby diagram of $X$ by attaching a $2$-handle along $\phi(\mu_L)$ to the Stein trace of $L'$. In Figure~\ref{fig:notExtend} we have depicted a meridian $\mu_L$ of $L$ and its image $\phi(\mu_L)$ in orange. Since the topological $0$-framing of $\mu_L$ corresponds to a contact $(+1)$-framing, we also need to add a $2$-handle with contact framing $(+1)$ to $\phi(\mu_L)$ in the bottom right of Figure~\ref{fig:notExtend} to get a Kirby diagram of $X$.
	
	But then the intersection form of $X$ can be represented in the basis given by the $2$-handles by the odd matrix
	\begin{align*}
	\begin{pmatrix}
	\tb\big(\phi(\mu_L)\big)+1 & \lk\big(L',\phi(\mu_L)\big) \\
	\lk\big(L',\phi(\mu_L)\big) & \tb(L')-1 
	\end{pmatrix}=\begin{pmatrix}
	1 & 1 \\
	1 & 0 
	\end{pmatrix}.
	\end{align*}
\end{proof}

		\begin{figure}[htbp] 
	{\small
  \begin{overpic}[scale=1.44,,tics=10] 
  {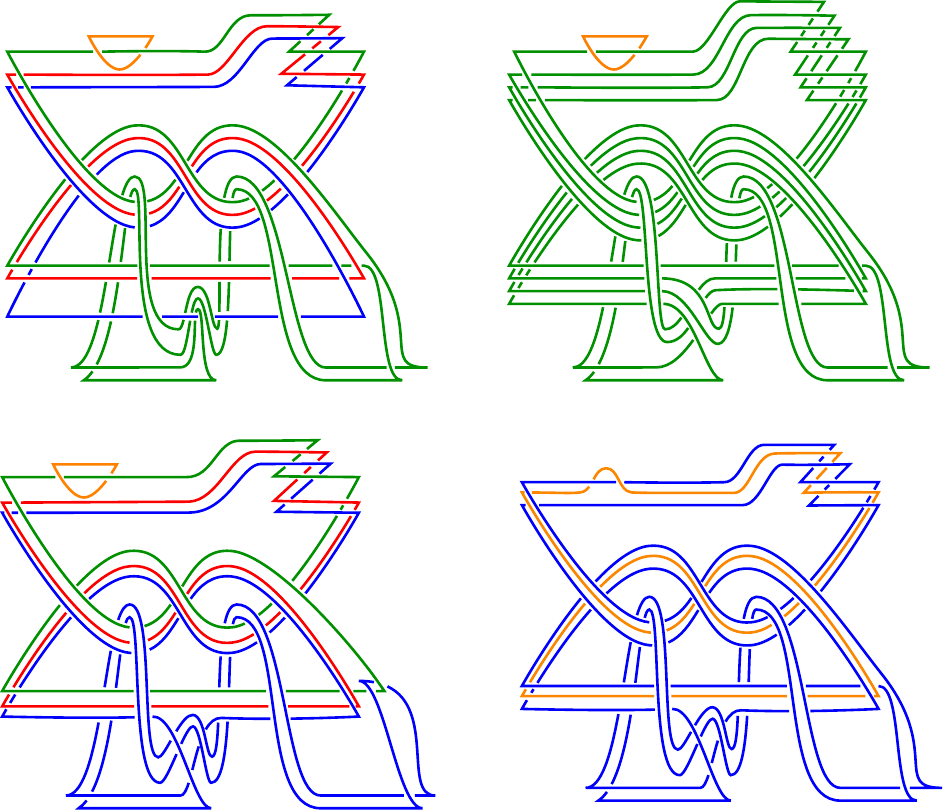}
    \put(155, 304){\color{red}$(+1)$}
    \put(-2, 270){\color{blue}$(-1)$}
    \put(153, 243){\color{darkgreen}$(-1)$}
    \put(358, 270){\color{darkgreen}$(-1)$}
    \put(154, 124){\color{red}$(+1)$}
    \put(-2, 88){\color{blue}$(-1)$}
    \put(145, 80){\color{darkgreen}$(-1)$}
    \put(355, 92){\color{blue}$(-1)$}
    \put(71, 165){\begin{rotate}{-90}$\cong$\end{rotate}}
  \end{overpic}}
	\caption{The blue and green Legendrian knots on the right are non-isotopic Legendrian knots with contactomorphic Legendrian surgeries, but non-equivalent Stein traces.}
	\label{fig:notExtend}
\end{figure}

This concludes our discussion on distinct Legendrian links in the standard contact $3$-sphere with equivalent Stein traces, i.e. cases in which the $(-1)$-slope is not characterizing, and not even Stein characterizing. The manuscript now proceeds with results on characterizing slopes.

%%%%%%%%%%%%%%%%%%%%%%%%%%%%%%%%%%%%%%%%%%%%%%%%%%%%%%%%%%%%%%%%%%%%%%%%%%%%%%%%

\section{Characterizing slopes}\label{sec:char} In this section we present results on characterizing slopes, in particular proving Theorems~\ref{Steinchar}, \ref{minus1char}, \ref{1characterize}, \ref{uknotchar} and \ref{thm:reciprokal}. In Subsection \ref{ssec:characterizing1}, we first consider characterizing $(\pm1)$-slopes and in the following Subsection \ref{ssec:characterizing_general} we consider general characterizing slopes.

\subsection{Contact $(\pm 1)$ characterizing slopes}\label{ssec:characterizing1} Let us prove Theorem~\ref{Steinchar} that says any Legendrian realization of the unknot, right or left handled trefoil, or figure eight knot is characterized by its Stein trace.

\begin{proof}[Proof of Theorem~\ref{Steinchar}]
	Let $L$ in $(S^3,\xi_{st})$ be a Legendrian realization of one of the knots listed in the statement of the theorem, with Thurston--Bennequin invariant $\tb(L)=t$, and $L'$ in $(S^3,\xi_{st})$ be another Legendrian knot such that its Stein traces $W_L$ and $W_{L'}$ are equivalent. Then the contact boundary $\partial W_L$ is diffeomorphic to $\partial W_{L'}$. Since the homology of $\partial W_L$ is isomorphic to $\Z_{|t-1|}$, $t'=\tb(L')$ must be $t$ or $2-t$. Since $W_L$ and $W_{L'}$ are orientation preserving diffeomorphic, their intersection forms coincide, and thus $t'$ must be equal to $t$. Now, the smooth knot type of $L$ is determined by smooth surgeries on it~\cite{KronheimerMrowkaOzsvathSzabo07, OzsvathSzabo19}, and thus $L'$ must be smoothly isotopic to $L$ in $S^3$. In addition, the Chern classes of the Stein traces $W_L$ and $W_{L'}$ are given by the rotation numbers of $L$ and $L'$, respectively, under the identification of the second cohomology of the Stein traces with the integers. Hence, since they are the same, we must have that $\rot(L)=\rot(L')$. Finally, it suffices to note that the knot types under consideration are Legendrian simple~\cite{EliashbergFraser98, EtnyreHonda01b}, and thus $L$ and $L'$ are Legendrian isotopic in $(S^3,\xi_{st})$. 
\end{proof}

Let us now proceed with Theorem \ref{minus1char} by showing that some of the Legendrian knots are contact $(-1)$-surgery characterized.

\begin{proof}[Proof of Theorem~\ref{minus1char}] For proving (1), let us consider a Legendrian realization $L$ in $(S^3,\xi_{st})$ of the unknot with $\tb(L)=t$, and $L'$ in $(S^3,\xi_{st})$ another Legendrian knot such that $L'(-1)$ is contactomorphic to $L(-1)$. As the homology of these manifolds is $\Z_{|t-1|}$, we must have that $t'=\tb(L')=t$ or $t'=2-t$.
	
	If $t'=t$, then we know that $L'$ is smoothly an unknot~\cite{KronheimerMrowkaOzsvathSzabo07}, and since, by Theorem~\ref{computInv}, the Euler classes of $L(-1)$ and $L'(-1)$ are determined by the rotation number $r$ of $L$ and $r'$ of $L'$, respectively, we must have that $r\equiv r'  \mod (1-t)$. From the classification of Legendrian unknots~\cite{EliashbergFraser98}, it follows that $0\leq r,r'\leq-1-t < 1-t$ and thus $r=r'$, implying that $L'$ is Legendrian isotopic to $L$. 
	
	Now consider the case $t'=2-t$, then we notice that $L(-1)$ is the lens space $L(1-t,1)$. The article \cite{McDuff90} has classified their Stein fillings, and the only simply connected Stein fillings are disk bundles over $S^2$ with negative Euler class. In particular, the intersection form of any such filling is negative definite. However, the Stein trace $W_{L'}$ of $L'$ is a symplectic filling of $L'(-1)$ with intersection pairing positive definite. Hence, if $L(-1)$ were contactomorphic to $L'(-1)$ we would have a symplectic filling of $L(1-t,1)$ that is not allowed by~\cite{McDuff90}. Thus $(-1)$ is a contact characterizing slope for any Legendrian unknot. This concludes Theorem~\ref{minus1char}.(1). 
	
	Let us proceed with Theorem~\ref{minus1char}.(3),
	the argument for the left-handed trefoil and the figure eight knot is identical. As above, $L'$ in $(S^3,\xi_{st})$ is a Legendrian knot such that $L'(-1)$ is contactomorphic to $L(-1)$, and we can assume that $t'=2-t$, since in the other case we already know that $L'$ is Legendrian isotopic to $L$, by a similar argument as above. (Here we use the classification of the Legendrian figure eight knots and trefoils from~\cite{EtnyreHonda01b}.)
	
	We proceed by computing the the $d_3$-invariant of the contact surgeries $L(-1)$ and $L'(-1)$, using Theorem~\ref{computInv}. Since the first homology of both $L(-1)$ and $L'(-1)$ is $\Z_{|t-1|}$ (and $t$ is negative since $L$ is a figure eight knot or a left-handed trefoil) we know that the Euler classes are torsion and thus the $d_3$-invariants are well-defined. Then it is readily seen that
	\begin{align*}
	d_3\big(L(-1)\big)&= \frac{-r^2+1-t}{4(1-t)},\\
	d_3\big(L'(-1)\big)&= \frac{{r'}^2-5+5t}{4(1-t)}.
	\end{align*}

	Hence, if $L(-1)$ and $L'(-1)$ are contactomorphic, we must have that $${r'}^2=-r^2+6(1-t),$$ where the possibilities for $r$ are in $\{-t+1,-t+3, \ldots, t-1\}$. For example, when $t=-1$, we must have $r=0$ and so ${r'}^2=12$: then $r'$ is not an integer and thus there is no such $L'$ with $t'=3$, and $L$ is contact $(-1)$-surgery characterized. Similarly, one verifies that for all $t\geq -10$, there is no integer solution for $r'$ except when $t=-5$ and $r=0$. (In this case $r'=6$ and we cannot rule out the possibility that such an $L'$ exists.) This concludes Theorem~\ref{minus1char}.(3b). 
	
	Let us focus on Theorem~\ref{minus1char}.(3a), i.e.\ showing that if $L$ has rotation number $r=0$, then it is $(-1)$ characterized. This condition would force ${r'}^2=6(1-t)$, and this only has integral solution when $t=1-6k^2$, for $k\in\N$, in which case $r'=6k$. In this case we can use Theorem~\ref{computInv} to deduce that the Euler class of $L(-1)$ is $0$ and the Euler class of $L'(-1)$ is $6k$. But in $H_1(K(-1))=\Z_{6k^2}$, we have $6k\not=0$ and thus $L(-1)$ cannot be contactomorphic to $L'(-1)$. 
	
	For Theorem~\ref{minus1char}.(3c) we want to show that if $r\geq \sqrt{6(1-t)}$, then $L$ is contact $(-1)$-surgery characterized. Indeed, in this case ${r'}^2\leq 0$ and thus we conclude that $r'=0$, but we can rule this case out using the Euler class, as we just did above. 
	
	Finally, for Theorem~\ref{minus1char}.(2), where $L$ is a Legendrian realization of the right-handed trefoil; suppose $L'$ is another Legendrian knot with $L'(-1)$ contactomorphic to $L(-1)$. First we address the case with $t=1$, in which the only possibility for $t'$ is $1$. Thus, since $L$ is smoothly determined by any surgery, $L'$ must be smoothly isotopic to $L$. In addition, since the Euler classes of $L(-1)$ and $L'(-1)$, which are determined by the rotation numbers of $L$ and $L'$, are the same, we must have the $r'=r=0$. Since the right-handed trefoil is Legendrian simple, we conclude that $L$ and $L'$ are Legendrian isotopic in this case. The cases when $t=0$ or $-1$ follows by a similar argument as above in Part~(3). 
\end{proof}

\begin{rem}
	Note that for a Legendrian figure eight knot $L$ with $\tb(L)=-11$ and $\rot(L)=6$, any Legendrian knot $L'$ with $\tb(L')=13$ and $\rot(L')=6$ will satisfy $d_3(L(-1))=d_3(L'(-1))$ and $L(-1)$ and $L'(-1)$ will also have the same Euler class. In this case, we still believe that $L$ is contact $(-1)$-surgery determined, but the above argument is not sufficient to establish this. One runs into a similar complication when $L$ is the right-handed trefoil with $\tb=-2$ and $\rot=3$.\hfill$\Box$
\end{rem}

Let us continue with the proof of Theorem~\ref{1characterize} that says a Legendrian unknot, right or left-handed trefoil, or figure eight knot is $(+1)$ characterized if its rotation number is zero.

\begin{proof}[Proof of Theorem~\ref{1characterize}]
	For $L$ the standard Legendrian unknot, with $t=-1$, any another Legendrian knot $L'$ for which $L'(+1)$ is contactomorphic to $L(+1)$, must satisfy $t'=-1$ as well (which we see by comparing the homologies). Thus we conclude that $L'$ is isotopic to $L$, as done in the previous proofs above. 
	
	Any other Legendrian realization $L$ of an unknot, a figure eight knot or a left-handed trefoil has $t<-1$. Again let $L'$ be another Legendrian knot such that $L'(+1)$ is contactomorphic to $L(+1)$. Then the arguments in the previous two proofs concludes that $t'$ must be $t$ or $-t-2$. As above, if $t'=t$, then we must have that $L'$ is isotopic to $L$. It remains to consider the case $t'=t-2$, and once again we compute the $d_3$-invariants which are well defined since the Euler classes are torsion. By applying Theorem~\ref{computInv} we obtain
	\begin{align*}
	d_3\big(L(-1)\big)&= -\frac{-r^2-5-5t}{4(1+t)},\\
	d_3\big(L'(-1)\big)&= -\frac{{r'}^2+1+t}{4(1+t)}.
	\end{align*}
	Thus ${r'}^2=r^2-6(t+1)$ and in particular for $r=0$ we deduce that $r'=0$ as above.
	
	Finally, let $L$ be a Legendrian realization of the right-handed trefoil with $r=0$ and let $L'$ be another Legendrian knot such that $L'(+1)$ is contactomorphic to $L(+1)$. By comparing the orders of the first homologies we observe again that $t'$ is either $t$ or $-t-2$. If $t'=t$ we conclude again that $L'$ is smoothly isotopic to $L$. Since $r=0$ the Euler class is $0$ and the $d_3$-invariants are defined from which we conclude that $r'=r=0$.
	
	Similarly, we can compare in the case $t'=-t-2$ the $d_3$-invariants to deduce that $r'=r=0$. (Here we need to distinguish the cases $t\leq-2$, $t=-1$ and $0\leq t$ since the signatures of the linking matrix will differ.) 
\end{proof}

\subsection{General characterizing slopes}\label{ssec:characterizing_general}
Let us discuss general rational slopes. First, we note that the notion for characterizing contact slopes presented in the introduction might appear at a first glance unnatural. The following example explains why this is necessary.

\begin{ex}
	Let $L$ in $(S^3,\xist)$ be the Legendrian unknot with Thurston--Bennequin invariant $t=-3$ and rotation number $r=2$, and $L'$ be the Legendrian unknot with Thurston--Bennequin invariant $t'=-3$ and rotation number $r'=0$. We consider the set of contact manifolds obtained from $L$ and $L'$ by contact $(-2)$-surgery. The Euler classes of the resulting contact structures compute to be
	\begin{align*}
	e\big(K_1(-2)\big)&=\{3,1\} \text{ and}\\
	e\big(K_2(-2)\big)&=\{1,-1\}.
	\end{align*}
	It follows that $L(-2)$ is not contactomorphic to $L(-2)$. On the other hand, the contact structures in $L(-2)$ and $L'(-2)$ with Euler class equal to $1$ are contactomorphic.\hfill$\Box$
\end{ex}

In this context, finding examples of non-characterizing slopes for these more general coefficients is (naturally) easier than for contact $(\pm1)$-surgeries. Here is a simple example:

\begin{ex}
	Let $L$ be a Legendrian realization of the torus knot $T_{5,4}$ with $t=7$ and $r=0$ and let $L'$ be a Legendrian realization of the torus knot $T_{11,2}$, with the same classical invariants. Then $L(14)$ is contactomorphic to $L'(14)$. Indeed, these surgeries topologically correspond to $21$-surgery along $T_{5,4}$ and $T_{11,2}$, yielding diffeomorphic lens spaces by~\cite{Moser71}. The knots $L$ and $L'$ are both stabilized twice with different signs~\cite{EtnyreHonda01b}, and therefore all contact manifolds obtained by positive surgery along them are overtwisted~\cite{EtnyreKegelOnaranPre}. Their homotopical invariants depend only on the classical invariants of the surgery knots and are therefore equal. The classification of overtwisted contact structures~\cite{Eliashberg89} concludes that the contact manifolds are contactomorphic.\hfill$\Box$
\end{ex}

Let us show Theorem~\ref{uknotchar}, proving that many slopes of Legendrian unknots are characterizing. 

\begin{proof}[Proof of Theorem~\ref{uknotchar}]
	We start with the non-integral slopes in Part (i). Let $L$ be a Legendrian unknot with $\tb(L)=t$ and suppose there is some non-integral, rational number $p/q$ (with $q>0$) and a Legendrian knot $L'$ such that $L(p/q)$ is contactomorphic to $L'(p/q)$. As a smooth manifold, $L(p/q)$ is obtained from $S^3$ by $t+p/q$ surgery on the unknot and thus its homology is $\Z_{|p+qt|}$, and it follows that $\tb(L')=t'=t$ or $t'=-t-2\frac pq$. In the former case, we can conclude that $L'$ is Legendrian isotopic to $L$ as in the proof of Theorem~\ref{minus1char}, so it remains to consider the latter case. 
	
	In this case, we first notice that, for $t'$ to be an integer, we must have that $q=2$ and then $t'=-t-p$. (Or $q=1$, but then $r$ is an integer and we are assuming that is not the case.) Now, since $L(p/q)$ is a lens space and has cyclic fundamental group, the Cyclic Surgery Theorem~\cite{CullerGordonLueckeShalen87} implies that $L'$ is either a Legendrian torus knot or the smooth surgery coefficient is an integer. Since $-t-p+\frac p2=\frac{-2t-p}{2}$ is not an integer, $L$ must be a torus knot. By~\cite{Moser71}, the only surgery slopes on an $(a,b)$--torus knot $T$ that yield a lens space are $ab\pm \frac 1m$. However, for positive torus knots, the maximal Thurston--Bennequin invariant of these torus knots is $ab-a-b$, and thus the contact surgery coefficient is larger than $a+b-1$. The smallest value $a+b-1$ can take for a non-trivial positive torus knot is $4$, and thus any non-integer contact surgery with coefficient less than $4$ on a Legendrian positive torus knot cannot produce a lens space. For negative torus knots, the smooth surgery coefficients yielding lens spaces are negative and strictly less that $-4$ for non-trivial torus knots, and thus are not of the form $\frac{-2t-p}{2}$ for $p/2<4$. This concludes the case were the slope $p/q$ is a non-integral, rational number.
	
	Let us proceed with the case of an integral contact surgery slope $n\leq 1$. If $n\leq -1$, then Theorem~\ref{minus1char} shows that $n$ is contact characterizing. Technically, the theorem shows that $(-1)$ is contact characterizing for any unknot. That said, contact $n$-surgery, for $n\leq-1$, on a Legendrian $L$ is simply contact $(-1)$-surgery on $L$ after an $(|n|-1)$-fold stabilization, so the result follows. In the case of $n=1$, the result is contained in Theorem~\ref{1characterize}. This finishes the proof of Theorem~\ref{uknotchar}.(i).
	
	Next, we discuss Theorem~\ref{uknotchar}.(ii). Let $L$ in $(S^3,\xi_{st})$ be a Legendrian realization of the unknot with Thurston--Benne\-quin invariant $t$. We need to show that $-t$ is a characterizing contact slope for $L$.
	
	First, we observe that $L(-t)$ is topologically the $0$-surgery along the unknot and therefore yields $S^1\times S^2$. Let $L'$ be another Legendrian knot with Thurston--Bennequin invariant $t'$ such that $L'(-t)$ is contactomorphic to $L(-t)$. Property $R$, proven in~\cite{Gabai87}, states that $S^1\times S^2$ has a unique topological surgery diagram along a single knot: the $0$-surgery along the unknot. Therefore, $L'$ has to be a Legendrian realization of the unknot with Thurston--Bennequin invariant also equal to $t$.
	
	Second, let us show that the rotation numbers $r$ of $L$ and $r'$ of $L'$ also agree. For that, we use Theorem~\ref{computInv} to compute the Euler-classes of the contact structures in $L(-t)$ and in $L'(-t)$, identified with elements in $H^2(S^1\times S^2)=\Z$ generated by the Poincar\'e dual of the meridian. We obtain:
	\begin{align*}
	e\big(L(-t)\big)&=t\pm r+1,\quad e\big(L'(-t)\big)=t\pm r'+1,
	\end{align*}
	from which we conclude that $r=r'$. Now, Legendrian unknots are classified by their classical invariants~\cite{EliashbergFraser98} and thus $L'$ is Legendrian isotopic to $L$. 
	
	For the contact slopes of the form $\frac{1-qt}{q}$, with $q\in\Z\setminus\{0\}$, we argue in a similar manner. Indeed, topologically, these slopes correspond to the $(1/q)$-surgery along the unknot yielding $S^3$. By~\cite{GordonLuecke89}, these are the only surgery diagrams along single knots yielding again $S^3$. Thus, any other Legendrian knot $L'$ with $L'(\frac{1-qt}{q})$ contactomorphic to $L(\frac{1-qt}{q})$ has to be a Legendrian unknot with Thurston--Bennequin invariant $t$. In~\cite{EtnyreKegelOnaranPre}, the $d_3$-invariants of the resulting contact structures were computed, from which it follows that $t=t'$ and $r=r'$. Thus, once again, $L'$ is Legendrian isotopic to $L$. This concludes Theorem~\ref{uknotchar}.$(ii)$.
\end{proof}

Let us now continue with the proof of Theorem~\ref{thm:reciprokal}. For that, we first prove a simple lemma, and recall that we only consider the rotation number up to sign.

\begin{lem}\label{lem:first}
	Let $L$ in $(S^3,\xist)$ be a Legendrian realization of a smooth knot $K$, with Thurston--Bennequin invariant $t=\tb(L)$ and rotation number $r=\rot(L)$. Suppose that $\frac{\pm1+nt}{n}$ is a characterizing slope of $K$ for $n\geq3$. Then, any Legendrian knot $L'$ in $(S^3,\xist)$ such that $L'(\pm1/n)$ is contactomorphic to $L(\pm1/n)$ must be Legendrian realization of $K$ with the same classical invariants as $L$.
\end{lem}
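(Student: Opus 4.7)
The plan is to extract the three pieces of data about $L'$---its Thurston--Bennequin invariant $t'=\tb(L')$, its smooth knot type, and the absolute value of its rotation number $r'=\rot(L')$---from the contactomorphism class of $L(\pm1/n)$, proceeding in three steps. First, I would match $\tb$ via first homology: the smooth 3-manifold underlying $L(\pm1/n)$ is the $\frac{\pm1+nt}{n}$-surgery on $K$, whose first homology is $\Z/|nt\pm1|\Z$, and the analogous expression for $L'$ forces $|nt'\pm1|=|nt\pm1|$, giving either $t'=t$ or $t'=-t\mp\tfrac{2}{n}$. The second option fails to be an integer for $n\geq 3$, so $\tb(L')=t$.

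Having matched the Thurston--Bennequin invariants, the smooth surgeries on the underlying knot types of $L$ and $L'$ share the same smooth surgery coefficient $\frac{\pm1+nt}{n}$ and produce diffeomorphic 3-manifolds. By hypothesis, this slope is characterizing for $K$, so $L'$ must be smoothly isotopic to $K$, which handles the second piece of data.

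For the rotation number, I would compare the $d_3$-invariants of $L(\pm1/n)$ and $L'(\pm1/n)$ via Theorem~\ref{computInv}. For a single-component contact $(\pm1/n)$-surgery, the generalized linking matrix is the $1\times 1$ matrix $(nt\pm1)$, which is nonzero for $n\geq 3$; hence the Euler class is torsion and the $d_3$-invariant is well-defined. Unwinding the formula (with the unique rational solution $b = r/(nt\pm 1)$) gives
\[
d_3\bigl(L(\pm 1/n)\bigr)\;=\;\frac{n r^{2}}{4(nt\pm 1)}+C(n,t),
\]
where $C(n,t)$ collects the terms depending only on $n$ and $t$ (the $(3-n)(\pm 1)$ correction and the signature contribution). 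Since $t'=t$, the corresponding expression for $L'$ has the same $C(n,t)$, so equality of the $d_3$-invariants reduces to $r^{2}={r'}^{2}$, i.e.\ $|\rot(L')|=|\rot(L)|$. The main potential obstacle---that rotation numbers might not be recoverable from a purely homotopical invariant---is resolved precisely by this separation: once $\tb(L')=t$ has been matched in the first step, every term in the $d_3$-formula other than the one proportional to $r^{2}$ is forced to coincide for $L$ and $L'$, cleanly isolating $|\rot(L')|$ and completing the argument.
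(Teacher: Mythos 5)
Your proposal is correct and follows essentially the same route as the paper's proof: match $\tb$ by comparing the orders of first homology (using $n\geq 3$ to rule out the non-integral solution $t'=-t\mp\tfrac{2}{n}$), invoke the characterizing-slope hypothesis to pin down the smooth knot type, and then compare the $d_3$-invariants via Theorem~\ref{computInv} to isolate the term $\tfrac{nr^2}{4(nt\pm1)}$ and conclude $|\rot(L')|=|\rot(L)|$. Your explicit remarks on why the second homology solution fails and why the $d_3$-invariant is well-defined only make explicit what the paper leaves implicit.
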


\begin{proof}
	Since $L(\pm1/n)$ is contactomorphic to $L'(\pm1/n)$, it follows that $K(\frac{\pm1+nt}{n})$ is diffeomorphic to $K'(\frac{\pm1+nt'}{n})$, where $K'$ denotes the underlying smooth knot type of $L'$ and $t'$ is the Thurston--Bennequin invariant of $L'$. By comparing the ranks of the first homology groups we conclude that
	\begin{equation*}
	|\pm1+nt|=|\pm1+nt'|
	\end{equation*}
	and, for $n\geq3$, it follows that $t'=t$. Since $\frac{\pm1+nt}{n}$ is a characterizing slope for $K$, we conclude that $K'$ is isotopic to $K$. It remains to show that the rotation number $r'$ of $L'$ is equal to $r$.
	
	For that, we compute the $d_3$-invariants of the contact structure $\xi$ on $L(\pm1/n)$ and the contact structure $\xi'$ on $L'(\pm1/n)$. It follows from $n\geq3$ that the first homology group of the underlying topological manifold is torsion and thus their $d_3$-invariants are well-defined. Via Theorem~\ref{computInv} we get
	\begin{align*}
	d_3(\xi)=&\frac{1}{4}\left(\frac{nr^2}{\pm1+nt}\pm(3-n)-3\operatorname{sign}(\pm1+nt)\right),\\
	d_3(\xi')=&\frac{1}{4}\left(\frac{nr'^2}{\pm1+nt}\pm(3-n)-3\operatorname{sign}(\pm1+nt)\right).
	\end{align*}
	This implies that $r'=r$. 
\end{proof}

Theorem~\ref{thm:reciprokal}.(iii), showing that the $(\pm 1/n)$-slope is contact surgery characterizing for the right and left-handed trefoil, and figure eight knot, when $n\geq 3$, follows from Lemma~\ref{lem:first}, as follows.

\begin{proof}[Proof of Theorem~\ref{thm:reciprokal}.(iii)]
	Any topological slope of the figure eight and the left and right-handed trefoil is characterizing by~\cite{OzsvathSzabo19}. Since those knots are all Legendrian simple~\cite{EtnyreHonda01b}, the statement follows directly from Lemma~\ref{lem:first}. 
\end{proof}

Lemma \ref{lem:first} can also be readily combined with known results on the existence on topological characterizing slopes to deduce Theorem~\ref{thm:reciprokal}.(i) and Theorem~\ref{thm:reciprokal}.(ii).

\begin{proof} [Proof of Theorem~\ref{thm:reciprokal}.(i) and (ii)]
	Let $K$ be a hyperbolic knot. By \cite{Lackenby19}, any slope $p/q$ is characterizing for $|q|$ sufficiently large, and thus Theorem~\ref{thm:reciprokal}.(i) follows from Lemma~\ref{lem:first}.
	
	For a (non-hyperbolic) topological knot, it follows from \cite{Lackenby19} that $p/q$ is characterizing if $|p|\leq|q|$ and $|q|$ is sufficiently large. The conditions from Theorem~\ref{thm:reciprokal}.(ii) on $\tb(L)$ ensure that the corresponding topological surgery coefficients are characterizing, and thus the statement follows again from Lemma~\ref{lem:first}.
\end{proof}

The proof of Theorem~\ref{thm:reciprokal} is now concluded once we establish Theorem~\ref{thm:reciprokal}.(iv), as follows:

\begin{proof}[Proof of Theorem~\ref{thm:reciprokal}.(iv)]
	The results about the characterizing slopes of the Legendrian trefoils and figure eight knots, stated in Theorem~\ref{thm:reciprokal}.$(iv.1)$ and $(iv.2)$ readily follow along the same lines as the proof of Theorem~\ref{uknotchar}.$(ii)$; we omit the details and only discuss the main points. First, we use that these knots are also classified by their classical invariants~\cite{EtnyreHonda01b} and that a contact $(-1-t)$-surgery along a left-handed Legendrian trefoil with Thurston--Bennequin invariant $t$ corresponds to the topological $(-1)$-surgery along the left-handed trefoil. Therefore, it yields the Poincar\'e homology sphere. By \cite{Ghiggini08b}, this is actually the unique topological surgery description of the Poincar\'e homology sphere. Then a computation of the homotopical invariants of the resulting contact structures finishes the proof as above. 
	
	For the results for the Legendrian realizations of right-handed trefoils and figure eight knots, we use again that these knots are classified by their classical invariants in~\cite{EtnyreHonda01b} and that the Brieskorn homology sphere $\Sigma(2,3,7)$ has exactly two topological surgery diagrams along a single knot: the $(+1)$-surgery along the figure eight and the $(-1)$-surgery along the right-handed trefoil~\cite{OzsvathSzabo19}. 
\end{proof}

\subsection{Legendrian knots with infinitely many surgeries the same} The results of this manuscript have discussed in detail similarities and differences between the smooth framework and contact topological statements. The last result that we present emphasizes a difference, as follows.

In smooth low-dimensional topology, a folk theorem states that if two knots $K$ and $K'$ in $S^3$ admit infinitely many slopes $r$ such that $K(r)$ is diffeomorphic to $K'(r)$, then $K$ and $K'$ are isotopic. For instance, the generic case, where $K$ and $K'$ are hyperbolic, follows directly from Thurston's hyperbolic Dehn filling theorem~\cite{Thurston80}. Indeed, for a sufficiently large slope, the core of the newly glued-in solid torus represents the shortest geodesic and thus any isometry of the surgered manifold has to preserve that geodesic and hence restricts to an isometry of the knot complements. That said, the analogous result in contact topology, for Legendrian knots, does not hold.

\begin{thm}
	Let $n\in\N$ be any positive integer. Then there exist $n$ pairwise non-isotopic Legendrian knots $L_1,\ldots, L_n$ in $(S^3,\xist)$, all smoothly isotopic, such that the contactomorphism type of $L_i(r)$ is independent of $i$, $1\leq i\leq n$, for every contact surgery coefficient $r\not\in [-1,0]$.
\end{thm}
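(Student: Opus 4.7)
My plan is to build the $L_i$ as uniform stabilizations of Legendrian representatives of a smooth knot type that is not Legendrian simple but has enough transverse symmetry, so that every contact surgery outside $[-1,0]$ becomes insensitive to the distinguishing Legendrian-isotopy data. Writing $S_+, S_-$ for the positive and negative Legendrian stabilization operators, I would first fix a smooth knot type $K$ admitting at least $n$ pairwise Legendrian non-isotopic representatives $L_1^0, \ldots, L_n^0$ with common classical invariants $(\tb, \rot)$ and with common positive and negative transverse push-offs. For $n=2$, the classical Chekanov pair in $m(5_2)$ suffices, since $m(5_2)$ is not Legendrian simple but is transversely simple. For larger $n$, I would take an $(n-1)$-fold connect sum of $m(5_2)$ with itself and choose the $n$ distinct Legendrian connect-sum combinations of the Chekanov pair: these remain pairwise Legendrian non-isotopic because Chekanov-Eliashberg contact homology is well-behaved under connect sum, and they share transverse push-offs because transverse simplicity is inherited at the level of the classical self-linking invariant. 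Setting $L_i := S_+ S_- L_i^0$, each $L_i$ then admits both positive and negative destabilizations, the $L_i$ remain pairwise Legendrian non-isotopic (the distinguishing contact-homological invariants survive one pair of stabilizations), and by construction $S_+ L_i \cong S_+ L_j$ and $S_- L_i \cong S_- L_j$ as Legendrian knots for all $i,j$.

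\emph{Case $r > 0$.} I would decompose contact $(r)$-surgery on $L_i$ via Ding-Geiges-Stipsicz into an initial contact $(+1)$-surgery on $L_i$ together with contact $(\pm 1)$-surgeries on push-offs, stabilized according to the continued fraction of $r$. Since $L_i$ is stabilized, the first step produces an overtwisted contact manifold: the zig-zag of the stabilization bounds a disk whose Legendrian boundary becomes an overtwisted disk after the $(+1)$-handle attachment, cf.~\cite{EtnyreKegelOnaranPre}. The subsequent surgeries are supported away from this zig-zag, so the overtwisted disk persists. Every element of $L_i(r)$ is therefore overtwisted, and by Eliashberg's classification is determined by the homotopy class of the underlying plane field, i.e.\ by its Euler class and $d_3$-invariant. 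By Theorem~\ref{computInv}, these invariants depend only on $(\tb, \rot, K, r)$ and the chosen stabilization signs, all of which are common across $i$, yielding $L_i(r) \cong L_j(r)$ as collections.

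\emph{Case $r < -1$.} Here typical elements of $L_i(r)$ are tight and the overtwisted argument fails. Instead, each element of $L_i(r)$ can be written as a Legendrian surgery on a stabilized push-off $S_+^a S_-^b L_i$ with $a + b \geq 1$, with the tight fillings of the surgery solid torus parametrized by the sign patterns; for integer $r = -n$ with $n \geq 2$ this gives exactly the $n$ Legendrian surgeries on $S_+^a S_-^{n-1-a} L_i$ for $a = 0, \ldots, n-1$, and rational $r < -1$ is handled analogously via the Ding-Geiges continued fraction. Since $S_+ L_i \cong S_+ L_j$ and $S_- L_i \cong S_- L_j$ by construction, any further stabilization preserves this isotopy, so the Legendrian surgeries on $S_+^a S_-^b L_i$ and $S_+^a S_-^b L_j$ agree component-by-component, giving $L_i(r) \cong L_j(r)$.

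The hardest part is the construction in the first paragraph: simultaneously achieving pairwise Legendrian non-isotopy and identical single-sign stabilizations across all the $L_i$, for arbitrarily large $n$. This balance is exactly what non-Legendrian-simple yet transversely simple knot types provide, and the quantitative extraction of arbitrarily many such Legendrians reduces to controlling how Chekanov-Eliashberg contact homology behaves under iterated connect sums of $m(5_2)$. A secondary subtlety is confirming, via the Honda-Giroux classification of tight contact structures on solid tori, that the stabilization-pattern parametrization I use in the $r < -1$ case exhausts every tight filling of the surgery solid torus and matches the Ding-Geiges decomposition on the nose.
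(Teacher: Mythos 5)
Your overall architecture matches the paper's: handle $r>0$ by forcing all surgeries to be overtwisted (stabilized knots plus Eliashberg's classification) and handle $r<-1$ by writing every tight extension as Legendrian surgery on stabilized push-offs, so that it suffices to have $S_+L_i\cong S_+L_j$ and $S_-L_i\cong S_-L_j$. The gap is in the construction of the family itself, and it is fatal as written. You need the knots $L_i=S_+S_-L_i^0$ to be pairwise non-isotopic \emph{and} to satisfy $S_\pm L_i\cong S_\pm L_j$. But if $S_+L_i^0\cong S_+L_j^0$ (which is what happens for the Chekanov pair in $m(5_2)$ --- a single stabilization of either sign already identifies the two representatives), then $L_i=S_-S_+L_i^0\cong S_-S_+L_j^0=L_j$, so your $L_i$ are all Legendrian isotopic and the theorem is not proved. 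Conversely, if the $L_i^0$ are chosen so that $S_+S_-L_i^0$ stay distinct, you have given no reason why one \emph{further} stabilization suddenly identifies them: transverse simplicity only guarantees isotopy after \emph{sufficiently many} stabilizations of one sign (Epstein--Fuchs / Fuchs--Tabachnikov), not after exactly one more. Your stated justification for non-isotopy --- that ``the distinguishing contact-homological invariants survive one pair of stabilizations'' --- is false: the Chekanov--Eliashberg DGA of any stabilized Legendrian is trivial, so contact homology cannot distinguish the $L_i$ at all.

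What the argument genuinely requires is a family whose ``stable isotopy depth'' is calibrated exactly: the knots must remain distinct after one positive and one negative stabilization, yet become isotopic after any additional stabilization of either sign. This is a delicate property that neither the Chekanov pair nor its connected sums are known to have (and for the Chekanov pair it provably fails, as above). The paper supplies it by starting with the $n$ Legendrian representatives of the twist knots $K_{-2n-1}$ with $(\tb,\rot)=(-3,0)$ from~\cite{EtnyreNgVertesi13} (which become isotopic after a single stabilization of either sign), passing to their $(2,1)$-cables, and then invoking~\cite{ChakrabortyEtnyreMin20pre} to conclude that after stabilizing each cable once positively and once negatively the knots are still pairwise distinct, while one more stabilization of either sign identifies them. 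Without an input of this kind your first paragraph cannot be repaired, even though your treatment of the two surgery regimes is essentially correct.
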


\begin{proof}
We first show that there exist $n$ pairwise non-isotopic Legendrian knots $L_1,\ldots, L_n$ in $(S^3,\xist)$ such that the contactomorphism type of $L_i(r)$ is independent of $i$, $1\leq i\leq n$, for every contact surgery coefficient $r>0$.
Let $L_1,\ldots, L_n$ be pairwise non-isotopic Legendrian knots in $(S^3,\xist)$ with the same classical invariants that are all stabilized once negatively and once positively. For instance, cables of the right-handed trefoil provide examples of such $L_i$, see~\cite{EtnyreLafountainTosun12}.
	
Since the $L_i$ are stabilized with both signs, it is known that $L_i(r)$ is overtwisted for all positive $r$~\cite{EtnyreKegelOnaranPre}. Since the $L_i$ also have the same classical invariants, the contact structures on the $L_i(r)$ have the same homotopical invariants as $2$-plane fields and thus are isotopic by the classification of overtwisted contact structures~\cite{Eliashberg89}.
	
We now show there exist $n$ pairwise non-isotopic Legendrian knots $L'_1,\ldots, L'_n$ in $(S^3,\xist)$ such that the contactomorphism type of $L'_i(r)$ is independent of $i$, $1\leq i\leq n$, for every contact surgery coefficient $r<-1$. Let $L'_1,\ldots, L'_n$ be non-isotopic Legendrian knots in $(S^3,\xist)$ with the same classical invariants that become isotopic after a single positive or after a single negative stabilization. Negative twists knots provide such examples, see~\cite{EtnyreNgVertesi13}. In~\cite{DingGeigesStipsicz04},  it is shown that any contact structure in $L'_i(r)$ for $r<-1$ can be expressed as a sequence of contact $(-1)$-surgeries along stabilizations of $L'_i$. Since the stabilizations of the $L'_i$ are Legendrian isotopic, it follows that the surgered contact manifolds are contactomorphic.

To create the knots in the theorem, we just need $L_1,\ldots, L_n$ that are stabilized both positively and negatively, and become the same after a single positive or a single negative stabilization. To find such knots notice that the twist knots $K_{-2n-1}$ from \cite{EtnyreNgVertesi13} have $n$ Legendrian representatives with $\tb=-3$ and $\rot=0$, they moreover become isotopic after a single positive or a single negative stabilization. Now take the $(2,1)$-cable of these knots to get $n$ Legendrian knots with the same invariants. If we stabilize each of these one time positive and one time negatively they are still distinct Legendrian knots, but after another stabilization of either sign they will become isotopic, see \cite{ChakrabortyEtnyreMin20pre}.
\end{proof}

With similar methods we observe that Theorem~\ref{thm:main} also implies the existence of an infinite family of Legendrian knots that all share the same contact $(+1)$-surgery, a phenomena that was only known for finitely many Legendrian knots before~\cite{Etnyre08}.

\begin{cor}
	There exist infinitely many pairwise non-isotopic Legendrian knots $L_n$ in $(S^3,\xist)$, $n\in\N_0$, such that their contact $(+1)$-surgeries are all contactomorphic, for any $n \in \N_0$. These knots $L_n$ all have $\tb=-1$ and $\rot=0$ or $\rot=2$.
\end{cor}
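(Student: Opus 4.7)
The plan is to produce the desired infinite family directly from the family $L_n$ in $(S^3,\xist)$ supplied by Theorem~\ref{thm:main}.(i), which satisfies $(\tb(L_n),\rot(L_n))=(1,0)$ and whose contact $(-1)$-surgeries are all contactomorphic. An immediate smooth consequence is that the topological $0$-surgeries on the underlying knot types of the $L_n$ all yield a common smooth 3-manifold $M$, independently of $n$, because contactomorphic contact 3-manifolds are in particular diffeomorphic.

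First, I would build the $\rot=0$ family. Stabilize each $L_n$ once positively and once negatively to obtain a Legendrian knot $\widetilde L_n$ in $(S^3,\xist)$ with $(\tb,\rot)=(-1,0)$. Since the smooth knot types of the $L_n$ are pairwise distinct by Theorem~\ref{thm:infinitesurgeries}, so are those of the $\widetilde L_n$, and thus the $\widetilde L_n$ are pairwise Legendrian non-isotopic. Smoothly, $\widetilde L_n(+1)$ is $0$-surgery on the underlying knot type of $L_n$, which is $M$ independently of $n$.

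Second, I would show that the contact manifolds $\widetilde L_n(+1)$ are pairwise contactomorphic using Eliashberg's classification of overtwisted contact structures~\cite{Eliashberg89}. Since $\widetilde L_n$ admits both a positive and a negative stabilization, it is well known (cf.~\cite{EtnyreKegelOnaranPre}) that $\widetilde L_n(+1)$ is overtwisted. As the classical invariants of $\widetilde L_n$ are independent of $n$, Theorem~\ref{computInv} implies that the Euler class and the $d_3$-invariant of $\widetilde L_n(+1)$ are also independent of $n$. Eliashberg's classification then concludes that the $\widetilde L_n(+1)$ are all contactomorphic.

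For the $\rot=2$ family, I would repeat the construction by applying two stabilizations of the same sign to each $L_n$, producing a Legendrian knot with $(\tb,\rot)=(-1,2)$, and then verify the analogous overtwistedness and homotopical-invariant computation. The main obstacle I foresee is verifying overtwistedness of the $(+1)$-surgery in the same-sign case, since the standard overtwistedness criterion goes through mixed-sign stabilizations; I would handle this by appealing to the results of~\cite{EtnyreKegelOnaranPre} on positive contact surgeries along stabilized knots, or alternatively by rewriting the surgery diagram via Lemma~\ref{lem:contactHandleSlide} to introduce an auxiliary negative stabilization on a meridian before invoking the mixed-sign argument. The rest of the proof is then a direct parallel of the $\rot=0$ case.
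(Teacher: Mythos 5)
Your proposal is correct and takes essentially the same route as the paper: double-stabilize the knots from Theorem~\ref{thm:main}.(i) (mixed signs for $\rot=0$, same sign for $\rot=2$), note that the resulting contact $(+1)$-surgeries are smoothly the common $0$-surgery, and conclude via overtwistedness together with the homotopy invariants and Eliashberg's classification. The one obstacle you flag --- overtwistedness in the same-sign case --- is not actually an issue, since contact $(+1)$-surgery on \emph{any} stabilized Legendrian knot (of either sign) is overtwisted, which is precisely the fact from~\cite{EtnyreKegelOnaranPre} that the paper invokes.
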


\begin{proof}
	We define the Legendrian knots $L_n$ to be the two-fold stabilizations of the Legendrian knots in Theorem~\ref{thm:main}.(i) where we choose either all stabilizations of the same sign, or for every knot one positive and one negative stabilization. Since the underlying smooth knots types differ, it follows that the $L_n$ are pairwise non-isotopic. Their contact $(+1)$-surgeries agree smoothly with the Legendrian surgeries of the knots from Theorem~\ref{thm:main}.(i) and thus are all smoothly the same. Since the $L_n$ are all stabilized, the surgered contact manifolds are all overtwisted. Given that the $L_n$ have the same classical invariants, these contact structures are homotopic and thus, being overtwisted, also contactomorphic.
\end{proof}

\section{Questions and Conjectures}\label{sec:QuestionsConj}

The distinct Legendrian knots with contactomorphic surgeries, or symplectomorphic Stein traces, that we constructed are already distinguished by their underlying smooth knot type. Thus we ask the following natural question:

\begin{ques}\label{ques:classical}
	Do there exist non-isotopic Legendrian knots $L_0$ and $L_1$ in $(S^3,\xi_{st})$ with the same underlying smooth knot type, Thurston--Bennequin invariant and rotation number, such that their Stein traces $W_{L_0}$ and $W_{L_1}$ -- or their Legendrian surgeries $L_0(-1)$ and $L_1(-1)$ -- are equivalent?
\end{ques}

Note that the surgery exact triangle from~\cite{BourgeoisEkholmEliashberg12} implies that Legendrian knots $L_0$ and $L_1$ in $(S^3,\xi_{st})$ whose Legendrian Contact DGAs have distinct Hochschild homologies will have non-equivalent Stein traces. To eliminate this matter, a potential class of candidates is given by distinct Legendrian representatives of a given smooth knot, with the same $(\tb,\rot)$, which are stabilized. For instance, the twist knot $m(7_2)$ has two distinct stabilized Legendrian representatives with $(\tb,\rot)=(0,1)$ \cite[Theorem 1.1]{EtnyreNgVertesi13}. Nevertheless, it might be the case that all such knots have non-contactomorphic $(-1)$-surgeries if Conjecture~\ref{con:+and-} below is true. Note also that the analogue of Question \ref{ques:classical} in the higher-dimensional setting is known to have an affirmative answer, e.g.~see \cite[Corollary 1.22]{Lazarev19}, where the Stein manifold $(T^*S^n,\lambda_{st})$ is presented as a Stein trace of infinitely many Legendrian $(n-1)$-spheres in $(S^{2n-1},\xi_{st})$, for $n\geq3$, which are formally isotopic but not Legendrian isotopic.

\begin{con} \label{con:+and-}
	Let $L$ and $L'$ in $(S^3,\xi_{st})$ be two Legendrian knots. Suppose that $L(-1)$ is contactomorphic to $L'(-1)$, and $L(+1)$ is contactomorphic to $L'(+1)$. Then $L$ is Legendrian isotopic to $L'$ in $(S^3,\xi_{st})$.
\end{con}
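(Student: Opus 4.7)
The plan is to prove the conjecture by matching increasingly refined invariants, reducing the problem in three stages: classical invariants, smooth knot type, and finally Legendrian isotopy type. Throughout, I would denote by $K,K'\subset S^3$ the underlying smooth knot types of $L$ and $L'$.

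\textbf{Step 1 (Classical invariants):} As a first step, I would observe that the single contactomorphism $L(-1)\cong L'(-1)$ already forces $\tb(L)=\tb(L')=:t$ and $|\rot(L)|=|\rot(L')|=:r$, by the Gompf-type remark following Theorem~\ref{computInv}. (One can also see this directly by comparing the ranks of $H_1$ of the surgered manifolds and the computations of the Euler class and $d_3$-invariant in Theorem~\ref{computInv}, which bring in the second hypothesis $L(+1)\cong L'(+1)$ as a cross-check and extra constraint on $r$.) So $L$ and $L'$ share classical invariants from the outset.

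\textbf{Step 2 (Smooth knot type):} Next, I would upgrade the two contactomorphisms to two diffeomorphisms $K(t-1)\cong K'(t-1)$ and $K(t+1)\cong K'(t+1)$ and try to deduce $K\cong K'$. This is a purely smooth question: do two surgeries, with coefficients differing by $2$, together characterize a knot? For hyperbolic knots the answer is essentially yes, via Thurston's hyperbolic Dehn surgery theorem: for $|t\pm1|$ large enough, both surgered manifolds are hyperbolic with surgery cores realized as the shortest geodesics, and the knot complement can be reconstructed. For torus knots, the classification of Moser~\cite{Moser71} and known lens-space-yielding surgeries gives strong control. The case of satellite knots, and of small $|t\pm 1|$, is where the genuine difficulty lies.

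\textbf{Step 3 (Legendrian isotopy):} Assuming $K=K'$, I would compare the two Legendrian representatives of $K$ with matched $(\tb,\rot)$. When $K$ is Legendrian simple (unknot, torus knots, figure-eight, etc.), this step is immediate. When $K$ is not Legendrian simple -- e.g.\ the Etnyre--Ng--Vertesi twist knots \cite{EtnyreNgVertesi13} -- one needs to exploit the contact-topological content of the hypotheses. A natural tool is the Bourgeois--Ekholm--Eliashberg surgery exact triangle, which relates $L(-1)$ to the Legendrian contact DGA of $L$: if distinct representatives have distinguishable DGAs, then the contactomorphism $L(-1)\cong L'(-1)$ will already separate them. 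Combining this with the parallel information from $L(+1)$ is where the $(+1)$ hypothesis earns its keep.

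\textbf{Main obstacle:} The principal hurdle is Step~2, which in full generality is a smooth ``two-surgeries-characterize-the-knot'' problem that the literature on characterizing slopes \cite{Lackenby19,McCoy19,OzsvathSzabo19} only partially addresses. I would expect progress first in the restricted setting where $K$ is hyperbolic or Legendrian simple, since there both the smooth reduction and the Legendrian step are tractable. A more structural route, bypassing Step~2 altogether, would be to reconstruct the contact exterior $(S^3\setminus\nu L, \xi_{st}|_{S^3\setminus\nu L})$ directly from the pair $\big(L(-1),L(+1)\big)$ -- since the two surgered manifolds differ only by regluing the surgery solid torus along slopes that differ by $2$, one might hope that together they pin down both the contact knot complement and the contact framing, from which Legendrian isotopy follows. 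The difficulty there is that a contactomorphism $L(-1)\to L'(-1)$ need not carry the core of the surgery solid torus in $L(-1)$ to that in $L'(-1)$, so one must argue that, when combined with the analogous statement for $(+1)$, there is at most one pair of compatible surgery cores in the two manifolds. Confirming the conjecture for the specific families of Theorem~\ref{thm:main}, by checking that the contact annulus twists produce distinct $(+1)$-surgeries, would be a natural sanity-check and a useful first application.
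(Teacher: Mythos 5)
This statement is Conjecture~\ref{con:+and-} in the paper: the authors state it as an open problem and offer no proof, so there is nothing to compare your argument against. More importantly, your proposal is not a proof but a research outline, and you candidly flag its unresolved core yourself. Step~1 is fine (the Gompf-type remark after Theorem~\ref{computInv} does give $\tb(L)=\tb(L')$ and $|\rot(L)|=|\rot(L')|$ from the $(-1)$-surgery alone). But Step~2 asks whether two smooth surgery slopes differing by $2$ characterize a knot, which is open in general --- the literature on characterizing slopes \cite{Lackenby19,McCoy19,OzsvathSzabo19} handles single slopes under strong hypotheses (large denominator, hyperbolicity, specific knots), and nothing in the paper or in those references yields the needed two-slope statement for arbitrary $K$.

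The deeper problem is Step~3, and it fails precisely where the conjecture has content. The paper observes, immediately after stating the conjecture, that for two stabilized Legendrian representatives of the same smooth knot with equal $(\tb,\rot)$ the contact $(+1)$-surgeries are automatically overtwisted and hence contactomorphic (they share all homotopy invariants), so the $(+1)$ hypothesis is vacuous in that case and the conjecture reduces to a pure $(-1)$-surgery characterization of stabilized knots. Your plan leans on the Bourgeois--Ekholm--Eliashberg surgery triangle and on ``the $(+1)$ hypothesis earning its keep,'' but stabilized knots have trivial (non-augmentable) Legendrian contact DGAs, so the DGA gives no separation, and the $(+1)$-surgery gives no extra information either. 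The twist-knot examples of \cite{EtnyreNgVertesi13} that you cite are exactly of this type; the paper raises them as the first test case (Question~\ref{ques:classical}) rather than as something the known machinery resolves. So the proposal is a reasonable map of the terrain, but it does not close, and its two main steps are each open problems.
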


Note that this would imply that any two stabilized Legendrian representatives of the same smooth knot with equal $(\tb,\rot)$ are characterized by their contact $(-1)$-surgeries, as the $(+1)$-surgeries are overtwisted (and thus contactomorphic).

Regarding the results presented in Section \ref{sec:main_char}, the fact that the knots discussed in those results are smoothly characterized by surgeries on them~\cite{KronheimerMrowkaOzsvathSzabo07, OzsvathSzabo19} indicates the following natural conjecture:

\begin{con}\label{con:torus}
	The slope $\pm1$ is a characterizing contact slope for any Legendrian figure eight knot or torus knot $($including the unknot$)$.
\end{con}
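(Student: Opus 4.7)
The plan is to follow the blueprint of Theorems~\ref{minus1char} and~\ref{1characterize}, extending their reach to every Legendrian realization of the listed knot types. Fix a Legendrian representative $L$ of the unknot, the figure eight knot, or a torus knot $T_{p,q}$, with classical invariants $(\tb(L),\rot(L))=(t,r)$, and let $L'$ in $(S^3,\xist)$ be another Legendrian knot such that $L(\pm 1)$ is contactomorphic to $L'(\pm 1)$. As before, comparing the first homologies of the two surgeries forces $t':=\tb(L')$ to be either $t$ or a ``dual'' value ($2-t$ in the $(-1)$-case, $-t-2$ in the $(+1)$-case). In the equal-$\tb$ case one argues as in Theorems~\ref{Steinchar}--\ref{1characterize}: smooth characterization identifies $L'$ with $L$ as smooth knots (known for the unknot, trefoils, and figure eight by~\cite{KronheimerMrowkaOzsvathSzabo07,OzsvathSzabo19}, and to be established for a general torus knot $T_{p,q}$ via Heegaard Floer $d$-invariants of the resulting Seifert fibered surgery), the Euler-class formula of Theorem~\ref{computInv} pins down $|\rot(L')|\equiv |r|\Mod{|t-1|}$, the Bennequin inequality bounds $|\rot|$ in terms of the Seifert genus, and Legendrian simplicity~\cite{EliashbergFraser98,EtnyreHonda01b} concludes.

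The substantive work is to eliminate the dual $\tb$ case, where the underlying smooth type of $L'$ is no longer identified a priori. Here the approach is filling-theoretic, generalizing the McDuff argument used for the unknot in the proof of Theorem~\ref{minus1char}. The contact surgery $L(-1)$ is bounded by two candidate simply connected Stein traces, $W_L$ and $W_{L'}$, whose second cohomologies are both $\Z$. Their intersection forms have orders $|t-1|$ and $|t'-1|$ respectively but opposite definiteness assignments in most configurations; one aims to rule out the dual case by exhibiting an incompatibility with the classification of minimal symplectic fillings of $L(\pm 1)$. For torus knots this reduces to the classification of Stein fillings of small Seifert fibered spaces (extending results of Lisca, Starkston, and others), while for the figure eight the relevant surgeries are mostly hyperbolic and one can instead leverage the Ozsv{\'a}th--Szab{\'o} contact invariant $c^+$ together with the $d_3$ and Euler-class constraint $(r')^2+r^2=6(1-t)$ derived in Theorem~\ref{minus1char}.

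The intermediate cases left open by Theorem~\ref{minus1char} (for instance $t=1-6k^2$ with $r=0$ and candidate $r'=6k$, or the left-handed trefoil with $\tb=-11,\rot=6$) should be ruled out by refining the invariant: the $\mathrm{Spin}^c$-refined $d_3$-invariants in individual components of the Euler class, or equivalently the $d$-invariants of the surgery in the $\mathrm{Spin}^c$ structure determined by the rotation, typically separate the two potential contact structures whenever the crude $d_3$ does not. The $(+1)$-surgery case is treated symmetrically, or deduced from the $(-1)$-case by applying the same arguments to the Legendrian mirror and using that overtwisted contact structures in $L(+1)$ (which occur whenever $L$ is stabilized) are already classified by their homotopical data~\cite{Eliashberg89}.

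The hard part will be the dual $\tb$ case for positive torus knots with large $\tb$. There the contact boundary is a small Seifert fibered space carrying many Stein fillings, and ruling out a second Stein presentation realized by a \emph{single} $2$-handle attached along an ``exotic'' $\tb'=2-t$ Legendrian knot requires a careful enumeration of fillings by a single $2$-handle and a determination of which Legendrian isotopy classes can serve as the attaching knot. A prerequisite for the torus-knot portion is a smooth characterizing-slope result for $T_{p,q}$ at integer slopes, which currently appears to be available only for the trefoils~\cite{OzsvathSzabo19}; establishing this for all torus knots is likely the principal obstacle to a complete resolution of the conjecture.
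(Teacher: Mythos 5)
The statement you are trying to prove is stated in the paper as a \emph{conjecture} (Conjecture~\ref{con:torus}); the paper gives no proof of it, and only establishes the special cases recorded in Theorems~\ref{minus1char} and~\ref{1characterize}. Your proposal is an accurate survey of how one might attack it, but it is a research plan rather than a proof: every step beyond what the paper already proves is left as something ``to be established,'' ``should be ruled out,'' or is ``likely the principal obstacle.'' Concretely, the gaps are these. First, the equal-$\tb$ branch for a general torus knot $T_{p,q}$ already requires a smooth characterizing-slope result at the relevant integer slopes, which (as you note) is only known for the unknot, the trefoils, and the figure eight; and even granting that, the Euler class only determines $\rot(L')$ modulo $|t-1|$, while for torus knots of Seifert genus greater than one the set of rotation numbers realized at a fixed $\tb$ can have diameter exceeding $|t-1|$, so Legendrian simplicity plus the congruence does not pin down $L'$ --- one would need the refined $\mathrm{Spin}^c$ data you only gesture at. Second, the dual-$\tb$ branch is where the paper's own methods stop: the McDuff filling argument works for lens spaces (unknot case), but for general torus knots the boundary is a small Seifert fibered space whose minimal symplectic fillings are not classified in the generality you need, and no enumeration of fillings realized by a single Weinstein $2$-handle is carried out.

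Third, the exceptional numerics that the paper explicitly flags as beyond its methods --- e.g.\ the left-handed trefoil or figure eight with $t=1-6k^2$, $r=0$ versus a candidate $r'=6k$ at $t'=2-t$ when the Euler classes happen to agree (the paper's remark after Theorem~\ref{minus1char} gives $\tb=-11$, $\rot=6$ as such a case), and the right-handed trefoil with $\tb=-2$, $\rot=3$ --- are not resolved by your proposal either; asserting that $\mathrm{Spin}^c$-refined $d$-invariants ``typically separate'' the two contact structures is a plausible heuristic, but no computation is given, and the authors themselves state that their argument is insufficient there. So the proposal correctly maps the terrain but does not close any of the open steps; the conjecture remains a conjecture.
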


We also recall that we proved many slopes of Legendrian unknots to be characterizing in Theorem~\ref{uknotchar}. On the other hand, we have identified in Example~\ref{ex:noncharunknot} slopes of Legendrian unknots that are not characterizing. This naturally yields the following question:

\begin{ques}\label{ques:clascharunknot}
What is the classification of the characterizing slopes of Legendrian unknots?
\end{ques}

We would also like to add the following question:

\begin{ques}
	Does any Legendrian knot $L$ in $(S^3,\xi_{st})$ have a $($infinitely many$)$ characterizing contact slope(s)?
\end{ques}

Finally, we briefly discuss the relation of Legendrian knots with the same Stein traces and closed embedded Lagrangian surfaces in Stein traces, as follows.

Consider an infinite family of Legendrian knots $L_n$ in $(S^3,\xi_{st})$ with Stein equivalent traces, and suppose that each of the knots $L_n$ bounds an embedded exact Lagrangian surface $\Sigma_n$ in $(D^4,\lambda_{st})$. Then the union of the Lagrangian core of the Weinstein $2$-handle and $\Sigma_n$ defines a closed embedded exact Lagrangian surface $\overline{\Sigma}_n$ in the Stein trace $W_{L_n}$.

\begin{ques} Let $L_n$ in $(S^3,\xi_{st})$ be an infinite family of Legendrian knots with equivalent Stein traces,  $n\in\N_0$, each knot $L_n$ bounding an embedded exact Lagrangian surface $\Sigma_n$ in $(D^4,\lambda_{st})$. Having identified the Stein traces $W_{L_n}$ with $W_{L_0}$, are the closed embedded exact Lagrangian surface $\overline{\Sigma}_n$ Hamiltonian isotopic in $W_{L_0}$?
\end{ques}

It might be that the answer depends on the chosen family $L_n$ in $(S^3,\xi_{st})$ of Legendrian knots. It would be interesting to have examples where the Lagrangian surface $\overline{\Sigma}_n$ yield infinitely many different Hamiltonian isotopy classes in the same Stein trace.

As a starting example, each of the knots in the infinity family $L_n$ in $(S^3,\xi_{st})$ of Legendrian knots obtained by considering the knots $L_0$ and $L_1$ in Figure~\ref{fig:primeex} and performing contact annulus twist bounds an exact Lagrangian (punctured) 2-torus $\Sigma_n=T_n$. We do not know whether the $\overline{\Sigma}_n$ are Hamiltonian isotopic for different $n\in\N_0$. 
%It can be proven that the fundamental group of the exterior $W\setminus \mathring {\nu T_n}$ is isomorphic to $\Z_2$ and thus the Alexander polynomials of all these embedded 2-tori $T_n$ agree.\footnote{In order to see this, we can use the algorithm from~\cite{GompfStipsicz99} to write down a Kirby diagram of $D^4\setminus{\nu \dot {T_n}}$ from which we calculate the fundamental group of $D^4\setminus{\nu \dot {T_n}}$ to be $\Z_2$. Now, adding the Weinstein $2$-handle to $D^4$ does not change the fundamental group, and similarly adding the $2$-dimensional $2$-handle to $\dot T_n$ to cap it off will correspond to adding a $3$-handle to $D^4\setminus{\nu \dot {T_n}}$, thus it will also not change the fundamental group.} 
It might be the case that these tori are all smoothly isotopic, and even Hamiltonian isotopic, but this remains to be explored.

\bibliographystyle{alpha}
\bibliography{references}
\end{document}